\numberwithin{equation}{section}
\newtheorem{theorem}{Theorem}[section]
\newtheorem{proposition}[theorem]{Proposition}
\newtheorem{lemma}[theorem]{Lemma}
\newtheorem{corollary}[theorem]{Corollary}
\theoremstyle{definition}
\newtheorem{definition}[theorem]{Definition}
\theoremstyle{remark}
\newtheorem{remark}[theorem]{Remark}
\newtheorem{remarks}[theorem]{Remarks}
\newcommand{\N}{{\mathbb N}}
\newcommand{\R}{{\mathbb R}}
\newcommand{\T}{{\mathbb T}}
\newcommand{\cC}{\mathcal C}
\newcommand{\cQ}{\mathcal Q}
\newcommand{\Qext}{\cQ_{\mathrm{ext}}}
\newcommand{\Qint}{\cQ_{\mathrm{int}}}
\newcommand{\cG}{\mathcal G}
\def\fR{\mathcal{R}}
\def\eps{\varepsilon}
\def\un{\mathbf{1}}
\newcommand\dt{{\frac{\mathrm d}{\mathrm dt}}}
\newcommand{\dd}{{\, \mathrm d}}
\DeclareMathOperator{\degkin}{deg_{\, kin}}
\DeclareMathOperator{\starkin}{\star_{\, kin}}
\begin{document}

\title[The Schauder estimate in kinetic theory and application]{The
  Schauder estimate in kinetic theory\\ with application to a toy
  nonlinear model}

\author[C. Imbert]{Cyril Imbert} \address[C.I.]{CNRS \& DMA, \'Ecole
  Normale Sup\'erieure,   45, Rue d'Ulm, 75005 Paris, France} \email{cyril.imbert@ens.fr}

\author[C. Mouhot]{Cl\'ement Mouhot}
\address[C.M.]{DPMMS, University
  of Cambridge, Wilberforce road, Cambridge CB3 0WA, UK}
\email{c.mouhot@dpmms.cam.ac.uk}

\thanks{The authors would like to thank L. Silvestre for fruitful
  discussions and detailed comments on the previous version of this
  work. This lead in particular to simplifying our use of two
  definitions of H\"older norms to only one.}

\date{\today}

\begin{abstract}
  This article is concerned with the Schauder estimate for linear
  kinetic Fokker-Planck equations with H\"older continuous
  coefficients. This equation has an hypoelliptic structure.  As an
  application of this Schauder estimate, we prove the global
  well-posedness of a toy nonlinear model in kinetic theory. This
  nonlinear model consists in a non-linear kinetic Fokker-Planck
  equation whose steady states are Maxwellian and whose diffusion in
  the velocity variable is proportional to the mass of the solution.
\end{abstract}

\maketitle

\tableofcontents

\section{Introduction}

\subsection{The Schauder estimate for linear kinetic Fokker-Planck
  equations}

The first part of this paper deals with Schauder estimate for linear
kinetic Fokker-Planck equations of the form
\begin{equation}
  \label{eq:linear}
  (\partial_t  + v \cdot \nabla_x) g =
  \sum_{1\le i,j\le d} a^{i,j} \partial^2_{v_iv_j} g
  + \sum_{1 \le i \le d}b^i \partial_{v_i} g + c g + S
  \quad \textit{ in } \R \times \R^d \times \R^d
\end{equation}
for some given function $S$ under the assumption that the diffusion
matrix $A= (a^{i,j}(t,x,v))_{i,j=1,\dots,d}$ satisfy a \emph{uniform
  ellipticity condition} for some $\lambda>0$:
\begin{equation}\label{eq:elliptic}
  \forall \, (t,x,v) \in \R \times \R^d \times \R^d, \
  \forall \, \xi \in \R^d, \quad
  \sum_{1\le i,j\le d} a^{i,j}(t,x,v) \xi_i \xi_j \ge \lambda |\xi|^2.
\end{equation}
The main result of this article is a \emph{Schauder estimate}, that is
to say an a priori estimate for classical solutions to
\eqref{eq:linear} controlling their second-order H\"older regularity
(in the sense of a ``kinetic order'' made precise below) by their
supremum norm, under the assumptions that the coefficients
$a^{i,j},b^i,c$ are also H\"older continuous. 
\begin{theorem}[The Schauder estimate]\label{thm:variable}
  Given $\alpha \in (0,1)$ and
  $a^{i,j},b^i,c \in C^\alpha(\R \times \R^d \times \R^d)$,
  $i,j=1,\dots,d$, satisfying \eqref{eq:elliptic} and a function
  $S \in C^\alpha(\R \times \R^d \times \R^d)$, any classical solution
  $g$ to \eqref{eq:linear} satisfies
  \begin{align}
    \label{eq:schauder-main}
    \left[ \partial_t g + v \cdot \nabla_x g
    \right]_{\cC^\alpha(\R \times \R^d \times \R^d)} + \left[ \nabla_v ^2 g
    \right]_{\cC^\alpha(\R \times \R^d \times \R^d)} \le C \left([S
      ]_{\cC^\alpha(\R \times \R^d \times \R^d)} +
      \|g\|_{L^\infty(\R \times \R^d \times \R^d)}\right)
  \end{align}
  where the constant $C$ depends on dimension $d$, the constant
  $\lambda$ from \eqref{eq:elliptic}, the exponent $\alpha$ and the
  $[\cdot]_{\cC^\alpha}$ semi-norm and $L^\infty$ norm of $a^{i,j}$ for
    $i,j=1,\dots,d$, $b^i$ for $i=1,\dots,d$, and $c$. The semi-norm
    $[\cdot]_{\cC^\alpha}$ is the standard H\"older semi-norm for the
    distance $\|(t,x,v)\|=|t|^{\frac12} + |x|^{\frac13} +|v|$.
\end{theorem}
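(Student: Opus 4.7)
The plan is to follow the classical perturbative Schauder strategy, adapted to the kinetic (Galilean) geometry encoded by the distance $\|(t,x,v)\| = |t|^{1/2}+|x|^{1/3}+|v|$ and the associated group law $(t,x,v)\cdot(s,y,w) = (t+s, x+y+sv, v+w)$ under which the transport operator $\partial_t + v\cdot \nabla_x$ and the model operator $\partial_t + v\cdot\nabla_x - \Delta_v$ are invariant. The first task is to set up a functional framework in which the $\cC^\alpha$ seminorms are compatible with this group and the scaling $(t,x,v) \mapsto (r^2 t, r^3 x, r v)$, and to introduce the appropriate notion of kinetic polynomial of degree $\le 2$ (constants, $v$, $x-tv$, $v\otimes v$, $t$, etc.) so that the statement $g \in \cC^{2+\alpha}$ at a base point $z_0$ is equivalent to the existence of a kinetic polynomial $P_{z_0}$ with $|g(z)-P_{z_0}(z)| \le C\, \|z_0^{-1}\cdot z\|^{2+\alpha}$ locally.

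The second step is to prove the Schauder estimate for the model constant-coefficient equation $(\partial_t + v\cdot\nabla_x)g = \Delta_v g + S$, working at, say, $z_0=0$ with $A=\mathrm{Id}$ after an affine change of variables using \eqref{eq:elliptic}. Here one exploits the explicit Kolmogorov fundamental solution, convolution with which turns $\cC^\alpha$ data into functions whose transport derivative $\partial_t + v\cdot\nabla_x$ and whose $\nabla_v^2$ are $\cC^\alpha$, with bounds in terms of $[S]_{\cC^\alpha}$ and $\|g\|_{L^\infty}$. Equivalently, one can prove a Liouville/approximation theorem: if $g$ solves the model equation in a unit kinetic cylinder with bounded source, then $g$ is approximated at $0$ by a kinetic polynomial of degree $\le 2$ up to an error $O(r^{2+\alpha})$ on the cylinder of radius $r$.

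The third, main, step is the perturbation/freezing argument: at a base point $z_0$, write the equation as $(\partial_t+v\cdot\nabla_x)g - \sum a^{ij}(z_0)\partial^2_{v_iv_j}g = \tilde S$ with
\[
\tilde S := S + \sum_{i,j}(a^{ij}-a^{ij}(z_0))\partial^2_{v_iv_j}g + \sum_i b^i \partial_{v_i} g + c g.
\]
After applying an ellipticity-induced linear change of variables, the model estimate of Step 2 gives a polynomial approximation of $g$ on small kinetic cylinders $Q_r(z_0)$, with error controlled by $r^{2+\alpha}[\tilde S]_{\cC^\alpha}$. One then estimates $[\tilde S]_{\cC^\alpha}$ on $Q_r(z_0)$ by the Hölder moduli of the coefficients (which produce a small prefactor $r^{\alpha}$ on the top-order term, since the coefficient oscillates by $O(r^\alpha)$) times the seminorm of $\nabla_v^2 g$ we are trying to bound, plus lower-order terms that are of strictly smaller kinetic order and thus harmless by interpolation. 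Iterating the polynomial approximation on a dyadic sequence of scales (Campanato-style) produces the $\cC^\alpha$ control of $\nabla_v^2 g$ and of the transport derivative $\partial_t g + v\cdot\nabla_x g$, the small prefactor allowing the absorption to close.

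The main obstacle I expect is the geometric bookkeeping in Step 3: ensuring that the Hölder seminorms really transform well under the kinetic group and the anisotropic scaling, that the ellipticity-induced change of variables does not deteriorate the kinetic distance, and that the polynomial approximations at different scales can be combined coherently through the non-commutative Galilean shifts. Lower-order terms and the $L^\infty$--$\cC^\alpha$ interpolation are routine once the top-order perturbation has been absorbed; the passage from the local to the global estimate \eqref{eq:schauder-main} on $\R\times\R^d\times\R^d$ follows by translation invariance of the framework.
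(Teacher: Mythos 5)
Your roadmap coincides, at the level of architecture, with the paper's: kinetic Hölder spaces defined through polynomials of kinetic degree $\le 2$, a model estimate for $(\partial_t+v\cdot\nabla_x)g=\Delta_v g+S$, reduction of a general constant matrix $A$ to the identity by the change of variables $g(t,Bx,Bv)$ with $B^2=A^{-1}$, and then freezing of coefficients with the $r^\alpha$ (resp. $\gamma^\alpha$) smallness on the top-order commutator and interpolation for the lower-order terms, followed by absorption. The differences in Step 3 (Campanato-type dyadic iteration versus the paper's near-maximizing cylinder plus cutoff and direct absorption) are cosmetic. The genuine gap is Step 2, which is where essentially all of the paper's work lies and which you assert rather than prove: the claim that convolution with the Kolmogorov kernel $\cG$ maps $\cC^\alpha$ sources to functions with $\cC^\alpha$ transport derivative and $\cC^\alpha$ velocity Hessian is precisely the constant-coefficient Schauder estimate, and in the hypoelliptic setting it is not routine. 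The difficulty is the transfer of regularity from the $v$-diffusion to the $t$ and $x$ directions: the kinetic Taylor polynomial has no linear term in $x$, and the $x$-regularity of order $(2+\alpha)/3$ must be produced by the structure of the equation, not by the scaling bookkeeping alone.

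The paper resolves this by Safonov's scheme: it first proves a gradient bound for the Kolmogorov equation (Proposition~\ref{prop:grad}) by Bernstein's method coupled with a hypocoercivity-type quadratic form mixing $\partial_{x_i}g$ and $\partial_{v_i}g$ (the term $\mathfrak c\,\zeta^3\,\partial_{x_i}g\,\partial_{v_i}g$), from which interior bounds on derivatives of all orders follow for source-free solutions (Corollary~\ref{cor:deriv}); it then decomposes $g-\mathcal T_{0}[g]=h_1+h_2$ via $\cG\starkin{}$, using only the crude bound $\|\cG\starkin\un_{Q_r}\|_{L^\infty}\lesssim r^2$ for $h_1$ and the interior derivative estimates plus the corrector $\mathcal T_0[h_2]$ for $h_2$, and closes with the choice of a large $K$. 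Your first alternative (direct potential-theoretic estimates on $\nabla_v^2\,\cG\starkin S$ and $(\partial_t+v\cdot\nabla_x)\,\cG\starkin S$) is a legitimate route known in the ultraparabolic literature, but it requires its own nontrivial cancellation and Hölder estimates for the singular kernel with respect to the non-commutative quasi-distance, which your proposal does not supply; your second alternative (a Liouville/approximation theorem for solutions in a unit cylinder) likewise needs interior derivative or compactness estimates that have to be established, exactly the role played by the paper's Bernstein--hypocoercivity gradient bound. As written, the proposal therefore assumes the one ingredient that is specific to the kinetic case; the remaining steps you describe are correct and standard once that ingredient is available. A minor additional caveat: the absorption arguments (both in your Step 3 and in the interpolation of $b,c$) require the quantity $[g]_{\cC_\ell^{2+\alpha}}$ to be a priori finite, which the paper handles by mollification/truncation before running the scheme.
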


\begin{remark}
  The left hand side can be understood as a H\"older regularity of
  (kinetic) order $2+\alpha$, according the specific definition of
  H\"older spaces $\cC_\ell^\beta$, $\beta \ge 0$, given in the next
  section (Definition~\ref{def:holder}). We compare this result to
  the classical Schauder estimate for parabolic equations in the next
  subsection.
\end{remark}
Such a Schauder estimate is typically used to reach well-posedness of
nonlinear equations after the derivation of H\"older estimates on
coefficients.  In order to illustrate this fact, we consider in the
second half of this paper the equation
\begin{equation}\label{eq:main}
  (\partial_t  + v \cdot \nabla_x) f
  = \rho[f] \nabla_v \cdot \left( \nabla_v f + v f \right)
\end{equation}
for an unknown $0 \le f =f(t,x,v)$, supplemented with the initial
condition $f (0,x,v)=f_{in}(x,v)$ in $\T^d \times \R^d$, where
$\rho[f] (t,x):= \int_v f(t,x,v) \dd v$ and $\T^d$ denotes the $d$-%
dimensional torus. We emphasize the fact that studying \eqref{eq:main}
with $x \in \T^d$ is equivalent to study it with $x \in \R^d$ with
periodic initial data. The known a priori estimates that are preserved
in time for this equation are $L^1(\T^d \times \R^d)$ and
$C_1 \mu \le f \le C_2 \mu$, where $\mu$ denotes the Gaussian
$(2\pi)^{-d/2} e^{-|v|^2/2}$. They are not sufficient to derive
uniqueness or boostrap higher regularity. The Schauder estimate from
Theorem~\ref{thm:variable}, together with the H\"older regularity
from~\cite{gimv} (see Theorem~\ref{t:gimv}), allows us to prove global
well-posedness of Eq.~\eqref{eq:main} in Sobolev spaces. In the
following statement $H^k(\T^d \times \R^d)$ denotes the standard
$L^2$-based Sobolev space.
\begin{theorem}[Global well-posedness for a toy nonlinear
  model]\label{theo:main-holder}
  Given two constants $0<C_1\le C_2$, let $f_{in}$ be such that
  $f_{in}/\sqrt{\mu} \in H^k(\T^d \times \R^{d})$ with $k > 2+d/2$ and
  satisfying $ C_1 \mu \le f_{in} \le C_2 \mu$. There then exists a
  unique global-in-time solution $f$ of \eqref{eq:main} in
  $(0,+\infty) \times \T^d \times \R^d$ satisfying
  $f(0,x,v) = f_{in} (x,v)$ everywhere in $\T^d \times \R^{d}$ and
  $f(t)/\sqrt{\mu} \in H^k(\T^d \times \R^{d})$ for all time $t>0$ and
  $C_1 \mu \le f \le C_2 \mu$ in
  $[0,+\infty) \times \T^d \times \R^d$.
\end{theorem}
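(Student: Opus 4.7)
The plan is to construct the global solution by combining a local-in-time existence result with uniform-in-time a priori estimates that preclude blow-up. The crucial structural feature is that the nonlinearity $\rho[f](t,x)=\int f(t,x,v)\,\mathrm dv$ enters only as a scalar multiplier of the entire Fokker-Planck operator, so conditionally on H\"older control of $\rho[f]$ the equation becomes a linear kinetic Fokker-Planck equation to which Theorem~\ref{thm:variable} applies.

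The first a priori step is to propagate the Gaussian sandwich $C_1\mu\le f\le C_2\mu$. For any constant $C>0$ a direct computation gives $\nabla_v\cdot(\nabla_v(C\mu)+vC\mu)=0$ and $(\partial_t+v\cdot\nabla_x)(C\mu)=0$, so $C\mu$ is a classical solution of \eqref{eq:main} regardless of the sign or size of $\rho[f]$. Viewing \eqref{eq:main} as a linear kinetic Fokker-Planck equation for $f$ with non-negative coefficient $\rho[f]$, the classical comparison principle applied to $f-C_1\mu$ and $C_2\mu-f$ (both of which decay at infinity in $v$) propagates the sandwich for all times. In particular $0<c_1\le\rho[f](t,x)\le c_2$ uniformly, which yields the uniform ellipticity required by \eqref{eq:elliptic}.

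I would next set up local-in-time existence by a Banach fixed point in the weighted Sobolev space $\{g : g/\sqrt{\mu}\in H^k(\T^d\times\R^d)\}$. Given a candidate $\tilde f$ over a small time interval $[0,\tau]$, freeze $\rho[\tilde f]$ and solve the resulting linear kinetic Fokker-Planck equation for $f$; the Sobolev exponent $k>2+d/2$ together with Sobolev embedding ensures that $\rho[\tilde f]$ is Lipschitz in $(t,x)$ and that the nonlinear map is a contraction for $\tau$ small enough, with the Gaussian sandwich propagated at each iteration. To extend globally one needs uniform control of the $H^k$ norm; the Gaussian bounds and ellipticity give, via a kinetic De~Giorgi/Moser-type regularisation for hypoelliptic Fokker-Planck equations with bounded measurable coefficients, an a priori $\cC^\alpha$ estimate on $f$ for some small $\alpha\in(0,1)$, and integrating in $v$ transfers this regularity to $\rho[f]\in\cC^\alpha(\R\times\R^d)$ in $(t,x)$.

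With H\"older coefficients in hand, Theorem~\ref{thm:variable} upgrades the solution to $\cC^{2+\alpha}$ in the kinetic sense. Combined with the Gaussian sandwich, this regularity allows standard energy estimates on $g=f/\sqrt{\mu}$ in $H^k$ to be closed globally by a Gr\"onwall inequality (the commutators between derivatives and $v\cdot\nabla_x$ being handled by the Fokker-Planck dissipation once the coefficients are smooth enough), while uniqueness follows from the same energy method applied to the difference of two solutions, using that $\|\rho[f_1]-\rho[f_2]\|$ is controlled by $\|f_1-f_2\|$. The main obstacle is producing the initial H\"older modulus of $\rho[f]$ needed to trigger the Schauder estimate: Theorem~\ref{thm:variable} requires H\"older coefficients as input, so one cannot invoke it directly from the Gaussian sandwich alone but must first pass through a kinetic De~Giorgi-type regularisation result — the standard bottleneck of H\"older-based nonlinear bootstrap arguments.
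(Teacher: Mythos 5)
Your proposal is correct and follows essentially the same route as the paper: propagation of the Gaussian sandwich by a maximum-principle argument (Lemma~\ref{l:gaussian}), the De~Giorgi-type local H\"older estimate of \cite{wz09,gimv} (Theorem~\ref{t:gimv}) to produce the initial H\"older modulus, the Schauder estimate to reach $\cC^{2+\alpha_0}_\ell$, local well-posedness in weighted Sobolev spaces by iteration/fixed point, and a Gr\"onwall continuation argument. The only point the paper makes explicit that you pass over quickly is that the H\"older and Schauder estimates must be applied in cylinders centred at $(t_0,x_0,v_0)$ so that the bounds inherit Gaussian decay in $v_0$ (Proposition~\ref{prop:local}, estimate \eqref{e:decay-grad}), which is precisely what makes the $v$-integral of $\|\nabla_v g\|_{L^\infty_x}^2$ in the $H^k$ energy estimate finite and closes the global Gr\"onwall bound.
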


\subsection{Schauder estimates for kinetic equations}

The Schauder estimate for solutions $g(t,v)$ to parabolic equations of
the form
\begin{eqnarray}
  \label{eq:parabolic}
  &  \displaystyle \partial_t g =
    \sum_{1\le i,j\le d} a^{i,j} \partial^2_{v_iv_j} g +
    \sum_{1 \le i \le d}b^i \partial_{v_i} g + c g + S \quad \textit{ in
    } \R \times \R^d \\[1ex]
  \nonumber
  \text{ takes the form } \qquad
  & [\partial_t g ]_{\cC^{\alpha}(\R \times \R^d)}
    + [D^2_v g]_{\cC^\alpha(\R \times \R^d)}  \le C
    \left( [S ]_{\cC^\alpha(\R \times  \R^d)} +    \|g\|_{L^\infty(\R
    \times  \R^d)} \right)
\end{eqnarray}
where $C^\alpha (\R \times \R^d)$ denotes the classical H\"older
space with respect to the parabolic distance
$\|(t,v)\|=|t|^{\frac12} + |v|$. This distance accounts for the
parabolic scaling $(t,v) \mapsto (r^2t,rv)$.

Because we work with kinetic Fokker-Planck equations, the usual
parabolic scaling is replaced with the kinetic scaling
$(t,x,v) \mapsto (r^2t,r^3x,rv)$. Moreover, parabolic equations of the
form \eqref{eq:parabolic} are translation invariant while kinetic
Fokker-Planck equations of the form \eqref{eq:linear} are translation
invariant in the space variable $x$ but not in the velocity one $v$;
the latter is replaced by the \emph{Galilean invariance}. As already
noticed for instance in \cite{polidoro1994,gimv}, these two facts
---kinetic scaling and Galilean invariance--- naturally require new
definitions for cylinders, order of polynomials and H\"older
continuity. We therefore define (see Definition~\ref{def:holder}) the
space $\cC^\beta_\ell$ to be the set of functions whose difference
with any polynomial of (kinetic) degree smaller than $\beta$ decays at
rate $r^\beta$ in a cylinder of radius $r>0$.
Following~\cite{imbert2018schauder}, one can define the \emph{kinetic
  degree} that follows the kinetic scaling, as
$2 (\mbox{degree in } t) + 3(\mbox{degree in } x) + (\mbox{degree in }
v)$. The subscript $\ell$ stands for ``left'' since the
transformations leaving the equation invariant are applied to the
left, see the next section.

There exists a well-developed literature of Schauder estimates for
ultraparabolic equations, e.g. \cite{polidoro1994,manfredini}, and
so-called Kolmogorov or H\"ormander type equations, see
e.g. \cite{dfp,bb,lunardi1997,radkevich2008} and references there in.
Some of these large classes of equations include the linear kinetic
Fokker-Planck equations of the form \eqref{eq:linear}. Moreover,
\eqref{eq:linear} is already considered in \cite{hs}. However in all
these works, either the choice of H\"older spaces is not appropriate
to the study of kinetic equations or the assumptions on the
coefficients are too strong or the estimate is too
weak. 

In \cite{hs}, the authors use the same natural H\"older spaces
$\cC_\ell^\alpha$ for $\alpha \in (0,1)$ but make other choices for
higher exponents $\alpha$. For instance, following the aforementioned
classical Schauder estimate for parabolic equations, the semi-norm
$[\cdot]_{2+\alpha,Q}$ in \cite{hs} equals the sum of
$[D^2u]_{\alpha,Q}$, $[\partial_t u]_{\alpha,Q}$ \emph{plus} an
additional semi-norm controlling $x$-variations. Such a choice can be
compared the equivalence of norms discussed in
Remark~\ref{rem:equiv}. The authors of \cite{hs} explain their choice
by the fact that the natural Schauder estimate (in the spirit of
Theorem~\ref{thm:variable}) only provides a regularity in the $x$
variable of order $(2+\alpha)/3 <1$ while they aim at reaching
complete smoothing by bootstrap. Note that such bootstrap can
nevertheless be achieved in our spaces $\cC_\ell^\beta$, but this
requires to work with difference derivatives in $x$ of fractional
order $(2+\alpha)/3$ each time the Schauder estimate is applied. Due
to the technical length of this argument when providing full details
and in order to keep this paper concise, we defer this higher
regularity bootstrap on the nonlinear model~\eqref{eq:main} to a
future study, and also note that such bootstrap techniques are being
also implemented for the Boltzmann equation in~\cite{bootstrap}.

We also remark that our choice of norms ---that measures regularity by
estimating the oscillations of a Taylor expansion remainder--- is
related to the proof of the Schauder estimate. Indeed, we adapt the
argument by Safonov \cite{safonov1984} in the parabolic case,
explained in Krylov's book \cite{krylov1996}. In the latter argument,
the oscillation of the remainder of the second-order Taylor expansion
of the solution is shown to decay at rate $r^{2+\alpha}$ in a cylinder
of radius $r$, and a corrector is introduced to the second-order
Taylor polynomial to account for the contribution of the source term
at large distance.  Compared with the parabolic argument
in~\cite{krylov1996}, the main conceptual difference is in the proof
of the \emph{gradient bound}, see Proposition~\ref{prop:grad}.  We
combine Bernstein's method, as in \cite{krylov1996}, with ideas and
techniques borrowed from the hypocoercivity
theory~\cite{villani2009memoir}.

\subsection{Motivation and background for the toy model}

Equation~\eqref{eq:main} describes the evolution of the probability
density function $f$ of particles.  The free transport translates the
fact that the variable $v$ is the velocity of the particle at position
$x$ at time $t$. The operator
$\rho [f] \nabla_v \cdot (\nabla_v f + vf)$ takes into account the
interaction between particles. The diffusion coefficient $\rho [f]$ is
proportional to the total mass of particles lying at $x$ at time $t$:
diffusion is strong in regions where local density is large and weak
in regions where local density is small.  The diffusion operator
$\nabla_v \cdot (\nabla_v f + vf)$ is chosen so that the unique steady
state of this equation is the Gaussian
$\mu(v) = (2\pi)^{-d/2} e^{-|v|^2/2}$.

Even if it has a substantially simpler structure, the nonlinear
kinetic equation~\eqref{eq:main} shares some similarities with the
equation derived by Landau in 1936 \cite{landau1936} as a diffusive
limit, in the regime when grazing collisions dominate, of the
(Maxwell)-Boltzmann equation discovered in 1867--1872
\cite{clerk1867dynamical,boltzmann1872weitere} for describing rarefied
gases. These latter equations are respectively referred to as the
Landau (or Landau-Coulomb) equation and the Boltzmann equation. Landau
derived this equation in order to describe plasmas made up of
electrons and ions interacting by Coulombian forces. The Boltzmann
collision operator can describe long-range interactions less singular
than the Coulomb interactions but does not make sense for the Coulomb
interaction. The Landau collision operator is of the form
$\nabla_v \cdot ( A_f \nabla_vf + \vec b_f f)$ with $A_f = A \star f$
and $\vec b_f = \vec b \star f$ ($\star$ stands for the convolution
with respect to the velocity variable) where $A$ is the matrix
$\frac{1}{|v|} (\mbox{Id} -\frac{v \otimes v}{|v|^2})$ and
$\vec b= - \nabla_v \cdot A$.  The collision operator in
equation~\eqref{eq:main} corresponds to the (much simpler) case where
coefficients are given by $A_f = \rho[f] \mbox{Id}$ and
$B_f = \rho[f] v$.

In the case of the Landau equation, both coefficients are defined by
integral quantities involving the solution. Our simplified toy
model~\eqref{eq:main} replaces these convolutions crudely by their
averages and neglects the issues of the various positive or negative
moments at large velocities. This explains the factor $\rho[f]$. Our
simplified toy model also shares the same Gaussian steady state as the
Landau collision operator.

This simplification respects the principle at the source of
nonlinearity in bilinearity collision operators: that the amount of
collisions at a point is related to the local density of particles.
Note that replacing $\rho[f]$ by another $v$-moment of the solution,
or even having different $v$-moments in front respectively of the
diffusion and drift terms, could most likely be treated by variants of
the method developed in this paper. It is also likely that replacing
$\rho[f]$ by $F(\rho[f])$ where $F:\R_+^* \to \R_+^*$ is a smooth
nonlinear map could be treated by variants of the methods in this
paper.

The model~\eqref{eq:main} was also studied in~\cite{MR2252155} (see
equation (9) in~\cite{MR2252155}, when keeping only mass conservation)
and the authors show how its spatially homogeneous version arise as a
mean-field limit of an $N$-particle Markov process in the spirit of
Kac's process~\cite{MR0084985}. It is also related to the gallery of
nonlinear Fokker-Planck models discussed for instance
in~\cite{chavanis2008nonlinear}.

A recent line of research consists in extending methods from the
elliptic and parabolic theories to kinetic equations. Silvestre in
particular made key progresses on the Boltzmann equation without
cut-off in~\cite{MR3551261}, and together with the first author later
obtained local H\"older estimate in \cite{is} and a Schauder estimate
for a class of kinetic equations with integral fractional diffusion
in~\cite{imbert2018schauder}. In parallel, a similar program was
initiated for the Landau equation in~\cite{gimv}, following up from an
earlier result in~\cite{wz09}. The local H\"older estimate is obtained
in~\cite{wz09,gimv} for essentially bounded solutions, the Harnack
inequality is proved in~\cite{gimv} and some Schauder estimates are
derived in \cite{hs}. The results contained in the present article are
part of this emerging trend in kinetic theory.

\subsection{Strategy of proof for global well-posedness}

We explain here the various ingredients used in the proof of the
global well-posedness of equation~\eqref{eq:main}. The proof proceeds
in 5 steps.
\begin{enumerate}
\item First, the maximum principle implies that if the initial datum
  $f_{in}$ lies between $C_1 \mu$ and $C_2 \mu$, then the
  corresponding solution $f$ to~\eqref{eq:main} satisfies the same
  property: as long as the solution $f(t,x,v)$ is well-defined, we
  have $C_1 \mu (v) \le f(t,x,v) \le C_2 \mu (v)$ for all $(t,x,v)$
  (see Lemma~\ref{l:gaussian}). In particular, this ensures that the
  solution $f$ has fast decay at large velocities and that the
  diffusion coefficient $\rho [f]$ satisfies
  $C_1 \le \rho [f](t,x) \le C_2$ for all $(t,x)$. Therefore, the
  equation satisfies the uniform ellipticity condition in $v$ as
  stated in~\eqref{eq:elliptic}.
\item We deduce from the bound on $\rho [f]$ that the solution $f$
  satisfies an equation of the form
  \[
    (\partial_t + v \cdot \nabla_x ) f = \nabla_v \cdot (A \nabla_v f+
    \vec b f)
  \]
  for a symmetric real matrix $A$ whose eigenvalues all lie in
  $[C_1,C_2]$.  In particular, we can use the local H\"older estimate
  from \cite{wz09,gimv}, see Theorem~\ref{t:gimv} from
  Subsection~\ref{ss:holder}. The decay estimate from Step~1 and the
  H\"older regularity $\cC_\ell^{\alpha_0}$ for some small $\alpha_0$
  are then combined with the Schauder estimate from
  Theorem~\ref{thm:variable} to derive a \emph{higher-order} H\"older
  estimate in $C^{2+\alpha_0}_\ell$ (see
  Proposition~\ref{prop:local}).
\item With such a higher order H\"older estimate at hand, we next
  study how Sobolev norms in $x$ and $v$ grow as time increases and we
  derive a \emph{continuation criterion} (in the same spirit as the
  Beale-Kato-Majda blow-up criterion \cite{MR763762}). We prove 
  then that the blow-up is prevented by the $\cC_\ell^{2+\alpha_0}$
  H\"older estimate from Step~2. This finally yields global
  well-posedness of equation~\eqref{eq:main} in Sobolev spaces.
\end{enumerate}

It is worth mentioning that a conditional global smoothing effect for
the Landau equation with moderately soft potentials has been recently
obtained in \cite{hs} by combining the ingredients listed in Steps 1
to 3 above. Moreover, establishing such a global smoothing effect is
in progress for the Boltzmann equation without cut-off with moderately
soft potentials \cite{bootstrap}. These works however assume \emph{a
  priori} that some quantities such as mass, energy and entropy
densities remain under control along the flow. The \emph{a priori}
assumption is necessary to prove, among other things, that the
equations enjoy some uniform ellipticity and to establish good decay
in the velocity variable. The interest of the toy nonlinear model
\eqref{eq:main} lies in the fact that it is a nontrivial and
physically relevant model for which \emph{unconditional} global
well-posedness can be proven following such a programme. The main
simplification of our model compared with the Landau equation is the
lack of local conservation of momentum and energy; therefore the fluid
dynamics on the local density, momentum and energy fields reduces to
the heat flow in the fluid limit and avoids the difficulties of the
Euler and Navier-Stokes dynamics. The results of this paper hence
provide one more hint that the formation of singularities, if any, in
the Cauchy problem for non-linear kinetic equations is likely to come
from (1) fluid mechanics, or (2) issues with the decay at large
velocities.

\subsection{Perspectives}

We conclude the introduction by mentioning that the well-posedness
result for the toy nonlinear model can be improved in two
directions. First, more general initial data could be considered by
constructing solutions directly in our H\"older spaces rather than
mixing H\"older and Sobolev spaces. Second, we previously mentioned
that $C^\infty$ regularization is expected for positive times by
applying iteratively the Schauder estimates.

\subsection{Organisation of the article}

Section~\ref{sec:prelim} is devoted to the definition of H\"older
spaces.  The Schauder estimate from Theorem~\ref{thm:variable} is
proved in Section~\ref{sec:schauder}. We prove
Theorem~\ref{theo:main-holder} in the final Section~\ref{sec:toy} by
constructing local solutions to the non-linear equation
\eqref{eq:main} in Sobolev spaces and by using the Schauder estimate
to extend these solutions globally in time. 

\subsection{Notation} We collect here the main notations for the
convenience of the reader.

\subsubsection{Euclidian space and torus} The $d$-dimensional
Euclidian space is denoted by $\R^d$ and the $d$-dimensional torus by
$\T^d$.  Throughout this article, the space variable $x$ belongs to
$\R^d$, except in Section~\ref{sec:toy} where $x \in \T^d$.

\subsubsection{Multi-indices} The \emph{order} of $m \in \N^d$ is
$|m| := m_1+\dots+m_d$. Given a vector $x \in \R^d$ and $m \in \N^d$,
we denote $x^m := \prod_{i=1} ^d x_i^{m_i}$.

\subsubsection{Balls and cylinders} $B_r$ denotes the open ball of
$\R^d$ of radius $r$ centered at the origin. $Q_r(z_0)$ denotes a
cylinder in $\R \times \R^d \times \R^d$ centered at $z_0$ of radius
$r$ following the kinetic scaling, see~\eqref{eq:def-cyl}. $Q_r$
simply denotes $Q_r((0,0,0))$. The scaled variable is
$S_r (z):=(r^2t,r^3x,rv)$ for $z=(t,x,v)$,
see~\eqref{eq:scaling}. Radii of cylinders are denoted by $r$. Unless
further constraints are stated, this radius is an arbitrary positive
real number. It is sometimes restricted by the fact that a cylinder
should not leak out of the domain of study of the equation (in
particular in time), sometimes chosen to be $1$ or $2$, or
multiplied by a given constant, e.g. $3/2$ or $K+1$.

\subsubsection{Constants} We use the notation $g_1 \lesssim g_2$ when
there exists a constant $C > 0$ independent of the parameters of
interest such that $g_1 \leq C g_2$. We analogously write
$g_1 \gtrsim g_2$.  We sometimes use the notation
$g_1 \lesssim_{\delta} g_2$ if we want to emphasize that the implicit
constant depends on some parameter $\delta$.

\subsubsection{H\"older spaces and exponents} Given an open set $\cQ$,
$\cC_\ell^\alpha (\cQ)$ denotes the set of $\alpha$-H\"older
continuous functions in $\cQ$, see Definition~\ref{def:holder}. The
subscript $\ell$ refers to ``left'', it is not a parameter.  The
letter $\alpha$ denotes an arbitrary positive exponent.  This exponent
will be fixed to some value $\alpha_0$ in Section~\ref{sec:toy} after
applying the local H\"older estimate from \cite{wz09,gimv} recalled in
Subsubsection~\ref{ss:holder}.

\subsubsection{Kolmogorov operator} The Green function of the operator
$\mathcal L_K := \partial_t + v \cdot \nabla_x-\Delta_v$ is denoted by
$\mathcal{G}$.

\section{H\"older spaces}
\label{sec:prelim}

\subsection{Galilean invariance, scaling and cylinders}

Given $z := (t,x,v) \in \R \times \R^d \times \R^d$ and $r>0$ define
\begin{equation}\label{eq:scaling} 
 S_r( z) := (r^2 t,r^3 x, r v).
\end{equation}

The Galilean invariance of equations~\eqref{eq:linear} and
\eqref{eq:main} is expressed with the non-commutative product
\[
(t_1,x_1,v_1) \circ (t_2,x_2,v_2) := (t_1+t_2,x_1 +x_2 + t_2 v_1,v_1 + v_2)
\]
with inverse element denoted $z^{-1} := (-t,-x+tv,-v)$ for
$z:=(t,x,v)$. 

Given $z_0 \in \R \times \R^d \times \R^d$ and $r >0$, we define the
unit cylinder $Q_1:=Q_1(0)=(-1,0]\times B_1 \times B_1$, then
$Q_r:=Q_r(0) = S_r(Q_1)$ and finally
$Q_r (z_0) = \{ z_0 \circ z: z \in Q_r\}$. This results in
\begin{equation}\label{eq:def-cyl}
  Q_r (z_0) :=  \Big\{ (t,x,v) : t_0 - r^2 < t \le t_0, \, |x-x_0 -
  (t-t_0)v_0| < r^3, \, |v-v_0| < r \Big\}.
\end{equation}

\subsection{The Green function}

Consider the Kolmogorov equation
\begin{equation}\label{eq:kolmogorov}
(\partial_t  + v \cdot \nabla_x) g = \Delta_v g + S
\end{equation}
with a given source term $S$. 
The Green function $\cG$ of the operator
$\mathcal L_K:= \partial_t + v \cdot \nabla_x - \Delta_v$ was computed
by Kolmogorov~\cite{kolm}:
\begin{equation}\label{eq:green}
  \cG(z) = \begin{cases}
    \left(\frac{\sqrt{3}}{2 \pi t^2}\right)^d
    e^{-\frac{3\left|x+\frac{t}2 v\right|^2}{t^3}}  e^{-\frac{|v|^2}{4
        t}}
    & \text{ if } t > 0, \\
    0  & \text{ if } t \le 0.
\end{cases}
\end{equation}

\begin{proposition}[Properties of the Green function]
  \label{prop:pregreen}
  Given $S \in L^\infty(\R \times \R^d \times \R^d)$ with compact
  support in time, the function
  \begin{align*}
    g(t,x,v) & = \int_{\R \times \R^d \times \R^d}
               \cG(\tilde z^{-1} \circ z) S(\tilde z) \dd \tilde t \dd
               \tilde x \dd \tilde v \qquad
               \mbox{ (with } z:=(t,x,v) \mbox{ and } \tilde z :=
               (\tilde t,\tilde x,\tilde v) \mbox{)}\\
             & = \int_{\R \times \R^d \times \R^d} \cG(t-\tilde t,x- \tilde x
               - (t - \tilde t)\tilde v,v-\tilde v) S (\tilde t,\tilde x,\tilde
               v) \dd \tilde t \dd \tilde x \dd \tilde v
               =: (\cG \starkin S)(z)
  \end{align*}
  satisfies \eqref{eq:kolmogorov} in $\R \times \R^d \times \R^d$ (the
  ``kinetic'' convolution $\starkin$ follows the Galilean invariance).

  Moreover, for all
  $z_0 =(t_0,x_0,v_0) \in \R \times \R^d \times \R^d$ and $r >0$
  \[
    \left\|\cG \starkin \un_{Q_r(z_0)}\right\|_{L^\infty(Q_r(z_0))}
    \lesssim_d r^2.
  \]
\end{proposition}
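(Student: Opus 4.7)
I would handle the two claims separately, using a fundamental-solution plus Duhamel argument for the first, and a translation-and-scaling reduction for the second.

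\textbf{Step 1: the PDE $\mathcal L_K g = S$.} My first task is to observe that the operator $\mathcal L_K := \partial_t + v \cdot \nabla_x - \Delta_v$ is \emph{left-invariant} under the Galilean group $\circ$: a direct chain-rule computation using $\tilde z^{-1} \circ z = (t - \tilde t,\, x - \tilde x - (t-\tilde t)\tilde v,\, v - \tilde v)$ shows that the terms $-\tilde v \cdot \nabla_{x'}$ coming from $\partial_t$ and $+v \cdot \nabla_{x'}$ combine into $v' \cdot \nabla_{x'}$, so that $(\mathcal L_K \psi)(z) = (\mathcal L_K h)(\tilde z^{-1} \circ z)$ for $\psi(z) := h(\tilde z^{-1} \circ z)$. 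Combined with a direct verification on \eqref{eq:green} that $\mathcal L_K \cG = 0$ on $\{t>0\}$, together with the short-time convergence $\cG(t,\cdot,\cdot) \to \delta_{(0,0)}$ as $t \to 0^+$ (which follows from the normalization $\int_{\R^{2d}} \cG(t,x,v)\, dx\, dv = 1$ and the concentration of its support at the origin), this identifies $z \mapsto \cG(\tilde z^{-1} \circ z)$ as the fundamental solution of $\mathcal L_K$ with pole at $\tilde z$. The Duhamel representation $g = \cG \starkin S$ then yields $\mathcal L_K g = S$ by linearity, superposition, and Fubini; these are justified by the compact temporal support and boundedness of $S$ together with $\cG \in L^1_{\mathrm{loc}}$.

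\textbf{Step 2: the $L^\infty$ bound.} My second step uses three ingredients. First, the substitution $\tilde z \mapsto z_0 \circ \tilde z$, coupled with the identity $(z_0 \circ \tilde z)^{-1} \circ (z_0 \circ z) = \tilde z^{-1} \circ z$ and the invariance of Lebesgue measure under the left multiplication by $z_0$, reduces the estimate to the case $z_0 = 0$. Second, a direct computation on \eqref{eq:green} yields the scaling identity $\cG(S_r \zeta) = r^{-4d}\, \cG(\zeta)$, and one checks on the formulas that $S_r$ is a group homomorphism for $\circ$, so $S_r(\tilde z^{-1} \circ z) = (S_r \tilde z)^{-1} \circ (S_r z)$. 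The Jacobian of $S_r$ on $\R^{1+d+d}$ being $r^{4d+2}$, the substitutions $z = S_r \bar z$ and $\tilde z = S_r \bar{\tilde z}$ collapse the convolution to
\[
(\cG \starkin \un_{Q_r})(S_r \bar z) = r^{2}\, (\cG \starkin \un_{Q_1})(\bar z),
\]
so it suffices to bound $\cG \starkin \un_{Q_1}$ on $Q_1$ by a dimensional constant. Third, for $\bar z \in Q_1$ and fixed $\bar{\tilde t} \in (-1,\bar t)$, the inner integral $\int_{B_1 \times B_1} \cG(\bar{\tilde z}^{-1} \circ \bar z)\, d\bar{\tilde x}\, d\bar{\tilde v}$ is bounded by the total mass $\int_{\R^{2d}} \cG = 1$: the change of variable $y = \bar x - \bar{\tilde x} - (\bar t - \bar{\tilde t}) \bar{\tilde v}$ integrates $\bar{\tilde x}$ out into a Gaussian of unit mass in $\bar v - \bar{\tilde v}$, whose integral over $\bar{\tilde v}$ is again at most $1$. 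Integrating in $\bar{\tilde t} \in (-1,\bar t]$ then delivers the desired uniform bound.

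\textbf{Expected difficulty.} I anticipate no serious obstacle; the two steps are essentially bookkeeping. The only care needed is in tracking the non-commutative product $\circ$ through the successive changes of variable, and verifying that the Jacobian $r^{4d+2}$ of $S_r$ combines with the $r^{-4d}$ scaling of $\cG$ to yield precisely the advertised factor $r^2$, which is a pleasing consistency check of the kinetic scaling built into the cylinders $Q_r$.
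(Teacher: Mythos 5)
Your proof is correct and follows essentially the same route as the paper: the $L^\infty$ bound is obtained by reducing, via the left-invariance of Lebesgue measure under $\circ$ and the kinetic scaling $S_r$, to a computation over the unit cylinder (the paper performs the scaling step and then appeals to Krylov's Lemma 8.4.1 for the unit-cylinder bound, which you carry out explicitly by integrating the Gaussian factors in $\tilde x$ then $\tilde v$). The paper treats the identity $\mathcal L_K g = S$ as standard and does not prove it; your Duhamel and fundamental-solution argument, using the left-invariance of $\mathcal L_K$ and the short-time Dirac limit of $\cG$, correctly supplies the omitted justification.
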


\begin{proof} 
The argument of ~\cite[Lemma~8.4.1,~p.~115]{krylov1996} applies
similarly: given $z \in Q_r(z_0)$, calculate
\begin{align*}
  \cG \starkin \un_{Q_r(z_0)}(z)
  & = \int_{Q_r(z_0)} \cG(t-\tilde t,x- \tilde x
    - (t - \tilde t)\tilde v,v-\tilde v) \dd \tilde z \\
  & = r^2 \int_{Q_1(z_0)} \cG\left(\frac{t}{r^2}-\bar t,\frac{x}{r^3}- \bar x
    - \left(\frac{t}{r^2} - \bar t\right)\bar v,\frac{v}{r}-\bar
    v\right) \dd \bar z \\
  & = r^2 \cG \starkin \un_{Q_1(z_0)}\left(S_\frac1r(z)\right)
\end{align*}
which yields the result.
\end{proof}

\subsection{H\"older spaces}

We now introduce H\"older spaces similar to that
in~\cite{imbert2018schauder}. 
\begin{definition}[H\"older spaces]\label{def:holder}
  The \emph{kinetic degree} of a monomial
  $m (t,x,v) = t^{k_0} \Pi_{i=1}^d x_i^{k_i} v_i^{l_i}$ associated
  with the \emph{partial} degrees $k_0,k_1,\dots,k_d \in \N$ and
  $l_1,\dots,l_d \in \N$ is defined as
  \[
    \degkin m := 2 k_0 + 3 \left(\sum_{i=1}^d k_i\right) + \sum_{i=1}^d
    l_i.
  \]
  In particular, constants have zero kinetic degree.  The kinetic
  degree $\degkin p$ of a polynomial $p \in \mathbb{R}[t,x,v]$ is
  defined as the largest kinetic degree of the monomials $m_j$
  appearing in $p$.

  Given an open set $\cQ \subset \R \times \R^d \times \R^d$ and
  $\beta >0$, we say that a function $g: \cQ \to \R$ is
  $\beta$-H\"older continuous at a point $z_0 \in \cQ$ if there is a
  polynomial $p \in \R [t,x,v]$ with $\degkin p < \beta$ and a
  constant $C>0$ such that
\begin{equation}\label{e:p}
    \forall \ r >0, \quad \|g-
    p\|_{L^\infty(Q_r(z_0) \cap \cQ)} \le C r^{\beta}.
  \end{equation}
  If this property holds true for all $z_0 \in \cQ$, the function $g$
  is $\beta$-H\"older continuous in $\cQ$ and we write
  $g \in \cC_\ell^\beta (\cQ)$.  The smallest constant $C$ such that
  the property \eqref{e:p} holds true for all $z_0 \in \cQ$ is denoted
  by $[g]_{\cC^{\beta}(\cQ)}$.

  The $\cC_\ell^{\beta}$-norm of $g$ is then
  $\|g\|_{\cC_\ell^{\beta}(\cQ)} := \|g\|_{L^\infty(\cQ)} +
  [g]_{\cC_\ell^{\beta}(\cQ)}$.
\end{definition}

\begin{remarks}
  \begin{enumerate}
  \item For $\beta \in (0,1)$, the semi-norm $\cC^\beta_\ell$ is
    equivalent to the standard H\"older semi-norm $\cC^\beta$ for the
    distance $\|(t,x,v)\|=|t|^{\frac12} + |x|^{\frac13} + |v|$. 

  \item For a non-zero integer $k \in \N$, the spaces $C^k_\ell$
    differ from the usual $C^k$ spaces, in that the highest-order
    derivatives are not continuous but merely $L^\infty$. For instance
    $C^1_\ell$ functions are Lipschitz continuous in $v$ but not
    continuously differentiable in $v$.

  \item In \cite{imbert2018schauder}, a Schauder estimate is obtained
    for kinetic equations with an integral collision operator with
    fractional ellipticity in the velocity variable $v$. A parameter
    $s \in (0,1)$, related to the order of differentiation of the
    integral operator, plays a role in the definition of cylinders
    (through scaling) and, in turn, in the definition of kinetic
    degree and H\"older spaces. The present definition corresponds to
    the case $s=1$.

  \item The subscript $\ell$ emphasizes the fact that this definition
    is based on the non-commutative product $\circ$ defined above. The
    symbol $\ell$ stands for ``left'': because of Galilean invariance,
    the kinetic equations considered in \cite{imbert2018schauder} and
    the present work are left invariant with respect to this
    product. See \cite{imbert2018schauder} for further discussion.

  \item In \cite{imbert2018schauder}, H\"older spaces are defined by
    using a kinetic distance $d_\ell$. It is pointed out that this
    distance satisfies
    $\frac14 \|z_2^{-1} \circ z_1\| \le d_\ell (z_1,z_2) \le
    \|z_2^{-1} \circ z_1\|$ where
    $\|(t,x,v)\|=|t|^{\frac12} + |x|^{\frac13} + |v|$. This inequality
    justifies the fact that the Definition~\ref{def:holder} above
    coincides with~\cite[Definition~2.3]{imbert2018schauder} and that
    semi-norms only differ by a factor $4^\beta$.

  \item When $\cQ = \R \times \R^d \times \R^d$, we simply write
    $[\cdot]_{\cC_\ell^{\beta}}$, $\|\cdot\|_{\cC_\ell^{\beta}}$,
    $\|\cdot\|_{L^\infty}$, \textit{etc}.
  \end{enumerate}
\end{remarks}

\subsection{Second order Taylor expansion}

When $\beta =2+\alpha \in (2,3)$, we now prove that the polynomial $p$
realizing the infimum in the $\cC_\ell^\beta$-semi-norm is the Taylor
expansion of kinetic degree $2$:
\begin{equation}
\label{e:taylor}
\mathcal T_{z_0}[g](t,x,v) := g(z_0) + (t-t_0) \left[ \partial_t g + v_0
  \cdot \nabla_x g \right](z_0) + (v-v_0) \cdot \nabla_v g(z_0)
+ \frac12 (v-v_0)^T \cdot D^2 _v g(z_0) \cdot (v-v_0).
\end{equation}
Remark that the linear part in $x$ does not appear since it is of
kinetic degree $3$.

We recall and denote
\begin{align*}
  [g]_{\cC_\ell^{2+\alpha}(\cQ)}
  & = \sup \left\{   \frac{ \inf_{p \in \mathbb P}
    \|g-p \|_{L^\infty(Q_r(z_0) \cap \cQ)}}{r^{2+\alpha}} :
    z_0 \in \cQ, \ r >0  \right\} \\
  [g]_{\cC_{\ell,0}^{2+\alpha}(\cQ)}
  & := \sup \left\{\frac{\|g-\mathcal{T}_{z_0}[g]
    \|_{L^\infty(Q_r(z_0) \cap \cQ)}}{r^{2+\alpha}} :
    z_0 \in \cQ, \ r >0 \right\}
\end{align*}
where $\mathbb P$ denotes the set of polynomials of kinetic degree
smaller than or equal to $2$.
\begin{lemma}
  \label{lem:NN0}
  Given $\alpha \in (0,1)$, there exists $C\in (0,1)$ such that for
  all $g \in \cC_\ell^{2+\alpha}(\cQ)$, the derivatives in
  $\mathcal T_{z_0}[g]$ exist for all $z_0 \in \cQ$ and
  \[
    C [g]_{\cC_{\ell,0} ^{2+\alpha}(\cQ)} \le
    [g]_{\cC_\ell^{2+\alpha}(\cQ)} \le
    [g]_{\cC_{\ell,0}^{2+\alpha}(\cQ)}.
  \]
\end{lemma}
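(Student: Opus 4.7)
My plan is to prove the substantive inequality $C[g]_{\cC_{\ell,0}^{2+\alpha}(\cQ)} \le [g]_{\cC_\ell^{2+\alpha}(\cQ)}$ together with the existence of the derivatives appearing in $\mathcal T_{z_0}[g]$ via a dyadic Cauchy-sequence argument applied to near-minimisers of the infimum defining $[g]_{\cC_\ell^{2+\alpha}}$. The reverse inequality $[g]_{\cC_\ell^{2+\alpha}} \le [g]_{\cC_{\ell,0}^{2+\alpha}}$ is then immediate, since once the derivatives exist, $\mathcal T_{z_0}[g] \in \mathbb P$ is a valid competitor in the infimum.

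Fix $z_0 \in \cQ$ and set $M := [g]_{\cC_\ell^{2+\alpha}(\cQ)}$. For each admissible $r$, choose $p_r \in \mathbb P$ with $\|g - p_r\|_{L^\infty(Q_r(z_0))} \le 2 M r^{2+\alpha}$. Since $x_i$ has kinetic degree $3$, elements of $\mathbb P$ carry no $x$-monomial, so in the shifted coordinates $(\tau,y,w) = z_0^{-1}\circ z$,
\[
p_r(z_0\circ(\tau,y,w)) = \alpha_r + \gamma_r \tau + \beta_r \cdot w + w^T \Gamma_r w
\]
for scalars $\alpha_r,\gamma_r$, a vector $\beta_r$ and a symmetric matrix $\Gamma_r$. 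The triangle inequality gives $\|p_r - p_{r/2}\|_{L^\infty(Q_{r/2}(z_0))} \lesssim M r^{2+\alpha}$. The $L^\infty$-norm on a cylinder and the coefficient max-norm are equivalent on the finite-dimensional space of polynomials of kinetic degree $\le 2$; applying this equivalence to $p_r - p_{r/2}$, rescaled to the unit cylinder via $S_{r/2}$, and tracking how each kinetic monomial scales, yields
\[
|\alpha_r - \alpha_{r/2}| \lesssim M r^{2+\alpha},\quad |\beta_r - \beta_{r/2}| \lesssim M r^{1+\alpha},\quad |\gamma_r - \gamma_{r/2}| \lesssim M r^{\alpha},\quad \|\Gamma_r - \Gamma_{r/2}\| \lesssim M r^{\alpha}.
\]
Because $\alpha > 0$, iterating at the dyadic scales $r, r/2, r/4, \dots$ produces summable geometric series, so the four sequences are Cauchy with limits $(\alpha_*,\beta_*,\gamma_*,\Gamma_*)$ obeying the same bounds with $p_{r/2}$ replaced by the limit polynomial.

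Set $p_*(z_0\circ(\tau,y,w)) := \alpha_* + \gamma_*\tau + \beta_*\cdot w + w^T\Gamma_* w$. Combining the previous coefficient bounds with the elementary bounds $|\tau|<r^2$ and $|w|<r$ valid on $Q_r(z_0)$ yields $\|p_r - p_*\|_{L^\infty(Q_r(z_0))} \lesssim M r^{2+\alpha}$, hence $\|g - p_*\|_{L^\infty(Q_r(z_0))} \lesssim M r^{2+\alpha}$. It remains to identify $p_*$ with $\mathcal T_{z_0}[g]$. Evaluating at $z = z_0$ forces $\alpha_* = g(z_0)$; restricting to the characteristic $\tau \mapsto z_0\circ(\tau,0,0)$ and applying the same bound from cylinders also based at slightly shifted times $(t_0 + \varepsilon, x_0 + \varepsilon v_0, v_0)$ identifies $\gamma_*$ as the two-sided derivative at $\tau = 0$ of $\tau\mapsto g(t_0+\tau,x_0+\tau v_0,v_0)$, i.e. $\gamma_* = (\partial_t + v_0\cdot\nabla_x) g(z_0)$; restricting to $w\mapsto z_0\circ(0,0,w)$ and invoking the standard Peano--Taylor identification of the coefficients of a polynomial approximation of order $|w|^{2+\alpha}$ gives $\beta_* = \nabla_v g(z_0)$ and $\Gamma_* = \tfrac12 D_v^2 g(z_0)$. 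This both establishes existence of all derivatives appearing in \eqref{e:taylor} and shows $p_* = \mathcal T_{z_0}[g]$, whence $[g]_{\cC_{\ell,0}^{2+\alpha}(\cQ)} \lesssim M$.

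The main technical point is the scaled norm-equivalence for polynomials of kinetic degree $\le 2$: it is elementary linear algebra on a finite-dimensional space, but one must correctly track how each kinetic monomial transforms under the anisotropic scaling $S_r$ in order to attach the correct power of $r$ to each coefficient. A smaller subtlety is that $Q_r(z_0)$ is backward in time, so producing a genuine two-sided derivative along the free-transport direction requires using cylinders based at slightly later base points as above.
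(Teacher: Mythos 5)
Your proposal is correct and follows essentially the same route as the paper: near-minimising polynomials at dyadic scales, Cauchy estimates on their coefficients (which you extract via finite-dimensional norm equivalence under the anisotropic scaling $S_r$, where the paper tests at specific points of the cylinder -- a cosmetic difference), identification of the limit with $\mathcal T_{z_0}[g]$ yielding existence of the derivatives, and then the reverse inequality $[g]_{\cC_\ell^{2+\alpha}(\cQ)} \le [g]_{\cC_{\ell,0}^{2+\alpha}(\cQ)}$ for free since $\mathcal T_{z_0}[g]$ is an admissible competitor. Your extra care about the backward-in-time cylinders when identifying the transport derivative is a fine refinement but not a departure from the paper's argument.
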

\begin{proof}
  First reduce to $z_0=0$ by the change of variables
  $g^\sharp(z) := g(z_0 \circ z)$. We continue however to simply call
  the function $g$. We start by proving the first inequality. One
  needs to identify the minimizer $p \in \R[t,x,v]$ realising
  $\inf_{p \in\R[t,x,v]} \|g-p \|_{L^\infty(Q_r)}$ in the limit
  $r \to 0^+$. Let $\eps >0$ and consider $r_k = 2^{-k}$ and
  $p_k \in \mathbb P$ s.t.
  $\|g-p_k \|_{L^\infty(Q_{r_k})} \le
  r_k^{2+\alpha}([g]_{\cC_\ell^{2+\alpha}(\cQ)} + \eps)$.  Write
  $p_k(t,v) =: a_k + b_k t + q_k \cdot v + \frac12 v^T M_k v$. By
  subtraction one gets
  \begin{align*}
    \left\| p_{k+1}-p_{k} \right\|_{L^\infty(Q_{r_{k+1}})} \le
    2 r_k^{2+\alpha} \left( [g]_{\cC_\ell^{2+\alpha}(\cQ)}+\eps \right)
  \end{align*}
  which writes in terms of the coefficients
  \begin{align*}
    \left\| \left( a_{k}-a_{k+1}\right) + \left( b_{k} - b_{k+1}
    \right) t + \left( q_{k} - q_{k+1} \right) \cdot v + \frac12 v^T
    \left( M_{k} - M_{k+1} \right) v
    \right\|_{L^\infty(Q_{r_{k+1}})}  \le 
    2 r_{k}^{2+\alpha} \left( [g]_{\cC_\ell^{2+\alpha}(\cQ)} + \eps \right). 
  \end{align*}
  Testing for $t=0$ and $v=0$ gives
  $|a_{k}-a_{k+1}| \lesssim r_{k}^{2+\alpha}
  ([g]_{\cC_\ell^{2+\alpha}(\cQ)}+\eps)$.  Using the latter and
  testing for $v=0$ and $|t|=r_{k+1}^2$ gives
  $|b_{k} - b_{k+1}| \lesssim r_{k}^{\alpha}
  ([g]_{\cC_\ell^{2+\alpha}(\cQ)}+\eps)$.  Testing for $t=0$ and
  summing $v$ and $-v$ with $|v|=r_{k+1}$ in all directions gives
  $|M_{k} - M_{k+1}| \lesssim r_{k}^{\alpha} (
  [g]_{\cC_\ell^{2+\alpha}(\cQ)}+\eps)$.  Finally by difference and
  testing with $t=0$ and all directions of $|v|=r_k$, one gets
  $|q_{k} - q_{k+1}| \lesssim r_{k}^{1+\alpha}
  ([g]_{\cC_\ell^{2+\alpha}(\cQ)}+\eps)$.  This shows that the
  coefficients are converging with
  $|a_{k}-a_{\infty}| \lesssim r_{k}^{2+\alpha}
  ([g]_{\cC_\ell^{2+\alpha}(\cQ)}+\eps)$ and
  $|b_{k} - b_{\infty}| \lesssim r_{k}^{\alpha}
  ([g]_{\cC_\ell^{2+\alpha}(\cQ)}+\eps)$ and
  $|q_{k} - q_{\infty}| \lesssim r_{k}^{1+\alpha}
  ([g]_{\cC_\ell^{2+\alpha}(\cQ)}+\eps)$ and
  $|M_{k} - M_{\infty}| \lesssim r_{k}^{\alpha}
  ([g]_{\cC_\ell^{2+\alpha}(\cQ)}+\eps)$.  These convergences and
  estimates imply that
  $\| g - P_\infty \|_{L^\infty(\cQ_{r_k})} \lesssim r_{k}^{2+\alpha}
  ([g]_{\cC_\ell^{2+\alpha}(\cQ)}+\eps)$ which in turn implies that
  $a_\infty = g(0,0,0)$ and $b_\infty = \partial_t g(0,0,0)$ and
  $q_\infty = \nabla_v g(0,0,0)$ and $M_\infty = D^2 _v g(0,0,0)$ (and
  proves the existence of such derivatives). We thus proved that
  $\|g-\mathcal T_0[g] \|_{L^\infty(Q_{r_k})} \lesssim
  r_{k}^{2+\alpha} ([g]_{\cC_\ell^{2+\alpha}(\cQ)}+\eps)$ where the
  constant does not depend on $k$. This in turn implies that same
  inequality for any $r>0$, with a constant at most multiplied by $2$,
  which concludes the proof since $\eps$ is arbitrarily small.

  The proof of the second inequality
  $[g]_{\cC_\ell^{2+\alpha}(\cQ)} \le [g]_{\cC_{\ell,0}
    ^{2+\alpha}(\cQ)}$ then follows from the existence of the
  derivatives appearing in $\mathcal T_0[g]$ showed in the previous step, and
  the fact that $\mathcal T_{0}[g]$ is of kinetic degree strictly smaller than
  $2+\alpha$.
\end{proof}

\subsection{Interpolation inequalities}

Interpolation inequalities are needed in the proof of the Schauder
estimate.
\begin{lemma}[Interpolation inequalities]\label{l:interpol}
  Let $\Qint, \Qext$ be two cylinders of the form $Q_\rho(z_c)$ and
  $Q_R(z_c)$ with either $\rho < R$ or $\rho = R = +\infty$. There
  exist $C,\beta>0$ depending only on $d$, $\alpha \in (0,1)$ (and
  $R-\rho$ if $R$ is finite) such that for any $\eps \in (0,1)$ and
  any $g \in \cC_\ell^{2+\alpha}(\Qext)$, 
  \begin{align}
    \label{eq:interp1}
    [g]_{\cC^\alpha_\ell (\Qint)}
    & \le \eps [g]_{\cC_\ell^{2+\alpha}(\Qext)}
      + C \eps^{-\beta} \|g\|_{L^\infty(\Qext)} \\
    \label{eq:interp3}
    [\nabla_v g]_{\cC_\ell^{\alpha}(\Qint)}
    & \le \eps [g]_{\cC_\ell^{2+\alpha} (\Qext)}
      + C\eps^{-\beta} \|g\|_{L^\infty (\Qext)}\\
    \label{eq:interp4-bis}
    [ D^2 _v g ]_{\cC_\ell^\alpha(\R^{2d+1})}
    & \le  C [g]_{\cC_\ell^{2+\alpha}(\R^{2d+1})} \\
    \label{eq:interp5-bis}
    [ (\partial_t  + v \cdot \nabla_v) g ]_{\cC_\ell^\alpha(\R^{2d+1})} 
    & \le   C  [g]_{\cC_\ell^{2+\alpha}(\R^{2d+1})}.
  \end{align}
  Note that the two last inequalities are only proved in the case of
  the whole space $\rho=R=\infty$.
\end{lemma}
\begin{remark}
  A similar result was also proved
  in~\cite[Lemma~2.7]{imbert2018schauder} by a different argument and
  in the terminology of \emph{kinetic degree}, see
  Definition~\ref{def:holder}. Denote
  $D=\mbox{Id},\nabla_v, D^2_v,(\partial_t + v \cdot \nabla_x)$ and
  define $k \in \{0,1,2\}$ the kinetic order of the differential
  operator $D$, i.e. $k=0$ for $D=\mbox{Id}$, $k=1$ for $D = \nabla_v$
  and $k=2$ for $D=D^2_v$ and $D=\partial_t + v \cdot \nabla_x$ (this
  is the kinetic degree of the polynomial naturally associated with
  the differential operator). Then given for $\beta \in \{0,\alpha\}$,
  and provided that $k+\beta < 2+\alpha$, we have
  \begin{align}\label{e:interpol-cyril}
    \|Dg\|_{\cC_\ell^\beta(\cQ)} \le \eps^{(2+\alpha)-(k+\beta)} [g]_{\cC_\ell^{2+\alpha}(\cQ)}
    + C\eps^{-(k+\beta)} \|g\|_{L^\infty(\cQ)}
  \end{align}
  where constant $C$ only depends on dimension $d$, $\alpha$ and
  $\|\cdot\|_{L^\infty(\cQ)}$. Observe that the restrictions imposed
  on $k,\beta$ yield the same inequalities as in
  Lemma~\ref{l:interpol}. The two key steps
  in~\cite[Lemma~2.7]{imbert2018schauder} are the proof of the general
  interpolation inequality
  $[g]_{\cC_\ell^{k+\beta}(\cQ)} \le
  \|g\|_{L^\infty(\cQ)}^{1-\frac{k+\beta}{2+\alpha}}
  [g]_{\cC_\ell^{2+\alpha}(\cQ)}^{\frac{k+\beta}{2+\alpha}} + C
  \|g\|_{L^\infty(\cQ)}$ and inequalities relating the H\"older
  semi-norms of derivatives of a given function to its (higher order)
  semi-norm, as in~\eqref{eq:interp4-bis}-\eqref{eq:interp5-bis} but
  more general.
\end{remark}
\begin{remark}
  It is possible to get rid of the condition $\rho < R$ when
  $R<+\infty$ but this requires substantial modifications. In the
  following proof, a global estimate ($\rho=R=+\infty$) is derived and
  a local ``interior'' one is easily obtained by a localization
  procedure. Since reaching $\rho =R<+\infty$ is irrelevant for this
  work and the proof below is different from the one contained in
  \cite{imbert2018schauder}, we believe it can be useful to restrict
  ourselves to this special case.
\end{remark}

\begin{proof}[Proof of Lemma~\ref{l:interpol}]
  It is sufficient to prove the interpolation
  inequalities~\eqref{eq:interp1}-\eqref{eq:interp3} in the case where
  $\Qint = \Qext = \R^{2d+1}$. Indeed, it is then sufficient to apply
  global estimates to $\tilde g = g \phi$ for some cut-off function
  $\phi$ such that $0 \le \phi \le 1$, $\phi =1$ in $\Qint$ and
  $\phi=0$ outside $\Qext$ to get the local ones. Indeed, in the case
  of \eqref{eq:interp1},
  \begin{align*}
    [g]_{\cC^\alpha_\ell (\Qint)}
    & \le [g \phi]_{\cC^\alpha_\ell (\R^{2d+1})}  \\
    & \le \eps [g \phi]_{\cC_\ell^{2+\alpha}(\R^{2d+1})}
      + C \eps^{-\beta} \|g \phi\|_{L^\infty(\R^{2d+1})} \\
    & \le \eps \bigg( [g]_{\cC_\ell^{2+\alpha}(\Qext)}
      + \|g\|_{L^\infty (\Qext)} [\phi]_{\cC_\ell^{2+\alpha}(\R^{2d+1})}\bigg)
      + C \eps^{-\beta} \|g\|_{L^\infty (\Qext)} \\
    & \le \eps [g]_{\cC_\ell^{2+\alpha}(\Qext)}
      + \tilde{C} \eps^{-\beta} \|g\|_{L^\infty(\Qext)}
  \end{align*}
  where in the case $0<\rho<R<+\infty$, the constant
  $\tilde{C} \sim (R-\rho)^{-2-\alpha}$ depends on the difference of
  radii. This localization is however not used in the proof
  of~\eqref{eq:interp4-bis}-\eqref{eq:interp5-bis} because it would
  add a term $\|g\|_{L^\infty(\Qext)}$ on the right hand side, which
  we want to avoid for the application of Remark~\ref{rem:equiv}.

  Since we now work in the whole space $\R^{2d+1}$, we do not specify
  the domain of functions spaces in the remainder of the proof.

  We next notice that a scaling argument allows us to only prove the
  inequalities with $\eps =1$: scaling then shows that
  $\beta = - \frac{\alpha}{2}$ in~\eqref{eq:interp1} and
  $\beta=-1-\alpha$ in~\eqref{eq:interp3}. \medskip

  \noindent {\bf Step~1: $L^\infty$ bounds on derivatives.} One can
  perform the same argument as in the proof of the preceding
  Lemma~\ref{lem:NN0} with $\eps = \| g \|_{L^\infty}$, $a_0=0$,
  $b_0=0$, $M_0=0$ as admissible first terms in the sequence to get
  \begin{align*}
    & |a_{k}-a_{\infty}| \lesssim r_{k}^{2+\alpha}
      \left([g]_{\cC_\ell^{2+\alpha}}+ \| g \|_{L^\infty}\right) \\ 
    & |b_{k} - b_{\infty}| \lesssim r_{k}^{\alpha}
      \left([g]_{\cC_\ell^{2+\alpha}}+ \| g \|_{L^\infty}\right) \\ 
    & |q_{k} - q_{\infty}| \lesssim r_{k}^{1+\alpha}
      \left([g]_{\cC_\ell^{2+\alpha}}+ \| g \|_{L^\infty} \right) \\
    & |M_{k} - M_{\infty}| \lesssim r_{k}^{\alpha}
      \left([g]_{\cC_\ell^{2+\alpha}}+ \| g \|_{L^\infty} \right).
  \end{align*}
  This yields 
  \begin{align*}
    |g(z_0)|, \ |\partial_t g(z_0)+v \cdot \nabla_x g(z_0)|, \
    |\nabla_v g(z_0)|, \ |\nabla_v^2 g(z_0)|
    \lesssim [g]_{\cC_\ell^{2+\alpha}}+ \| g \|_{L^\infty}
  \end{align*}
  for any $z_0 \in \Qext$, which yields
  \begin{align}
    \label{eq:interp2}
    \left\|\nabla_v g\right\|_{L^\infty}     & \lesssim  [g]_{\cC_\ell^{2+\alpha}}
                                                    +  \|g\|_{L^\infty} \\
    \label{eq:interp4}
    \left\| D^2 _v g \right\|_{L^\infty}     & \lesssim  [g]_{\cC_\ell^{2+\alpha}}      +  \|g\|_{L^\infty} \\
    \label{eq:interp5}
    \left\| (\partial_t + v \cdot \nabla_v) g \right\|_{L^\infty}     & \lesssim [g]_{\cC_\ell^{2+\alpha}} +  \|g\|_{L^\infty}.
\end{align}

\noindent  {\bf Step~2. Control of lower-order H\"older norms.}
Using the previous $L^\infty$ bounds and Lemma~\ref{lem:NN0},
  \begin{align*}
    \|g-g(z_0)\|_{L^\infty(Q_r(z_0) )}
    & \lesssim
      \|g-\mathcal T_{z_0}[g]\|_{L^\infty(Q_r(z_0))} \\
    + & 
      r \left( \left\| \partial_t g(z_0)+v \cdot \nabla_x g(z_0)
      \right\|_{L^\infty} + 
      \left\| \nabla_v g(z_0) \right\|_{L^\infty} + 
      \left\|\nabla_v^2 g(z_0)\right\|_{L^\infty} \right) \\
    & \lesssim r^{2+\alpha} [g]_{\cC^{2+\alpha}_\ell} + r \left(
      [g]_{\cC_\ell^{2+\alpha}}+ \| g \|_{L^\infty} \right) 
  \end{align*}
  which yields \eqref{eq:interp1} with $\eps=1$.
\medskip
 
\noindent {\bf Step~3. H\"older regularity of first-order derivatives
  in $v$.}  We are left with proving \eqref{eq:interp3}, that is to
say that for all $z_0 \in \R^{2d+1}$ and $z_1 \in Q_r (z_0)$, we have
\[
|\nabla_v g (z_1) - \nabla_v g (z_0) | \lesssim  r^\alpha \left( [g]_{\cC_\ell^{2+\alpha} } + \|g\|_{L^\infty }\right).
\]
In view of \eqref{eq:interp4}, it is enough to consider $r \in (0,1]$. 

Define for $z \in \R^{2d+1}$ and  $u \in \mathbb S^{d-1}$ and $r \in (0,1]$,
  \[
    \sigma_{r,u}^1[g](z) := \frac{1}{r} \left[ g(z \circ (0,0,ru)) -g(z) \right].
  \]
We make two observations about this function. First there exists
$\theta = \theta (z,r,u) \in (0,1)$ such that
\[
  \sigma_{r,u} ^1[g] (z) = \nabla_v g (z \circ (0,0,\theta r u)) \cdot u .
\]
Second, for $p = \mathcal T_{z_0} g(z)$, we have
\[
\sigma_{r,u}^1 [p](z_1) - \sigma_{r,u}^1 [p](z_0) = r u^T D^2_v g (z_0) (v_1-v_0).
\]

Let $z_0 \in \R^{2d+1}$, $z_1 \in Q_r (z_0)$ and
$u \in \mathbb S^{d-1}$ be fixed, and $\delta >0$ to be chosen later.
There then exists $\tilde z_0 \in Q_{\delta r} (z_0)$ and
$\tilde z_1 \in Q_{\delta r} (z_1)$ such that
\[
  \sigma_{\delta r,u} ^1 [g] (z_i) = \nabla_v g (\tilde z_i) \cdot u, \qquad \text{ for } i=0,1.
\]
Then we can write,  using Lemma~\ref{lem:NN0} to get the last inequality,
\begin{align*}
     \left| (\nabla_v g(z_1) - \nabla_v g(z_0) ) \cdot u\right|
   \le & \left| \nabla_v g(z_1)\cdot u - \sigma _{\delta r,u}^1 [g](z_1) \right| 
  + \left|  \sigma _{\delta r,u}[g](z_1) -  \sigma _{\delta r,u}^1 [p](z_1) \right| \\
  & + \left| \sigma_{\delta r,u}^1 [p](z_1) - \sigma_{\delta r,u}^1 [p](z_0) \right| \\
  & +\left| \sigma_{\delta r,u}^1 [p](z_0) - \sigma_{\delta r,u}^1  [g](z_0) \right| 
    +  \left| \nabla_v g(z_0) \cdot u -\sigma_{\delta r,u}^1 [g](z_0) \right| \\
   \le & \left| \nabla_v g(z_1) - \nabla_v g (\tilde z_1) \right|  +
      \left\| \nabla^2 _v g \right\|_{L^\infty} |v_1-v_0| + 
      \left| \nabla_v g(z_0) -\nabla_v g (\tilde z_0) \right| \\
      & + r^{-1} |(g-p) (z_0 \circ (0,0,\delta r u))| + r^{-1} |(g-p)
        (z_0)| \\ 
      & + r^{-1} |(g-p) (z_1 \circ (0,0,\delta r u))| + r^{-1} |(g-p) (z_1)| \\
      \le & 2 (\delta r)^\alpha  [\nabla_v g]_{\cC_\ell^\alpha} 
      +  r \left\| \nabla^2 _v g \right\|_{L^\infty} + 4 r^{1+\alpha} [g]_{\cC^{2+\alpha}_\ell } .
  \end{align*}
  Using the fact that $r \le 1$ and taking the supremum over $z_0,z_1, u$, we get,
\[
  [\nabla_v g]_{\cC_\ell^\alpha}     \le  2 \delta ^\alpha [\nabla_v g]_{\cC_\ell^\alpha} 
      + C \left( \left\| \nabla^2 _v g \right\|_{L^\infty}  +
        [g]_{\cC^{2+\alpha}_\ell } \right).
\]
We can now pick $\delta >0$ such that $2 \delta^\alpha = \frac12$ and
conclude that \eqref{eq:interp3} holds true.
\medskip

\noindent
{\bf Step~4. H\"older regularity of second-order derivative in $v$.}
We now prove~\eqref{eq:interp4-bis} following a similar strategy as in
the previous step: we prove for all $z_0 \in \R^{2d+1}$ and
$z_1 \in Q_r (z_0)$ with $r \in (0,1]$
\[
  \left| \nabla_v ^2 g (z_1) - \nabla_v^2 g (z_0) \right| \lesssim
  r^\alpha [g]_{\cC_\ell^{2+\alpha}}.
\]

Define for $z \in \R^{2d+1}$ and  $u \in \mathbb S^{d-1}$ and $r \in (0,1]$,
\[
  \sigma^2 _{r,u}[g](z) := \frac{1}{r^2} \left[ g(z \circ (0,0,ru)) +
    g(z \circ (0,0,-ru)) - 2 g(z) \right].
\]
This function satisfies for some $\theta = \theta (z,r,u) \in (-1,1)$
(and recalling $p=\mathcal T_{z_0}[g]$)
\[
  \sigma^2_{r,u} [g] (z) = u^T \nabla_v ^2 g (z \circ (0,0,\theta r
  u)) u   \quad \text{ and } \quad \sigma_{r,u}^2[p](z_1) =
  \sigma_{r,u}^2[p](z_0).
\]

Let $z_0 \in \R^{2d+1}$, $z_1 \in Q_r (z_0)$ and
$u \in \mathbb S^{d-1}$ be fixed, and $\delta >0$ to be chosen later.
There then exists $\tilde z_0 \in Q_{\delta r} (z_0)$ and
$\tilde z_1 \in Q_{\delta r} (z_1)$ such that
\[
\sigma_{\delta r,u}^2 [g] (z_i) = u^T \nabla_v ^2g (\tilde z_i)u, \qquad \text{ for } i=0,1.
\]
Then we can write,  using Lemma~\ref{lem:NN0} to get the last inequality,
\begin{align*}
   & \left| u^T \left( \nabla_v ^2 g(z_1) - \nabla_v ^2 g(z_0) \right)u\right|  
    \le  \left| u^T \nabla_v ^2 g(z_1)  u - \sigma _{\delta r,u}^2[g](z_1) \right| 
    + \left|  \sigma _{\delta r,u}^2 [g](z_1) -  \sigma _{\delta
    r,u}^2 [p](z_1) \right| \\
  & \qquad + \left| \sigma_{\delta r,u}^2 [p](z_1) - \sigma_{\delta r,u}^2[p](z_0) \right| 
   +\left| \sigma_{\delta r,u}^2[p](z_0) - \sigma_{\delta r,u}^2 [g](z_0) \right| 
    +  \left| u^T \nabla_v ^2 g(z_0) u -\sigma_{\delta r,u}^2 [g](z_0) \right| \\
  & \quad \le  \left| \nabla_v ^2 g(z_1) - \nabla_v ^2 g  (\tilde z_1) \right|  +
    + \left| \nabla_v ^2 g(z_0) -\nabla_v ^2 g  (\tilde z_0) \right| \\
  & \qquad + r^{-2} |(g-p) (z_0 \circ (0,0,\delta r u))| + r^{-2} |(g-p)
    (z_0 \circ (0,0,-\delta r u))| + 2 r^{-2} |(g-p) (z_0)| \\ 
  & \qquad  + r^{-2} |(g-p) (z_1 \circ (0,0,\delta r u))| + r^{-2} |(g-p)
    (z_1 \circ (0,0,-\delta r u))| + 2 r^{-2} |(g-p) (z_1)| \\
  & \quad \le  2 (\delta r)^\alpha  [\nabla_v ^2g]_{\cC_\ell^\alpha} 
    +  6 r^{\alpha} [g]_{\cC^{2+\alpha}_\ell } .
\end{align*}
Using the fact that $r \le 1$ and taking the supremum over
$z_0,z_1, u$, we get,
\[
  [\nabla_v ^2 g]_{\cC_\ell^\alpha} \le 2 \delta ^\alpha [\nabla_v ^2 
  g]_{\cC_\ell^\alpha} + C  [g]_{\cC^{2+\alpha}_\ell } 
\]
which proves the inequality by choosing $2 \delta^\alpha = \frac{1}{2}$.
\medskip

\medskip

\noindent {\bf Step~5. H\"older regularity of first-order transport
  derivative.}
The proof of~\eqref{eq:interp5-bis} follows the same strategy: we
prove, denoting $Y:=\partial_t + v \cdot \nabla_x$, for all
$z_0 \in \R^{2d+1}$ and $z_1 \in Q_r (z_0)$ with $r \in (0,1]$
\[
  \left| Y g (z_1) - Y g (z_0) \right| \lesssim
  r^\alpha [g]_{\cC_\ell^{2+\alpha}}.
\]

Define for $z \in \R^{2d+1}$ and $r \in (0,1]$,
\[
  \sigma^3 _{r}[g](z) := \frac{1}{r^2} \left[ g(z \circ (r^2,0,0)) -
    g(z) \right].
\]
This function satisfies for some $\theta = \theta (z,r) \in (0,1)$
\[
  \sigma^3_{r} [g] (z) = Y g (z \circ (\theta r^2,0,0)   \quad \text{ and } \quad \sigma_{r}^3[p](z_1) =
  \sigma_{r}^3[p](z_0).
\]

Let $z_0 \in \R^{2d+1}$, $z_1 \in Q_r (z_0)$, and $\delta >0$ to be
chosen later. Then for some $\tilde z_0 \in Q_{\delta r} (z_0)$, 
$\tilde z_1 \in Q_{\delta r} (z_1)$:
\[
  \sigma_{\delta r,u}^3 [g] (z_i) = Y g (\tilde z_i), \qquad \text{ for } i=0,1.
\]
Then we can write,  using Lemma~\ref{lem:NN0} to get the last inequality,
\begin{align*}
   \left| Y g(z_1) - Y g(z_0) \right| 
  & \le  \left| Y g(z_1)  - \sigma _{\delta r}^3[g](z_1) \right| 
    + \left|  \sigma _{\delta r}^3 [g](z_1) -  \sigma _{\delta
    r}^3 [p](z_1) \right| \\
  & \quad + \left| \sigma_{\delta r}^3 [p](z_1)
    - \sigma_{\delta r}^3[p](z_0) \right| 
   +\left| \sigma_{\delta r}^3[p](z_0)
    - \sigma_{\delta r}^3 [g](z_0) \right| 
    +  \left| Y g(z_0) -\sigma_{\delta r}^3 [g](z_0) \right| \\
  & \le  \left| Y g(z_1) - Y g (\tilde z_1) \right|  +
    \left| Y g(z_0) - Yg  (\tilde z_0) \right| \\
  & \quad + r^{-2} |(g-p) (z_0 \circ (\delta^2 r^2,0,0))| + r^{-2} |(g-p) (z_0)| \\ 
  & \quad  + r^{-2} |(g-p) (z_1 \circ (\delta^2 r^2,0,0))| + r^{-2} |(g-p) (z_1)| \\
  & \le  2 (\delta r)^\alpha  [Y g]_{\cC_\ell^\alpha} 
    + 4 r^{\alpha} [g]_{\cC^{2+\alpha}_\ell}.
\end{align*}
Using the fact that $r \le 1$ and taking the supremum over
$z_0,z_1, u$, we get,
\[
  [Y g]_{\cC_\ell^\alpha} \le 2 \delta ^\alpha [Yg]_{\cC_\ell^\alpha} + C  [g]_{\cC^{2+\alpha}_\ell } 
\]
which proves the inequality by choosing $2 \delta^\alpha = \frac{1}{2}$.
\medskip

  This achieves the proof of the lemma.
\end{proof}


We will see below that \eqref{eq:interp4-bis} and
\eqref{eq:interp5-bis} can be combined with the hypoelliptic estimate
contained in Lemma~\ref{l:hypo} below to derive an equivalent
semi-norm for the kinetic H\"older space $\cC_\ell^{2+\alpha}$,
see Remark~\ref{rem:equiv}.

\subsection{A hypoelliptic estimate}

We investigate here how to recover the fact that a given function $g$
lies in $\cC^{2+\alpha}_\ell$ only knowing that its free transport and
its velocity second order derivatives lie in $\cC_\ell^\alpha$.  We
remark that this fact is (almost) a consequence of the Schauder
estimate contained in Theorem~\ref{thm:variable} (as a matter of fact,
it is closer to the local version of this result, see
Theorem~\ref{thm:variable-loc}). However we do not need here to
control the $L^\infty$ norm of $g$. The proof of the following lemma
illustrates the hypoelliptic structure of the H\"older spaces
$\cC_\ell^\beta$ and is consequently of independent interest. 
\begin{lemma}[Hypoelliptic H\"older estimate]\label{l:hypo}
Let $Q$ be an arbitrary cylinder and $g \in \cC_\ell^{2+\alpha} (\R^{2d+1})$ then 
\begin{equation}
  \label{e:hypo-g}
  [g]_{\cC_\ell^{2+\alpha}(\R^{2d+1})}   \le C ( [(\partial_t + v \cdot \nabla_x) g ]_{\cC_\ell^{\alpha}(\R^{2d+1})}+  [D^2_v g]_{\cC_\ell^{\alpha}(\R^{2d+1})})
\end{equation}
for some constant $C$ only depending on $\alpha$. 
\end{lemma}
\begin{remark}\label{rem:equiv}
  By combining~\eqref{e:hypo-g} with \eqref{eq:interp4-bis} and
  \eqref{eq:interp5-bis}, we deduce that
  $[(\partial_t + v \cdot \nabla_x) g ]_{\cC_\ell^{\alpha}(\R^{2d+1})}
  + [D^2_v g]_{\cC_\ell^{\alpha}(\R^{2d+1})}$ is a semi-norm
  \emph{equivalent} to
  $[\cdot]_{\cC_\ell^{2+\alpha}(\R^{2d+1})}$. In
    order words, although the semi-norm
    $[\cdot]_{\cC_\ell^{2+\alpha}(\R^{2d+1})}$ is defined by measuring
    oscillations around higher-order polynomials, in the whole space
    it can be recovered with the more classical notion of H\"older
    regularity along the highest-order derivatives. We do not know if
    this equivalence is true in a domain, as in
    $[(\partial_t + v \cdot \nabla_x) g ]_{\cC_\ell^{\alpha}(\cQ)} +
    [D^2_v g]_{\cC_\ell ^\alpha (\cQ)} \sim
    [g]_{\cC_\ell^{2+\alpha}(\cQ)}$. 
\end{remark}

\begin{proof}
  It is convenient to write $Y$ for $(\partial_t + v \cdot
  \nabla_x)$. Moreover, since the domain is the whole space
  $\R^{2d+1}$, we do not specify the domain in function spaces. Let
  $z_0 \in \R^{2d+1}$ and $r>0$.  We consider in particular
  $h =(1,u,v) \in Q_1$ and we have $z_0 \circ S_r (h) \in Q_r(z_0)$.

  Proving \eqref{e:hypo-g} for variations along free streaming is
  easy: for all $z_0 \in \R^{2d+1}$ and $r>0$, we have
  \[
    g \left(z_0 \circ (r^2,0,0)\right) - g(z_0) - Yg (z_0) r^2 =
    r^2 \int_0^1 \left[ Y g\left(z_0 \circ (r^2 \theta,0,0)\right)
    -Yg(z_0) \right] \dd \theta 
  \]
  which implies
  \begin{equation}
    \label{eq:taylor-fs}
    \left|g \left(z_0 \circ (r^2,0,0)\right) - g(z_0)-Yg (z_0)
    r^2\right| \le [Yg]_{\cC_\ell^\alpha } r^{2+\alpha}. 
\end{equation}

  The proof of \eqref{e:hypo-g} for variations along the $v$ variable
  is also straightforward: for all $z_0 \in \R^{2d+1}$ and $r>0$,
  \begin{equation}
    \label{eq:taylor-v}
    \left|g(z_0 \circ (0,0,r v)) -g(z_0) - \nabla_v g(z_0)
    \cdot (rv) - \frac{r^2}2 v^TD^2_v g(z_0)v \right| 
    \le \left[D^2_v g\right]_{\cC_\ell^{\alpha}} r^{2+\alpha}. 
  \end{equation}
  
  We then prove \eqref{e:hypo-g} but only for variations along $x$: we
  prove that for all $z_0 \in \R^{2d+1}$ and $u \in B_1$ and $r>0$,
  \begin{equation}
    \label{e:holder-x}
    \left|g\left(z_0 \circ (0,r^3u,0)\right) -g(z_0) \right|
    \le C \left(\left[(\partial_t        + v \cdot \nabla_x) g \right]_{\cC_\ell^{\alpha}} 
      + \left[D_v^2 g\right]_{\cC_\ell^{\alpha}}\right) r^{2+\alpha}
  \end{equation}
  for some constant $C$ only depending on $\alpha$. 
  This is where a hypoelliptic argument is used: $\nabla_x$ is
  realized as the Lie bracket of $\nabla_v$ and $Y$, along
  trajectories.
  \[
    \xymatrixcolsep{7pc} \xymatrixrowsep{4pc} \xymatrix{ z_1
      \ar[r]^{\mbox{{\tiny forward along $v$}}} & z_2
      \ar[r]^{\mbox{{\tiny backward along transport}}}&
      z_3 \ar[d]^{\mbox{\tiny backward along $v$}}\\
      & z_0 \ar[lu]^{\mbox{{\tiny forward along $x$\qquad}}}& z_4
      \ar[l]^{\mbox{{\tiny forward along transport}}} }
  \]
  Let $z_1$ denote $z_0 \circ (0,r^3u,0)$ and
  $z_2 = z_1 \circ (0,0,ru)$ and $z_3 = z_2 \circ (-r^2,0,0)$ and
  $z_4 = z_3 \circ (0,0,-ru)$.  Remark that
  $z_0 = z_4 \circ (r^2,0,0)$. Notice that all points remain in
  $Q_r (z_0)$. We now write
  \begin{align*}
    g(z_0) - g(z_1)
    = & \left[ g\left(z_4 \circ (r^2,0,0)\right) -g (z_4) \right] +
    \left[ g(z_3 \circ (0,0,-ru)) -g(z_3) \right] 
  \\  & + \left[ g\left(z_2 \circ (-r^2,0,0)\right) -g(z_2) \right] -
    \left[ g(z_2\circ (0,0,-ru))-g(z_2)\right].
  \end{align*}
  We now use \eqref{eq:taylor-fs} with $z_0$ replaced successively by
  $z_2$ and $z_4$ and \eqref{eq:taylor-v} with $z_0$ replaced
  successively by $z_3$ and $z_2$, and we get, after summing the four
  resulting inequalities,
  \begin{align}
    \nonumber    |g(z_0) -g(z_1)| \le &  2 ([Yg]_{\cC^\alpha_\ell } + [D^2_v g]_{\cC^\alpha_\ell} ) r^{2+\alpha} + | Yg (z_4)-Yg(z_2)| r^2 \\
    \nonumber                         & + r | (\nabla_vg (z_2)-\nabla_v g (z_3)) \cdot u |
                           + \frac{r^2} 2 | D_v^2 g (z_3)-D_v^2 g(z_2)| \\
    \label{e:almost} |g(z_0) -g(z_1)| \le & 3([Yg]_{\cC^\alpha_\ell } + [D^2_v g]_{\cC^\alpha_\ell} ) r^{2+\alpha}+ r | (\nabla_vg (z_2)-\nabla_v g (z_3)) \cdot u |
  \end{align}
  We now estimate $| (\nabla_vg (z_2)-\nabla_v g (z_3)) \cdot u |$ in terms of
  \[
    [ g ]_{\cC^{2+\alpha}_{\ell,x} } := \sup_{z \in \R^{2d+1}, u \in B_1} \frac{|g(z \circ (0,0,ru)) - g(z)|}{r^{2+\alpha}}.
  \]
  We are going to prove that for all $R>0$, 
  \begin{equation}
    \label{e:dvx}
    | (\nabla_vg (z_2)-\nabla_v g (z_3)) \cdot u | \le  [ D^2_v g ]_{\cC^\alpha_\ell } (2R^{1+\alpha}+Rr^\alpha)  +  3 [Yg]_{\cC^\alpha_\ell } R^{-1} r^{2+\alpha}
    + [ g ]_{\cC^{2+\alpha}_{\ell,x} } R^{-1} (r^2 R)^{\frac{2+\alpha}3}.
  \end{equation}
  In order to derive this estimate, we approximate
  $\nabla_v g (z) \cdot u$ by the finite difference
  $\sigma^3_{R,u}[g](z)$ we already used above. Recall that
  \[ \sigma^3_{R,u} [g](z) = \frac{|g(z \circ (0,0,Ru)) - g(z)|}{R}.\]
  Remark that \eqref{eq:taylor-v} implies 
  \begin{equation}
    \label{e:1}
    |\sigma^3_{R,u} [g] (z) - \nabla_v g (z) \cdot u - \frac{R}2  D^2_v g (z) u \cdot u  | \le  [ D^2_v g ]_{\cC^\alpha_\ell } R^{1+\alpha} .
  \end{equation}
  Moreover, recalling that $z_3 = z_2 \circ (-r^2,0,0)$ and using \eqref{eq:taylor-fs} twice, 
  \begin{align}
 \nonumber    R |\sigma^3_{R,u} [g](z_2) - \sigma^3_{R,u} [g](z_3) | & =  | g(z_3) - g(z_2) - Yg (z_2) (-r^2) | \\
\nonumber                                                           &  + |g(z_2 \circ (0,0,Ru ) \circ (-r^2,0,0)) - g(z_2 \circ (0,0,Ru)) - Yg (z_2 \circ (0,0,Ru))(-r^2) | \\
 \nonumber                                                          & + r^2 |Yg (z_2) - Yg (z_2 \circ (0,0,Ru)| \\
 \nonumber                                                   & + | g (z_2 \circ (0,0,Ru) \circ (-r^2, 0,0) - g( z_3 \circ (0,0,Ru)|\\
    \nonumber                                           & \le 3 [Yg]_{\cC^\alpha_\ell } r^{2+\alpha} \\
    \nonumber                                   & + | g(t_2 -r^2,x_2 -r^2 (v_2 + Ru), v_2 + Ru) - g(t_2-r^2, x_2 -r^2 v_2, v_2 + Ru) |\\
    \label{e:2}                          & \le 3 [Yg]_{\cC^\alpha_\ell } r^{2+\alpha} + [ g ]_{\cC^{2+\alpha}_{\ell,x} } (r^2 R)^{\frac{2+\alpha}3}.
  \end{align}
Combining \eqref{e:1} and \eqref{e:2} yields \eqref{e:dvx}. 
  
We now combine \eqref{e:almost} and \eqref{e:dvx} and get
\begin{align*}
  r^{-2-\alpha}|g(z_0)-g(z_1)|  &\le  3([Yg]_{\cC^\alpha_\ell } + [D^2_v g]_{\cC^\alpha_\ell} ) +   [ D^2_v g ]_{\cC^\alpha_\ell } (2 (R/r)^{1+\alpha} + (R/r)) \\
  &+  3 [Yg]_{\cC^\alpha_\ell } (r/R)
    + [ g ]_{\cC^{2+\alpha}_{\ell,x} } (r/R)^{1-\alpha}.
\end{align*}
We now pick $R = \delta^{-1} r$ with $\delta^{1-\alpha} = 1/2$ and get
\eqref{e:holder-x} for some constant only depending on $\alpha$. This
concludes the proof.
\end{proof}

\section{The  Schauder estimate} 
\label{sec:schauder}

This section is devoted to the proof of the Schauder estimate
(Theorem~\ref{thm:variable}). The proof proceeds in mainly two steps:
in the first step, the matrix $A$ is constant. The estimate is first
obtained when $A$ is the identity matrix (Theorem~\ref{thm:constant}),
then for an arbitray constant diffusion matrix
(Corollary~\ref{cor:constant}).  Then the estimate is established for
variable coefficient by the procedure of freezing coefficients thanks
to interpolation inequalities.

\subsection{A gradient bound for the Kolmogorov equation}

We first establish a gradient bound for solutions of \eqref{eq:linear}
when $A$ is the identity matrix and when there is no lower order terms
($b=0$, $c=0$). The equation is then reduced to \eqref{eq:kolmogorov}.
Recall that $Q_1= (-1,0] \times B_1 \times B_1$ is the cylinder of
radius $1$.

\begin{proposition}[Gradient bound]\label{prop:grad}
  Given $S \in W^{1,\infty} (Q_1)$, consider $g$ solution to
  \eqref{eq:kolmogorov} in $Q_1$, i.e.
  \[
    (\partial_t + v \cdot \nabla_x )g = \Delta_v g + S .
  \]
  Then
  \[
    \forall \, i =1,\dots,d, \quad
    |\partial_{x_i} g(0,0,0)| + |\partial_{v_i} g(0,0,0)| \lesssim_{d}
    \|g\|_{L^\infty(Q_1)}+\|S\|_{L^\infty(Q_1)} +\|\partial_{x_i}
    S\|_{L^\infty(Q_1)}+\|\partial_{v_i} S\|_{L^\infty(Q_1)}.
  \]
\end{proposition}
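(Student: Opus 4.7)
The plan is a Bernstein-type pointwise gradient estimate: differentiate the equation, form a quadratic functional in the gradients with a carefully chosen cross term, and apply a localized maximum principle. The delicate point, and the reason hypocoercivity enters, is that the diffusion acts only on $v$, so the equation produces no direct dissipation on $\nabla_x g$; this dissipation must be manufactured from the commutator $[\partial_{v_i},\, v\cdot\nabla_x] = \partial_{x_i}$.

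First I would differentiate \eqref{eq:kolmogorov}. Writing $Y := \partial_t + v\cdot\nabla_x$, the commutations $[\partial_{x_i},Y]=0$ and $[\partial_{v_i},Y]=\partial_{x_i}$ give
\[
Y(\partial_{x_i} g) = \Delta_v (\partial_{x_i} g) + \partial_{x_i} S, \qquad Y(\partial_{v_i} g) + \partial_{x_i} g = \Delta_v (\partial_{v_i} g) + \partial_{v_i} S.
\]
Then I would introduce the hypocoercive quadratic functional
\[
w := M g^2 + A|\nabla_v g|^2 + B\,\nabla_v g \cdot \nabla_x g + C|\nabla_x g|^2,
\]
with positive constants $A,B,C,M$ satisfying $B^2<AC$ so that $w$ is positive definite in the gradients. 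A direct computation using the differentiated equations gives
\begin{align*}
(Y-\Delta_v)w &= -2M|\nabla_v g|^2 - 2A|D^2_v g|^2 - 2B\, D^2_v g : D_v\nabla_x g - 2C|D_v\nabla_x g|^2 \\
&\quad -2A\,\nabla_v g\cdot\nabla_x g - B|\nabla_x g|^2 + (\text{terms linear in } S,\nabla S).
\end{align*}
The crucial dissipative term $-B|\nabla_x g|^2$ arises from applying $Y-\Delta_v$ to the cross piece $\nabla_v g\cdot\nabla_x g$ via the commutator: this is exactly the hypocoercivity mechanism that restores the missing $x$-dissipation. Taking $B^2<AC$ makes the $2\times 2$ form in $(D^2_v g, D_v\nabla_x g)$ nonpositive; Young's inequality absorbs the spurious term $-2A\,\nabla_v g\cdot\nabla_x g$ into $-B|\nabla_x g|^2 - 2M|\nabla_v g|^2$ (provided $M$ is large compared to $A^2/B$), and the source terms are absorbed similarly. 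For a suitable choice of $A,B,C,M$ depending only on $d$, one obtains
\[
(Y-\Delta_v) w \;\le\; -\delta\bigl(|\nabla_v g|^2+|\nabla_x g|^2\bigr) + C_1 \bigl(\|g\|_{L^\infty(Q_1)}^2 + \|S\|_{W^{1,\infty}(Q_1)}^2\bigr).
\]

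Next I would localize with a cutoff $\chi\in C^\infty_c$ supported in $Q_1$, equal to $1$ in a neighborhood of the origin, and set $\Phi := \chi^2 w$. The additional contributions in $(Y-\Delta_v)\Phi$ from $(Y-\Delta_v)\chi^2$ and from the first-order cross piece $-4\chi\,\nabla_v\chi\cdot\nabla_v w$ are controlled by a multiple of $w$, after a small fraction of the second-derivative dissipation is spent via Cauchy--Schwarz. Hence $\Phi$ satisfies an inequality of the same form as $w$ wherever $\chi>0$. At an interior maximum of $\Phi$ on $Q_1$, the usual first-order conditions yield $(Y-\Delta_v)\Phi \ge 0$, whence
\[
\sup_{Q_1}\Phi \;\lesssim_d\; \|g\|_{L^\infty(Q_1)}^2 + \|S\|_{W^{1,\infty}(Q_1)}^2.
\]
Since $\chi(0,0,0)=1$ and $w\gtrsim |\nabla g|^2$ by positive definiteness, evaluating at the origin and taking square roots delivers the claimed bound on $|\partial_{x_i} g(0,0,0)|+|\partial_{v_i} g(0,0,0)|$.

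The main obstacle I expect is the tuning of $A,B,C,M$: one needs simultaneously (i) positive definiteness of $w$ in the gradients, (ii) non-positivity of the matrix quadratic form in the second derivatives (the constraint $B^2<AC$ handles both), and (iii) absorption of the spurious commutator term $-2A\,\nabla_v g\cdot\nabla_x g$ into the genuine dissipation $-B|\nabla_x g|^2 - 2M|\nabla_v g|^2$, without destroying any sign. A secondary technical point is the cutoff error $-4\chi\,\nabla_v\chi\cdot\nabla_v w$, which contains derivatives of $|\nabla_v g|^2$ and of $\nabla_v g\cdot\nabla_x g$; these are exactly the kind of second-derivative quantities dominated by the ``extra'' $-2A|D^2_v g|^2 -2C|D_v\nabla_x g|^2$ piece of the dissipation.
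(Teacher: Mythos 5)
Your interior computation is exactly the paper's mechanism: the commutator $[\partial_{v_i}, v\cdot\nabla_x]=\partial_{x_i}$ acting on the cross term manufactures the missing $-B|\nabla_x g|^2$ dissipation, and the constraints $B^2<AC$ (definiteness plus nonpositivity of the second-derivative block) and $M\gg A^2/B$ (absorption of $-2A\,\nabla_vg\cdot\nabla_xg$) are the right ones. The genuine gap is in the localization. In
\[
(Y-\Delta_v)(\chi^2 w)=\chi^2\,(Y-\Delta_v)w + w\,(Y-\Delta_v)(\chi^2) - 2\nabla_v(\chi^2)\cdot\nabla_v w,
\]
the error $w\,(Y-\Delta_v)(\chi^2)$ contains a term of size $|\nabla_x g|^2$ carrying at best one power of $\chi$, and the first-order piece, after Cauchy--Schwarz against the $\chi^2$-weighted second-derivative dissipation, leaves a remainder of order $|\nabla_v\chi|^2\bigl(|\nabla_xg|^2+|\nabla_vg|^2\bigr)$, again not weighted by $\chi^2$. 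Saying these are ``controlled by a multiple of $w$'' does not close the argument: $w$ is not controlled by the data, and the only dissipation available for $|\nabla_x g|^2$ is the hypocoercive one, which in your functional is itself multiplied by $\chi^2$. At a maximum point of $\Phi$ located where $\chi$ is small, $\chi\,|\nabla_xg|^2$ cannot be absorbed into $\delta\,\chi^2|\nabla_xg|^2$, and there is no unweighted substitute: the zeroth-order term only regenerates $|\nabla_vg|^2$, and in your setup even that dissipation is cut off because $Mg^2$ sits inside $\chi^2$. So the claimed bound $\sup\Phi\lesssim \|g\|^2_{L^\infty}+\|S\|^2_{W^{1,\infty}}$ does not follow from the inequality you have.

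This is precisely the point the paper's proof is built around: the cutoff is graded across the blocks of the quadratic form. It takes $w=\nu_0 g^2-\nu_1 t+\mathfrak a^2\zeta^4(\partial_{x_i}g)^2+\mathfrak c\,\zeta^3(\partial_{x_i}g)(\partial_{v_i}g)+\mathfrak b^2\zeta^2(\partial_{v_i}g)^2$ with $\zeta^{1/2}\in C^\infty$, hence $|\nabla\zeta|\lesssim\zeta^{1/2}$. Every occurrence of $\partial_{x_i}g$ carries at least the weight $\zeta^3$, so all cutoff errors involving $(\partial_{x_i}g)^2$ still carry $\zeta^3$ and are absorbed by the hypocoercive dissipation $\mathfrak c\,\zeta^3(\partial_{x_i}g)^2$ upon taking $\mathfrak c$ large; the unweighted $(\partial_{v_i}g)^2$ errors are absorbed by the unweighted dissipation $2\nu_0|\nabla_vg|^2$ produced by the $g^2$ term, which is deliberately kept outside the cutoff, and the $-\nu_1 t$ term soaks up the source contributions before the (parabolic) maximum principle is applied. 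To repair your argument you must implement the same device: move $Mg^2$ (and a $-\nu_1 t$ term) outside the cutoff and use weights $\chi^4,\chi^3,\chi^2$ (with $|\nabla\chi|\lesssim\chi^{1/2}$) on the $|\nabla_xg|^2$, cross and $|\nabla_vg|^2$ blocks respectively; with a single uniform $\chi^2$ weight the scheme cannot be closed.
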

\begin{remark}
  See also \cite{guillin2012,baudoin2017} for related gradient
  estimates.
\end{remark}
\begin{proof}
  We use Bernstein's method as Krylov does in \cite{krylov1996} in the
  elliptic-parabolic case, combined with methods from hypocoercivity
  theory (see for instance \cite{villani2009memoir}) in order to
  control the full $(x,v)$-gradient of the solution: see the
  construction of the quadratic form $w$ in $\partial_{x_i} g$ and
  $\partial_{v_i} g$ below.

  Denote the Kolmogorov operator
  $\mathcal L_K g := \partial_t g + v \cdot \nabla_x g - \Delta_v g$
  and compute the following defaults of distributivity of the operator
  (reminiscent of the so-called $\Gamma$-calculus~\cite{MR889476})
  \begin{equation}\label{eq:square}
    \mathcal L_K(g_1g_2) = g_1 \mathcal L_K g_2 + g_2 \mathcal L_K g_1
    - 2 \nabla_v g_1 \cdot \nabla_v g_2,\qquad \mathcal L_K (g^2)
    = 2 g \mathcal L_K g - 2 \left| \nabla_v g \right|^2. 
  \end{equation}
  
  Consider a cut-off function $0 \le \zeta \in C^\infty$ such that
  $\zeta^{1/2} \in C^\infty$, with support in
  $(-1,0] \times B_1 \times B_1$ and such that $\zeta(0,0,0)=1$. In
  order to estimate the gradient of $g$ in $x$ and $v$ at the origin,
  it is enough to find $\nu_0, \nu_1>0$ and
  $0 < \mathfrak a \le \mathfrak b$ and
  $0 < \mathfrak c < \mathfrak a \mathfrak b$ such that, for any
  $i \in \{1,\dots,d\}$,
  \[
    w = \nu_0 g^2 - \nu_1 t + \left[ \mathfrak a^2 \zeta^4
      (\partial_{x_i}g)^2 + \mathfrak c \zeta^3 (\partial_{x_i} g
      )(\partial_{v_i}g) + \mathfrak b^2 \zeta^2
      (\partial_{v_i}g)^2\right]
  \]
  satisfies 
  \[
    - \mathcal L_K w \ge 0.
  \]
  Indeed, the maximum principle for
  parabolic equations then implies that
  $\sup_{Q_1} w = \sup_{\partial_p Q_1} w$ where
  $\partial_p Q_1 = \{-1\} \times B_1 \times B_1 \cup [-1,0] \times
  S_1 \times S_1$ (parabolic boundary). Since $\zeta \equiv 0$ in
  $\partial_p Q_1$ and $\zeta (0,0,0) =1$, we get
  \begin{align}
    \nonumber
    & \mathfrak a^2 \left[ (\partial_{x_i}g)^2 +  (\partial_{v_i}g)^2
      \right](0,0,0) \le \left[ \mathfrak a^2 (\partial_{x_i}g)^2 +  \mathfrak b^2 (\partial_{v_i}g)^2
      \right](0,0,0)+ 2 \nu_0 g^2(0,0,0) \\ \nonumber
    & \le 2 \left[ \mathfrak a^2 (\partial_{x_i}g)^2 +
      \mathfrak c (\partial_{x_i} g \partial_{v_i}g) + \mathfrak b^2 (\partial_{v_i}g)^2
      \right](0,0,0) + 2 \nu_0 g^2(0,0,0) \\ \nonumber
    & 
      \le 2 \left[ \mathfrak a^2 \zeta^4(\partial_{x_i}g)^2 + \mathfrak c \zeta^3(\partial_{x_i} 
      g )( \partial_{v_i}g) + \mathfrak b^2 \zeta^2(\partial_{v_i}g)^2 \right](0,0,0) +
      2 \nu_0 g^2(0,0,0) \\
    & \le 2 w(0,0,0) \le 2 \left(\nu_0 \sup_{Q_1} g^2 + \nu_1\right).
      \label{eq:cs}
  \end{align}

  (I) Observe first that $-\mathcal L_K (-\nu_1t) = \nu_1$. 
  \smallskip
  
  (II) Compute second
  $-\mathcal L_K (\nu_0 g^2)$ using \eqref{eq:square}
  \begin{equation}\label{eq:mu}
    - \mathcal L_K(\nu_0 g^2) = 2\nu_0 |\nabla_v g|^2 - 2 S \nu_0 g.
  \end{equation}
\smallskip

  (III) Compute third $- \mathcal L_K(\zeta^4 (\partial_{x_i}g)^2)$ using
  that $\mathcal L_K(\partial_{x_i}g)=\partial_{x_i}S$ and
  \eqref{eq:square}
  \begin{align}\label{eq:zeta}
    - \mathcal L_K(\zeta^4(\partial_{x_i}g)^2)
    & =  2 \zeta^4 |\nabla_v
      \partial_{x_i}g|^2 - (\partial_{x_i}g)^2 \mathcal L_K (\zeta^4)
      + 2 \nabla_v (\zeta^4) \cdot
      \nabla_v (\partial_{x_i} g)^2 - 2 \zeta^4 \partial_{x_i}g \partial_{x_i}S\nonumber \\
    & \ge  \zeta^4 |\nabla_v
      \partial_{x_i}g|^2 +
      (\partial_{x_i}g)^2
      \left[ - \mathcal L_K (\zeta^4) -
      4 \zeta^{-4} |\nabla_v (\zeta^4)|^2 \right]
      - \zeta^3 (\partial_{x_i}g)^2 - \zeta^5 (\partial_{x_i}S)^2.
  \end{align}
\smallskip

  (IV) Compute fourth, for some $\epsilon_1>0$, the term
  $- \mathcal L_K(\zeta^2(\partial_{v_i}g)^2)$ using
  $\mathcal L_K (\partial_{v_i} g) = \partial_{v_i}S -
  \partial_{x_i}g$ and~\eqref{eq:square}:
  \begin{align}
    - \mathcal L_K(\zeta^2 (\partial_{v_i}g)^2)
    & = 2 \zeta^2 |\nabla_v \partial_{v_i} g|^2 + 2 \nabla_v
      (\zeta^2) \cdot \nabla_v  (\partial_{v_i} g)^2 -
      (\partial_{v_i} g)^2 \mathcal L_K  (\zeta^2) + 2 \zeta^2
      \partial_{v_i} g \partial_{x_i} g
      -2 \zeta^2 \partial_{v_i}g \partial_{v_i} S \nonumber \\
    & \ge \zeta^2
      |\nabla_v \partial_{v_i} g|^2 + (\partial_{v_i} g)^2 \left[- 1 -
      \epsilon_1 ^{-1} \zeta -
      \mathcal L_K (\zeta^2)  - 2 \zeta^{-2}  |\nabla_v \zeta|^2\right]
      -  \epsilon_1 \zeta^3 (\partial_{x_i}g)^2 - \zeta^4
      (\partial_{v_i} S)^2.
    \label{eq:barbarzeta}
  \end{align}
\smallskip

  (V) Compute fifth, for some $\epsilon_2>0$, the term
  $-\mathcal L_K [\zeta^3 (\partial_{x_i}g) (\partial_{v_i} g)]$, with
  the intermediate step
  \[
    - \mathcal L_K [(\partial_{x_i} g)( \partial_{v_i} g)] =
    (\partial_{x_i} g)^2 + 2 \nabla_v \partial_{x_i} g \cdot \nabla_v
    \partial_{v_i} g -\partial_{x_i}g \partial_{v_i}S - \partial_{v_i}
    g \partial_{x_i}S ,
  \]
  \begin{align}
    \nonumber  - \mathcal L_K [\zeta^3 (\partial_{x_i} g)(\partial_{v_i} g)]  =
    &\zeta^3  \left[ (\partial_{x_i}
      g)^2 + 2 \nabla_v \partial_{x_i} g \cdot \nabla_v \partial_{v_i} g
      \right]
      - (\partial_{x_i}g )(\partial_{v_i} g) \mathcal L_K (\zeta^3) \\
    &+ 2 \nabla_v (\zeta^3) \cdot \nabla_v [( \partial_{x_i}
      g)(\partial_{v_i}g)] \nonumber - \zeta^3 \partial_{x_i}g
      \partial_{v_i}S - \zeta^3 \partial_{v_i}g \partial_{x_i}S\nonumber \\
    \ge
    & \frac12 \zeta^3  (\partial_{x_i} g)^2  - \epsilon_2 \zeta^4 |\nabla_v
      \partial_{x_i}g|^2
      -  \epsilon_2^{-1} \zeta^2 |\nabla_v \partial_{v_i} g |^2 
      \label{eq:barzeta}  -(\partial_{x_i}g )(\partial_{v_i} g) \mathcal L_K
      (\zeta^3) \\
    &   + 2 \nabla_v (\zeta^3) \cdot \nabla_v [( \partial_{x_i}
      g)] (\partial_{v_i}g)
      + 2 \nabla_v (\zeta^3) \cdot \nabla_v [(\partial_{v_i}g)] ( \partial_{x_i}
      g) \nonumber \\
    & - \frac12 \zeta^3 (\partial_{v_i}S)^2
      - \frac12 \zeta^3 (\partial_{v_i}g)^2 - \frac12 \zeta^3
      (\partial_{x_i}S)^2.
      \nonumber
  \end{align}
\smallskip

  To clean a little the calculations, observe that
  \begin{itemize}
  \item[(1)] error terms involving $S$ are controlled by choosing
    $\nu_1$ larger enough, i.e. larger than a multiple of
    $\|S\|_{L^\infty(Q_1)} +\|\partial_{x_i}
    S\|_{L^\infty(Q_1)}+\|\partial_{v_i} S\|_{L^\infty(Q_1)}$,
  \item[(2)] terms involving $|\nabla_v g|^2$ or
    $\nabla_v g \cdot \nabla_x g$ are controlled by choosing
    $\nu_0$ large enough thanks to~\eqref{eq:mu}, with $g$ controlled
    by its sup norm, 
  \item[(3)] Equation~\eqref{eq:mu} is ``free'' (i.e. not
    involved in any constant dependency) as well as
    equation~\eqref{eq:barbarzeta} by choosing $\epsilon_1$ small
    enough so that the term $-\epsilon_1 \zeta^3 (\partial_{x_i} g)^3$
    is compensated by the positive term in~\eqref{eq:barzeta}, 
  \item[(4)] Equation~\eqref{eq:zeta} has an error term of the form
    $-O(1) \zeta^3 (\partial_{x_i} g)^2$ that is compensated by the
    positive term in~\eqref{eq:barzeta} again: we use
    $|- \mathcal L_K (\zeta^4) - 2 \zeta^{-4} |\nabla_v (\zeta^4)|^2 |
    \lesssim \zeta^3$ (remember that $\zeta^{\frac12} \in C^\infty$).    
  \item[(5)] In the last equation~\eqref{eq:barzeta} we also split
    \begin{align*}
      & - (\partial_{x_i}g )(\partial_{v_i} g) \mathcal L_K (\zeta^3)
      \lesssim \frac18 (\partial_{x_i} g)^2 + O(1) (\partial_{v_i}
      g)^2,\\
      & 2 \nabla_v (\zeta^3) \cdot \nabla_v [( \partial_{x_i} g)]
      (\partial_{v_i}g) \lesssim \epsilon_3 \zeta^4 |\nabla_v [(
      \partial_{x_i} g)]|^2 + \epsilon_3 ^{-1} O(1) (\partial_{v_i}g)
      ^2 \\
    & 2 \nabla_v (\zeta^3) \cdot \nabla_v [(\partial_{v_i}g)] (
    \partial_{x_i} g) \lesssim O(1) \zeta^2 |\nabla_v
    [(\partial_{v_i}g)] ( \partial_{x_i} g)|^2 + \frac{1}{4}
    \zeta^3(\partial_{x_i} g)^2,
    \end{align*}
     where we have used again $\zeta^{\frac12} \in C^\infty$, in the form
    $|\nabla \zeta| \lesssim \zeta^{1/2}$.
  \end{itemize}
  \medskip
  
  These considerations result in the following calculations:  
  \begin{align*}
      - \mathcal L_K w \ge 
    & 2 \nu_0 |\nabla_v g|^2 + \nu_1 + \mathfrak a^2 \zeta^4 |\nabla_v
      \partial_{x_i}g|^2 - \mathfrak a^2 O(1) \zeta^3 (\partial_{x_i}g)^2 - \mathfrak a^2
      O(1) (\partial_{x_i}S)^2 \\
    &             + \mathfrak b^2 \zeta^2
      |\nabla_v \partial_{v_i} g|^2 - \mathfrak b^2 O(1) (\partial_{v_i} g)^2 
      -  \mathfrak b^2 \epsilon_1 \zeta^3 (\partial_{x_i}g)^2 - \mathfrak b^2 O(1)  (\partial_{v_i}
      S)^2 \\
    &   + \frac{\mathfrak c}4 \zeta^3  (\partial_{x_i} g)^2  
    - \mathfrak c (\epsilon_2 +
      \epsilon_3) \zeta^4 |\nabla_v
      \partial_{x_i}g|^2 -  \mathfrak c O(1) \zeta^2 
      |\nabla_v \partial_{v_i} g |^2 
      - \mathfrak c O(1) (\partial_{x_i,v_i}S)^2 
      - \mathfrak c O(1)  (\partial_{v_i}g)^2.
  \end{align*}
  We finally choose
  \begin{enumerate}
  \item $\mathfrak a=1$,
  \item $\mathfrak c$ large enough so that the first term in the third
    line controls the fourth term in the first line,
  \item $\epsilon_2$ and $\epsilon_3$ small enough so that the second
    term of the third line is controlled by the second term in the
    right hand side of the first line,
  \item $\mathfrak b$ large enough so that the third term in the
    fourth line is controlled by the first term in the third line and
    $\mathfrak a \mathfrak b > \mathfrak c$ so that the quadratic form
    is strictly positive,
  \item $\epsilon_1$ small enough so that the third term in the third
    line is controlled by the first term in the third line,
  \item finally $\nu_0$ and $\nu_1$ large enough to control all the
    $v$-derivatives of $g$ and all the derivatives of $S$. Notice that
    $\nu_1 = O(1)( \| g \|_{L^\infty(Q_1)} + \|S\|_{L^\infty(Q_1)} ^2
    + \|\partial_{x_i} S\|_{L^\infty(Q_1)} ^2+ \|\partial_{v_i}
    S\|_{L^\infty(Q_1)} ^2)$.
  \end{enumerate}
  The conditions on the coefficients are:
  \begin{align*}
    \epsilon_2, \epsilon_3 <<1, \quad \mathfrak c >> 1, \quad
    \mathfrak b^2 >> \mathfrak c, \quad
    \epsilon_1 << \frac{\mathfrak c}{\mathfrak b^2}, \quad
    \nu_0, \nu_1 >> \mathfrak b^2, \mathfrak c
  \end{align*}
  which is compatible with all requirements and has solutions. This
  proves that $-\mathcal L_K \omega \ge 0$ and the desired inequality
  is thus obtained from \eqref{eq:cs} and the choice of $\nu_1$ we
  made above, which concludes the proof.
\end{proof}

A direct consequence of the gradient estimate from
Proposition~\ref{prop:grad} is a bound of derivatives of arbitrary
order in any cylinder of radius $r$. We recall that
$Q_r = (-r^2,0] \times B_{r^3} \times B_r$.
\begin{corollary}[Bounds on arbitrary derivatives around the
  origin]\label{cor:deriv}
  Given $k \in \N$ and $r>0$, there exists a constant $C$ depending on
  dimension $d$ and $k$ such that any solution of
  \eqref{eq:kolmogorov} in $Q_r$ with zero source term $S \equiv 0$,
  i.e. $(\partial_t + v \cdot \nabla_x) g = \Delta_v g$ in $Q_r$,
  satisfies for all $n \in \N$ and $\alpha,\beta \in \N^d$
  with $|\beta|=k$,
  \[
    |\partial_t^n D^\alpha_x D^\beta_v g(0,0,0)| \le \frac{C
      \|g\|_{L^\infty(Q_r)}}{r^{2n +3|\alpha|+|\beta|}}.
  \]
\end{corollary}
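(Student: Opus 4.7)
The plan is to reduce to $r=1$ by the kinetic scaling, extend Proposition~\ref{prop:grad} to an interior gradient estimate via Galilean invariance, and then iterate inductively on the total number of derivatives.

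\emph{Step 1: Scaling reduction.} First I would introduce $\tilde g(z) := g(S_r(z)) = g(r^2 t, r^3 x, rv)$. A direct computation using the homogeneity of the Kolmogorov operator under $S_r$ shows that $\mathcal L_K \tilde g = 0$ in $Q_1$ whenever $\mathcal L_K g = 0$ in $Q_r$, and $\|\tilde g\|_{L^\infty(Q_1)} = \|g\|_{L^\infty(Q_r)}$. The chain rule gives
\[
\partial_t^n D^\alpha_x D^\beta_v \tilde g(0,0,0) = r^{2n+3|\alpha|+|\beta|}\,\partial_t^n D^\alpha_x D^\beta_v g(0,0,0),
\]
so the weight $r^{-(2n+3|\alpha|+|\beta|)}$ in the claim is exactly the scaling factor and it is enough to prove the estimate for $r=1$.

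\emph{Step 2: Interior gradient estimate.} Next I would upgrade Proposition~\ref{prop:grad} to hold at any interior point. For $z_0$ and $\sigma\in(0,1]$ with $Q_\sigma(z_0)\subset Q_1$, set $\hat g(z) := g(z_0 \circ S_\sigma(z))$. By Galilean invariance combined with the previous scaling argument, $\mathcal L_K \hat g = 0$ in $Q_1$; applying Proposition~\ref{prop:grad} to $\hat g$ (with zero source) and unravelling the transformation gives
\[
|\partial_{x_i} g(z_0)| \lesssim \sigma^{-3}\|g\|_{L^\infty(Q_\sigma(z_0))}, \qquad
|\partial_{v_i} g(z_0)| \lesssim \sigma^{-1}\|g\|_{L^\infty(Q_\sigma(z_0))},
\]
while a bound on $\partial_t g(z_0)$ will follow from the equation $\partial_t g = \Delta_v g - v\cdot\nabla_x g$ once second-order bounds are obtained in Step~3.

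\emph{Step 3: Induction on derivative count.} I would then prove by induction on $N := n + |\alpha| + |\beta|$ that there exist constants $C_N$ and radii $\rho_N\in(1/2,1)$ with $\rho_N\downarrow 1/2$ such that every derivative of order $\le N$ is bounded by $C_N\|g\|_{L^\infty(Q_1)}$ on $Q_{\rho_N}$. The inductive step rests on the commutator identities
\[
[\mathcal L_K,\partial_t]=[\mathcal L_K,\partial_{x_i}]=0, \qquad [\mathcal L_K,\partial_{v_i}]=-\partial_{x_i}.
\]
Thus $\partial_t g$ and $\partial_{x_i} g$ again solve the homogeneous Kolmogorov equation, so Step~2 applies directly on a slightly smaller cylinder, whereas $\partial_{v_i} g$ solves $\mathcal L_K h = \partial_{x_i} g$, and here one invokes Proposition~\ref{prop:grad} \emph{with a non-zero source}, using that the proposition was stated precisely for $S\in W^{1,\infty}$ and that both $\|\partial_{x_i} g\|_{L^\infty}$ and its first-order derivatives are controlled on $Q_{\rho_{N-1}}$ by the previous induction step. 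Iterating produces in particular bounds on all derivatives at the origin, which combined with Step~1 yields the claim.

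The main obstacle is the commutator $[\mathcal L_K,\partial_{v_i}]=-\partial_{x_i}$: each additional $v$-differentiation in the induction injects an $x$-derivative source, and the full $W^{1,\infty}$-norm of this source (not only its $L^\infty$-norm) must be controlled by the previous induction step. This forces a careful organisation of the nested cylinders $Q_{\rho_N}$ and a polynomial growth of $C_N$ with $N$, but no new conceptual idea is needed beyond iterating the gradient bound.
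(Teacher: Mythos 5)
Your Steps 1 and 2 are fine and are exactly what the paper's proof relies on: the reduction to $r=1$ by the kinetic scaling, and the use of Proposition~\ref{prop:grad} at interior points via the Galilean translation (the paper leaves this interior version implicit, but it is needed, since the proposition is invoked with $L^\infty$ bounds on the source and its first derivatives over a whole cylinder). The commutation relations you list are also the right structural input.

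The gap is in Step 3: the induction on the total derivative count $N=n+|\alpha|+|\beta|$ does not close, and the sentence claiming that the $W^{1,\infty}$ norm of the injected source is ``controlled on $Q_{\rho_{N-1}}$ by the previous induction step'' is precisely where it breaks. Concretely, to bound $D_x^\alpha D_v^\beta g$ with $|\beta|=k$ you apply Proposition~\ref{prop:grad} to $h:=D_x^\alpha D_v^{\beta-\delta_i}g$, whose source $S$ is a combination of terms $D_x^{\alpha+\delta_j}D_v^{\beta-\delta_i-\delta_j}g$; the proposition requires $\|\partial_{x_j}S\|_{L^\infty}$ and $\|\partial_{v_j}S\|_{L^\infty}$, and these are derivatives of total order $N$ and $N+1$ respectively --- in particular $\partial_{v_j}S$ has the \emph{same} total order as the target derivative (one more $x$-derivative, one fewer $v$-derivative), so it is not covered by the hypothesis at level $N$. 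The same circularity appears for time derivatives: bounding $\partial_t u$ through $\partial_t u=\Delta_v u-v\cdot\nabla_x u$ consumes two extra $v$-derivatives of $u$, again above the current level. The paper avoids both problems by a different bookkeeping: first take $n=0$ and induct only on the number of $v$-derivatives $|\beta|$, proving the estimate \emph{for all} $x$-multi-indices $\alpha$ simultaneously at each stage (legitimate because $D_x^\alpha$ commutes with $\mathcal L_K$, so the base case $|\beta|\le 1$ for every $\alpha$ is Proposition~\ref{prop:grad} applied to $D_x^\alpha g$, and in the inductive step $S$, $\partial_x S$, $\partial_v S$ all involve at most $|\beta|-1$ or $|\beta|$ velocity derivatives, hence are already controlled); only afterwards are the cases $n\ge 1$ obtained by substituting the equation to convert time derivatives into space--velocity derivatives, all of which are by then available. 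Your argument can be repaired by adopting exactly this ordering (induction on $|\beta|$ with $\alpha$ arbitrary, time derivatives last), but as written the single induction on $N$ is circular rather than merely a matter of nested cylinders and growing constants.
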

\begin{remark}
  Remark that $2n+3|\alpha|+|\beta|$ is the kinetic degree of the
  polynomial associated with $\partial_t^n D^\alpha_x D^\beta_v$.
\end{remark}
\begin{proof}
  We reduce to the case $r=1$ by rescaling: the function
  $g_r (t,x,v) = g(r^2 t, r^3 x, r v)$ is a solution of
  \eqref{eq:kolmogorov} in $Q_1$. If the result is true for $r=1$,
  then we get the desired estimate for arbitrary $r$'s. We then first
  treat the case $n=0$ and argue by induction on $|\beta|$.
  Proposition~\ref{prop:grad} yields the result for $|\beta|\le 1$
  since $D^\alpha_x g$ solves \eqref{eq:kolmogorov} with $S \equiv 0$
  for an arbitrary multi-index $\alpha$. Assuming the result true for
  $n=0$, any $\alpha$ and $|\beta| \le k$, remark that
  $D^\alpha_x D_v ^\beta g$ solves \eqref{eq:kolmogorov} with
  $S =\sum_{i=1,\dots,d \, | \, \beta_i \ge 1} D^{\alpha+\delta_i} _x
  D^{\beta-\delta_i}_ v g$. Consider $\beta \in \N^d$ with
  $|\beta|=k+1$; the previous step yields controls of
  $\partial_{x_j} S$ and $\partial_{v_j} S$ for the source term $S$ in
  the equation for $D^\alpha_x D_v ^\beta
  g$. Proposition~\ref{prop:grad} then gives the control
  $\partial_{x_i,v_i} D^\alpha_x D_v ^\beta g(0,0,0)$ which completes
  the induction. We finally get the result for an arbitrary $n \ge 1$
  by remarking that the equation allows us to control any time
  derivatives by space and velocity derivatives.
\end{proof}

\subsection{Proof of Schauder estimates}

With such bounds on derivatives at hand, we can turn to the proof of
the Schauder estimate for the Kolmogorov equation, that is to say for
equation~\eqref{eq:linear} with $A$ replaced with the identity matrix
and with no lower order terms ($b=0$, $c=0$),
see~\eqref{eq:kolmogorov}. A change of variables will then yield the
result for any constant matrix $A$ satisfying the ellipticity
condition~\eqref{eq:elliptic}. Finally we shall classically
approximate the coefficients locally by constants to treat the general
case.

\subsubsection{The core estimate}

The proof of the Schauder estimate for the Kolmogorov equation follows
the argument proposed by Safonov \cite{safonov1984}, as explained in
\cite{krylov1996}.
\begin{theorem}[The Schauder estimate for the Kolmogorov
  equation]\label{thm:constant}
  Given $\alpha \in (0,1)$ and $S \in \cC_\ell^\alpha$, let
  $g \in \cC_\ell^{2+\alpha}$ be a solution to \eqref{eq:kolmogorov},
  i.e. $(\partial_t + v \cdot \nabla_x) g = \Delta_v g + S$ in
  $\R \times \R^d \times \R^d$. Then
  \[
    [g]_{\cC_\ell^{2+\alpha}} \lesssim_{d,\alpha}
    [S]_{\cC_\ell^\alpha}.
  \]
\end{theorem}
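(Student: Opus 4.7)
The plan is to apply Safonov's iterative scheme as presented in \cite{krylov1996} for the parabolic case, adapted to the kinetic scaling. The two ingredients already at hand are the Green function representation of Proposition~\ref{prop:pregreen} and the interior derivative estimates of Corollary~\ref{cor:deriv}. By Galilean invariance it is enough to bound $r^{-(2+\alpha)}\|g-\mathcal T_0[g]\|_{L^\infty(Q_r)}$ uniformly in $r>0$, which controls $[g]_{\cC^{2+\alpha}_\ell}$ thanks to Lemma~\ref{lem:NN0}. A direct calculation gives $\mathcal L_K \mathcal T_0[g] = \partial_t g(0) - \Delta_v g(0) = S(0)$, so $\tilde g := g - \mathcal T_0[g]$ solves $\mathcal L_K \tilde g = S - S(0) =: \bar S$ with $\mathcal T_0[\tilde g]=0$.

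At each scale $r>0$ I would split $\tilde g = \phi_r + \psi_r$ with $\phi_r := \cG \starkin (\bar S\, \un_{Q_r})$, so that $\psi_r$ solves the homogeneous Kolmogorov equation on $Q_r$. Combining Proposition~\ref{prop:pregreen} with the pointwise bound $|\bar S(z)| \le r^\alpha [S]_{\cC^\alpha_\ell}$ valid on $Q_r$ yields
\[
\|\phi_r\|_{L^\infty(Q_r)} \lesssim r^{2+\alpha}[S]_{\cC^\alpha_\ell},
\]
while Corollary~\ref{cor:deriv} bounds every kinetic-degree-$3$ derivative of $\psi_r$ on $Q_{r/2}$ by $r^{-3}\|\psi_r\|_{L^\infty(Q_r)}$; a Taylor expansion of kinetic order $2$ therefore gives, for $\rho \in (0,1/2]$,
\[
\|\psi_r - \mathcal T_0[\psi_r]\|_{L^\infty(Q_{\rho r})} \lesssim \rho^3 \bigl(\|\tilde g\|_{L^\infty(Q_r)} + r^{2+\alpha}[S]_{\cC^\alpha_\ell}\bigr).
\]

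The crucial algebraic observation is $\mathcal L_K \mathcal T_0[\psi_r] = \partial_t \psi_r(0) - \Delta_v \psi_r(0) = (\mathcal L_K \psi_r)(0) = 0$: hence updating the approximating polynomial from $\mathcal T_0[g]$ to $\mathcal T_0[g] + \mathcal T_0[\psi_r]$ preserves both its kinetic degree ($\le 2$) and the compatibility relation $\mathcal L_K(\cdot) = S(0)$. The identity $g - (\mathcal T_0[g] + \mathcal T_0[\psi_r]) = \phi_r + (\psi_r - \mathcal T_0[\psi_r])$ combined with the two bounds above, iterated at geometric scales $r_n = \rho^n r_0$, produces a linear recursion on the normalized defects $a_n := r_n^{-(2+\alpha)}\|g - p^{(n)}\|_{L^\infty(Q_{r_n})}$ with effective ratio $\rho^{1-\alpha}<1$ since $\alpha<1$. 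Taking $\rho$ small and exploiting scale invariance of the equation on $\R \times \R^d \times \R^d$ (to allow arbitrarily many iterations upstream of any target scale) makes the initial contribution --- finite by the a priori hypothesis $g \in \cC^{2+\alpha}_\ell$ --- vanish in the limit, leaving $[g]_{\cC^{2+\alpha}_\ell} \lesssim [S]_{\cC^\alpha_\ell}$ after Galilean translation to an arbitrary base point.

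The main obstacle is the identification of a self-consistent update rule for the approximating polynomial across iterations; the identity $\mathcal L_K \mathcal T_0[\psi_r] = 0$ closes the loop without generating a spurious constant source that would otherwise accumulate and spoil the estimate. The remainder is the standard Safonov geometric-series summation.
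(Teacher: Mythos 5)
Your proposal is correct and is essentially the paper's own proof: the same Safonov-type scheme built on splitting off the Green-kernel contribution of the nearby source via Proposition~\ref{prop:pregreen}, applying the interior derivative bounds of Corollary~\ref{cor:deriv} to the homogeneous remainder, and correcting the approximating polynomial by the kinetic Taylor expansion of that remainder (your observation $\mathcal L_K \mathcal T_0[\psi_r]=0$ is exactly the paper's corrector $\mathcal T_{z_0}[h_2]$). The only difference is bookkeeping: you iterate over geometric scales $r_n=\rho^n r_0$ and kill the initial term using the a priori finiteness of $[g]_{\cC_\ell^{2+\alpha}}$ together with the per-scale infimum characterization of Lemma~\ref{lem:NN0}, whereas the paper makes a single two-scale comparison $Q_r\subset Q_{(K+1)r}$ with $K$ large and absorbs the finite seminorm directly, both arguments resting on the same $(1-\alpha)$-gain ($\rho^{1-\alpha}$ versus $(K+1)^{-(1-\alpha)}$).
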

\begin{remark}
  This theorem is the hypoelliptic counterpart
  of~\cite[Theorem~8.6.1~\&~Lemma~8.7.1]{krylov1996}.
\end{remark}
\begin{proof}
  We first reduce to the case where
  $g \in C^\infty_c (\R \times \R^d \times \R^d)$ by mollification and
  truncation (as for instance in the proof of
  \cite[Lemma~8.7.1,~p.~122]{krylov1996}). We then reduce to the base
  point $z_0=0$ by considering the change of unknown
  $g^\sharp(z) := g(z_0 \circ z)$ (we however keep on calling the
  unknown $g$). Given $r>0$ and $K \ge 1$ to be chosen later, consider
  $Q_{(K+1)r}$ and a cut-off function $\zeta \in C^\infty_c$ such that
  $\zeta \equiv 1$ in $Q_{(K+1)r}$. Recall again the Kolmogorov
  operator $\mathcal L_K := \partial_t + v \cdot \nabla_x - \Delta_v$,
  and define $\bar S := \mathcal L_K (\zeta \mathcal T_{0}[g] )$ with
  the Taylor polynomial $\mathcal T_{z_0}[g]$ of $g$ at $(0,0,0)$,
  defined in~\eqref{e:taylor}.
  
  Decompose in $Q_{(K+1)r}$ (where $\zeta =1$):
  \[
    g - \mathcal T_{0}[g] = g  - \zeta
    \mathcal T_{0}[g] = \cG \starkin (S -\bar S) = h_1 + h_2 \quad \mbox{ with }
    \quad
    \begin{cases} h_1 := \cG \starkin \left[ (S- \bar S)
        \un_{Q_{(K+1)r}} \right], \\[2mm]
      h_2 := \cG \starkin \left[ (S-\bar S) \un_{Q^c_{(K+1)r}} \right]
    \end{cases}
  \]
  where
  $Q^c_{(K+1)r} = \R \times \R^d \times \R^d \setminus Q_{(K+1)r}$ and
  $\cG$ is the Green function studied in
  Proposition~\ref{prop:pregreen}.

  Observe that $h_1$ is the solution to
  $\partial_t h_1 + v \cdot \nabla_x h_1 -\Delta_v h_1 = S-S(0,0,0)$
  since
  \[
    \bar S(z) = \mathcal L_K(\zeta \mathcal T_0[g])(z) = \mathcal
    L_K(\mathcal T_0[g])(z)= \left(\partial_t + v \cdot \nabla_x
      g\right)(0,0,0) - \Delta_v g(0,0,0) = \mbox{cst} = S(0,0,0)
  \]
  for $z \in Q_{(K+1)r}$, and $h_2$ is the solution to
  $\partial_t h_2 + v \cdot \nabla_x h_2- \Delta_v h_2 = 0$ in
  $Q_{(K+1)r}$.

  We next estimate 
  \begin{equation}
    \label{eq:start}
    \| g - \mathcal T_{0}[g] - \mathcal T_{0}[h_2] \|_{L^\infty(Q_r)} \le \|
    h_2 - \mathcal T_{0}[h_2] \|_{L^\infty(Q_r)} + \| h_1\|_{L^\infty(Q_r)}.
  \end{equation}
  Using Proposition~\ref{prop:pregreen} and $S \in \cC^\alpha_\ell$,
  we get
  \begin{equation}
    \label{eq:g}
    \| h_1 \|_{L^\infty(Q_r)} \lesssim
    (K+1)^{2+\alpha} r^{2+\alpha} [S]_{\cC_\ell^{\alpha}(Q_{(K+1)r})}.
  \end{equation}
  Now for $z = (r^2 t, r^3 x, rv) \in Q_r$ with $(t,x,v) \in Q_1$.
  There exists $\theta_1, \theta_2, \theta_3 \in (0,1)$ such that
  \begin{align*}
    h_2(z) =
    & h_2(r^2 t, r^3 x , rv) \\
    = & h_2(r^2 t, 0, rv) + (r^3 x) \cdot \nabla_x h_2 (r^2t,r^3 \theta_1 x
        ,r v) \\
    = & h_2(0, 0, rv) + (r^3 x) \cdot \nabla_x h_2 (r^2t,r^3 \theta_1 x
        ,r v)
        + r^2t \partial_t h_2 ( \theta_2 r^2 t, 0,r v) \\
    = & h_2(0,0,0) + (r^3 x) \cdot \nabla_x h_2 (r^2t,r^3 \theta_1 x
        ,r v)
        + r^2t \partial_t h_2 ( \theta_2 r^2 t, 0,r v) \\
    & \quad 
      +  \nabla_v h_2(0,0,0) \cdot (rv)
      + \frac12   (rv) ^T \cdot D^2_v h_2(0,0, r \theta_3 v) \cdot (rv).
  \end{align*}
  As a consequence
  \begin{align*}
    \| h_2 - \mathcal T_{0}[h_2]\|_{L^\infty(Q_r)}
    & \le r^2 \Big[ r \|\nabla_x h_2\|_{L^\infty(Q_{(K+1)r})} + 
      r^2 \|\partial^2_t h_2\|_{L^\infty(Q_{(K+1)r})} \\
    & \qquad + r \|\partial_t \nabla_v h_2 \|_{L^\infty(Q_{(K+1)r})}
      + r \| D^3_v h_2 \|_{L^\infty(Q_{(K+1)r})} \Big].
  \end{align*}
  We remark that $h_2$ satisfies \eqref{eq:kolmogorov} with
  $S\equiv 0$ in $Q_{(K+1)r}$. We thus can apply
  Corollary~\ref{cor:deriv} and get
  \begin{align*}
    \|h_2 - \mathcal T_{0}[h_2]\|_{L^\infty(Q_r)}
    &\lesssim r^2 \left[\frac{r}{((K+1)r)^3}
      +
      \frac{r^2}{((K+1)r)^4}+\frac{r}{((K+1)r)^3}+\frac{r}{((K+1)r)^3}\right]
      \|h_2\|_{L^\infty(Q_{(K+1)r})} \\
    & \lesssim (K+1)^{-3} \|h_2\|_{L^\infty(Q_{(K+1)r})}.
  \end{align*}
  Since $g-\mathcal T_{0} g = h_1 +h_2$, we can estimate
  $\|h_2\|_{L^\infty(Q_{(K+1)r})}$ as follows
  \begin{align*}
    \|h_2\|_{L^\infty(Q_{(K+1)r})}
    &\le \|h_1\|_{L^\infty(Q_{(K+1)r})} + \|g - \mathcal T_{0}[g] \|_{L^\infty(Q_{(K+1)r})} \\
    & \lesssim (K+1)^{2+\alpha} r^{2+\alpha} \left( [S]_{\cC_\ell^{\alpha}(Q_{(K+1)r})}
      + [g]_{\cC_\ell^{2+\alpha}(Q_{(K+1)r})} \right) 
  \end{align*}
  (we used \eqref{eq:g}) and get
  \begin{equation}\label{eq:h}
    \|h_2 -\mathcal T_{0}[h_2]\|_{L^\infty(Q_r)} \le C
    \frac{r^{2+\alpha}}{(K+1)^{1-\alpha}}
    \left([S]_{\cC_\ell^{\alpha}(Q_{(K+1)r})} + [g]_{\cC_\ell^{2+\alpha}(Q_{(K+1)r})}\right).
  \end{equation}
  Combining \eqref{eq:start}, \eqref{eq:g} and \eqref{eq:h}, we get 
  \begin{align*}
    \| g - \mathcal
    T_{0}[g] &- \mathcal T_{0} [h]_2 \|_{L^\infty(Q_r)} \\
           & \le \| h_2 - \mathcal T_{0} h_2 \|_{L^\infty(Q_r)} + \|
             h_1\|_{L^\infty(Q_r)} \\
           & \le C (K+1)^{2+\alpha} r^{2+\alpha}
             [S]_{\cC_\ell^{\alpha}(Q_{(K+1)r})}
             + C \frac{r^{2+\alpha}}{(K+1)^{1-\alpha}}
             [S]_{\cC_\ell^{\alpha}(Q_{(K+1)r})} +
             C \frac{r^{2+\alpha}}{(K+1)^{1-\alpha}}
             [g]_{\cC_\ell^{2+\alpha}(Q_{(K+1)r})} \\
  \end{align*}
  and by setting $K$ large enough so that
  $C (K+1)^{-(1-\alpha)} \le \frac12$, it results into
  \[
    [g]_{\cC_\ell^{2+\alpha}} \le \sup_{z_0,r>0} \frac{ \| g -
      \mathcal T_{z_0}[g] - \mathcal T_{z_0}[h_2]
      \|_{L^\infty(Q_r)}}{r^{2+\alpha}} \le C_K
    [S]_{\cC_\ell^{\alpha}}
  \]
  where we have used that
  $\mathcal T_{z_0}[g] + \mathcal T_{z_0}[h_2]$ is a polynomial of
  kinetic order less or equal to $2$, which concludes the proof. Note
  the \emph{corrector} $\mathcal T_{z_0}[h_2]$ to the Taylor
  polymomial $\mathcal T_{z_0}[g]$ used in this argument.
\end{proof}

The general constant coefficients case is reached through a change of
variables.
\begin{corollary}[Schauder estimates for
  constant diffusion coefficients]\label{cor:constant}
  Let $\alpha \in (0,1)$ and $S \in \cC_\ell^\alpha$ and let
  $A:=(a^{i,j})$ be a constant real $d \times d$ matrix that satisfies
  \eqref{eq:elliptic}. Then for all solution $g \in \cC_\ell^{2+\alpha}$ to
  \[
    (\partial_t + v \cdot \nabla_x) g = \sum_{1 \le i, j \le d}
    a^{i,j} \partial^2_{v_i v_j} g \quad \text{ in } \R \times \R^d
    \times \R^d
  \]
  we have,
  \[
    [g]_{\cC_\ell^{2+\alpha}} \le C [S]_{\cC_\ell^{\alpha}}
  \]
  where the constant $C$ depends on $d$, $\alpha$, the norm of the
  (constant) matrix $A$ and $\lambda$ in \eqref{eq:elliptic}.
\end{corollary}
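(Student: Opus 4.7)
The strategy is to reduce to the isotropic case $A=\mathrm{Id}$, already handled by Theorem~\ref{thm:constant}, through a global linear change of coordinates in $(x,v)$. Only the symmetric part of $A$ enters in $\sum a^{ij}\partial^2_{v_iv_j}g$, so we may assume $A$ is symmetric, and then by ellipticity \eqref{eq:elliptic} it is positive definite. Set $T := A^{-1/2}$, so that $T$ is symmetric, $T A T^T = \mathrm{Id}$, and both $\|T\|$ and $\|T^{-1}\|$ are controlled by $\lambda$ and $\|A\|$. Define the transformed unknowns $\tilde g(t,\tilde x,\tilde v) := g(t, T^{-1}\tilde x, T^{-1}\tilde v)$ and $\tilde S$ analogously.

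A direct chain-rule computation gives $v\cdot\nabla_x g = \tilde v\cdot\nabla_{\tilde x}\tilde g$ and
\[
\sum_{i,j} a^{ij}\,\partial^2_{v_iv_j}g \;=\; \mathrm{tr}\!\left( T A T^T\, \nabla^2_{\tilde v}\tilde g \right) \;=\; \Delta_{\tilde v}\tilde g,
\]
so $\tilde g$ solves the Kolmogorov equation $(\partial_t + \tilde v\cdot\nabla_{\tilde x})\tilde g = \Delta_{\tilde v}\tilde g + \tilde S$ on $\R\times\R^d\times\R^d$. Theorem~\ref{thm:constant} therefore yields $[\tilde g]_{\cC_\ell^{2+\alpha}}\lesssim_{d,\alpha}[\tilde S]_{\cC_\ell^\alpha}$.

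It remains to transfer these semi-norms back to the original variables. The map $\Phi(t,x,v):=(t,Tx,Tv)$ is an automorphism of the Galilean group $\circ$ (because $T$ is linear), and it commutes with the kinetic scaling $S_r$. The image $\Phi(Q_r(z_0))$ is described by $-r^2<t-t_0\le 0$, $|T^{-1}(\tilde x-\tilde x_0-(t-t_0)\tilde v_0)|<r^3$ and $|T^{-1}(\tilde v-\tilde v_0)|<r$, hence is sandwiched between the cylinders $Q_{r/c}(\Phi(z_0))$ and $Q_{cr}(\Phi(z_0))$ with $c:=\max(\|T\|,\|T^{-1}\|)$. Because the semi-norms $[\cdot]_{\cC_\ell^\beta}$ are defined as sup over all admissible cylinders, we deduce two-sided bounds $[g]_{\cC_\ell^{2+\alpha}}\lesssim[\tilde g]_{\cC_\ell^{2+\alpha}}\lesssim [g]_{\cC_\ell^{2+\alpha}}$ and similarly for $S$, with constants depending only on $d$, $\lambda$ and $\|A\|$. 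Chaining these with the Schauder estimate for $\tilde g$ concludes the proof.

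The only nontrivial verification in the argument is that the anisotropic kinetic cylinders $Q_r$ deform by only a bounded factor under $\Phi$ and that $\Phi$ is a homomorphism of $(\R\times\R^d\times\R^d,\circ)$; both are elementary once set up. No new analytic idea beyond Theorem~\ref{thm:constant} is required: this corollary is purely a change-of-variables statement, and the entire analytic content sits in the identity case.
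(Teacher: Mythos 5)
Your proof is correct and is essentially the paper's own argument: a linear change of variables $x \mapsto A^{1/2}x$, $v \mapsto A^{1/2}v$ (i.e.\ $\bar g(t,x,v)=g(t,Bx,Bv)$ with $B$ the symmetric square root adapted to $A$) turns the equation into the Kolmogorov equation, and the estimate then follows from Theorem~\ref{thm:constant}. The only difference is that you spell out the routine but genuinely needed verification that the map is a homomorphism for $\circ$, commutes with the kinetic scaling, and distorts cylinders (hence the $\cC_\ell$ semi-norms of $g$ and $S$) only by factors controlled by $\lambda$ and $\|A\|$ --- a point the paper leaves implicit in the stated dependence of the constant $C$.
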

\begin{remark}
This is the counterpart to \cite[Theorem~8.9.1,~p.~127]{krylov1996}.
\end{remark}
\begin{proof}
  Consider the change of variables
  \[
    \bar g(t,x,v) := g(t,B x,B v)
  \]
  with $B^2=A^{-1}$. Then we have for
  $(t,x,v) \in \R \times \R^d \times \R^d$,
  \begin{align*}
    (\partial_t + v \cdot \nabla_x) \bar g (t,x,v) - \Delta_v \bar g(t,x,v)
    &= (\partial_t + v \cdot \nabla_x)  g (t,B x,B v) 
    - \sum_{1 \le i,j \le d} a^{i,j} \partial^2_{v_i v_j} g(t,B x, B v) \\
    &=   S(t,Bx,Bv) =: \bar S (t,x,v)
  \end{align*}
  and the result follows from Theorem~\ref{thm:constant}.
\end{proof}

\subsubsection{Proof of Theorem~\ref{thm:variable}}

\begin{proof}[Proof of Theorem~\ref{thm:variable}]
  It is enough to treat the case where $b \equiv 0$ and $c \equiv 0$,
  the general case is then treated by interpolation (using
  Lemma~\ref{l:interpol}). We consider a constant $\gamma>0$ which
  will be fixed later, and pick
  $z_0,z_1 \in \R \times \R^d \times \R^d$ and $r >0$ such that
  $z_1 \in Q_r(z_2)$ and
  \[
    [g]_{\cC_\ell^{2+\alpha}} \le 2 \frac{\left| g (z_1) -
      \mathcal{T}_{z_0}[g](z_1)\right|}{r^{2+\alpha}}.
  \]
  \smallskip
  
  \noindent {\bf Case~1: $r \ge \gamma$.} We compute then
  \begin{align*}
    [g]_{\cC_\ell^{2+ \alpha}}
    & \le 2   \left( 2 \gamma^{-2-\alpha} \|g\|_{L^\infty}
      + \| (\partial_t + v \cdot \nabla_x) g\|_{L^\infty} \gamma^{-\alpha}
      + \|\nabla_v g \|_{L^\infty} \gamma^{-1-\alpha}
      + \frac12 \| D^2_v g \|_{L^\infty} \gamma^{-\alpha} \right) \\
    & \le  \frac14 [g]_{\cC_\ell^{2+\alpha}} + C_1(\gamma) \|g\|_{L^\infty}
  \end{align*}
  (using again Lemma~\ref{l:interpol}) with $C_1(\gamma) >0$ depending
  on $\gamma$ and $d$.

  \smallskip
  \noindent
  {\bf Case~2: $r \le \gamma$.} Then $z_1 \in Q_\gamma (z_0)$ and we
  consider a $C^\infty$ cut-off function $0 \le \xi \le 1$ that is
  equal to $1$ on $Q_\gamma(z_0)$ and equal to zero outside
  $Q_{2\gamma}(z_0)$.   
  In particular, $\xi (z_1)=\xi(z_0)=1$.  We now use
  Corollary~\ref{cor:constant} to get
  \begin{align*}
    [g]_{\cC_\ell^{2+ \alpha}}
    & \le 2  [g \xi]_{\cC_\ell^{2+\alpha}} 
     \le C \left[  (\partial_t  + v\cdot \nabla_x) (g\xi)
      - \sum_{i,j} a^{i,j}(z_1)
      \partial_{v_iv_j} ^2 (g\xi)\right]_{\cC_\ell^{\alpha}} \\
    & \le C \left[  (\partial_t  + v\cdot \nabla_x) (g\xi)
      - \sum_{i,j} a^{i,j}(\cdot)
      \partial_{v_iv_j} ^2 (g\xi) \right]_{\cC_\ell^{\alpha}}
      + C \left[\sum_{i,j} \left(a^{i,j}(\cdot) - a^{i,j}(z_1)\right)
      \partial_{v_iv_j} ^2 (g\xi) \right]_{\cC_\ell^{\alpha}(Q_{2 \gamma}(z_2))}
  \end{align*}
  where we have added the restriction to the support of $\xi$ in the
  last term. We estimate successively the two terms of the right hand
  side. On the one hand, recalling that
  $(\partial_t + v\cdot \nabla_x) g = \sum_{i,j} a^{i,j}\partial_{v_iv_j} ^2 g+S$,
  \begin{align*}
    \left[  (\partial_t  + v\cdot \nabla_x) (g\xi) -\sum_{i,j} a^{i,j}
    \partial^2_{v_i v_j}(g\xi) \right]_{\cC_\ell^{\alpha}}  
    & \le  \left[  (\partial_t  + v\cdot \nabla_x) g - \sum_{i,j} a^{i,j}
      \partial_{v_i v_j} ^2g \right]_{\cC_\ell^{\alpha}} \\
    & \qquad + \left[  g \left\{ (\partial_t  + v\cdot \nabla_x) (\xi)
      - \sum_{i,j} a^{i,j}
      \partial_{v_i v_j} ^2\xi \right\} \right]_{\cC_\ell^{\alpha}}
      + \left[\nabla_v g \cdot \nabla_v \xi\right]_{\cC_\ell^{\alpha}} \\
    & \lesssim_\gamma  [S]_{\cC_\ell^{\alpha}}
      + \|g\|_{\cC_\ell^{\alpha}} + \| \nabla_v g \|_{\cC_\ell^{\alpha}} \\
    & \le \frac14 [g]_{\cC_\ell^{2+\alpha}}
      + C_2(\gamma) \left([S]_{\cC_\ell^{\alpha}}+\|g\|_{L^\infty}\right)
  \end{align*}
  (using again Lemma~\ref{l:interpol}) for a constant $C_2(\gamma)>0$
  depending on $d$, $\|A\|_{\cC_\ell^{\alpha}}$, $\alpha$.  On the
  other hand,
  \begin{align*}
    \left[\sum_{i,j} \left(a^{i,j}(\cdot) - a^{i,j}(z_1)\right)
    \partial_{v_iv_j} ^2(g\xi) \right]_{\cC_\ell^{\alpha}(Q_{2\gamma} (z_2))} 
    & \lesssim \gamma^{\alpha} [D^2_v g]_{\cC_\ell^{\alpha}}
      + \|D^2_v g \|_{L^\infty} \\ 
    & \le C_3 \gamma^\alpha [g]_{\cC_\ell^{2+\alpha}} + C_4(\gamma) \|g\|_{L^\infty}
  \end{align*}
  using again Lemma~\ref{l:interpol}, for some constants $C_3>0$ and
  $C_4(\gamma)$ depending $\gamma$. Combining the last three
  estimates yields finally in the case $r \le \gamma$:
  \[
    [g]_{\cC_\ell^{2+\alpha}} \le \left( C_3 \gamma^\alpha + \frac14
    \right) 
    [g]_{\cC_\ell^{2+\alpha}} + \left( C_2(\gamma) + C_4(\gamma)
    \right) \left([S]_{\cC_\ell^\alpha} + \|g\|_{L^\infty}\right).
  \]
  
  We now pick $\gamma$ such that
  $C \gamma^\alpha + \frac14 \le \frac12$ and we thus get in both
  cases ($r \ge \gamma$ and $r \le \gamma$) that
  \[
    [g]_{\cC_\ell^{2+\alpha}} \le \frac12 [g]_{\cC_\ell^{2+\alpha}} + C_5(\gamma)
    \left([S]_{\cC_\ell^\alpha} + \|g\|_{L^\infty}\right)
  \]
  for some constant $C_5(\gamma)>0$, which concludes the proof
  of~\eqref{eq:schauder-main} thanks to~\eqref{eq:interp4-bis} and
  \eqref{eq:interp5-bis}.
\end{proof}

\subsection{Localization of the Schauder estimate}
\label{sec:local}

\begin{theorem}[Local Schauder estimate]\label{thm:variable-loc}
  Given $\alpha \in (0,1)$ and $a^{i,j},b^i,c, S \in\cC_\ell^{\alpha}$
  satisfying \eqref{eq:elliptic} for some constant $\lambda >0$, any
  solution $g \in \cC_\ell^{2+\alpha}$ to \eqref{eq:linear} satisfies
  for all $z_0 \in \R \times \R^d \times \R^d$
  \begin{align*}
    \| g\|_{\cC_\ell^{2+\alpha}(Q_1(z_0))}
    \le C \left([ S ]_{\cC_\ell^{\alpha}(Q_2(z_0))} + \| g
    \|_{L^\infty(Q_2(z_0))} \right)
  \end{align*}
  where the constant $C$ depends on $d$, $\lambda$ and $\alpha$ and
  $\|a^{i,j}\|_{\cC_\ell^{\alpha}}$, $\|b^i\|_{\cC_\ell^{\alpha}}$,
  $\|c\|_{\cC_\ell^{\alpha}}$ for $i,j=1,\dots,d$.
\end{theorem}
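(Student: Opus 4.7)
The plan is to deduce the local estimate from the global Schauder estimate Theorem~\ref{thm:variable} by the usual cutoff-plus-iteration argument: multiplying $g$ by a suitable cutoff produces a function with compact support which solves a similar equation with modified source, so the global estimate applies; the new source contains first-order terms in $g$, which one absorbs by a Simon-type iteration on concentric cylinders together with the interpolation inequalities of Lemma~\ref{l:interpol}.

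First, after translating by $z_0$ via the Galilean change of variables $g^\sharp(z) = g(z_0 \circ z)$, we may assume $z_0 = 0$. Fix two radii $1 \le r < r' \le 2$ and pick a smooth cutoff $\chi$, with $\chi \equiv 1$ on $Q_r$ and $\mathrm{supp}\,\chi \subset Q_{r'}$, whose ``kinetic'' derivatives satisfy the scale-consistent bounds $\|(\partial_t + v \cdot \nabla_x)\chi\|_{L^\infty} + \|\nabla_v \chi\|_{L^\infty}^2 + \|D_v^2\chi\|_{L^\infty} \lesssim (r'-r)^{-2}$ (using the kinetic scaling $S_r$, such a $\chi$ is obtained by composing a fixed bump with $S_{1/(r'-r)}$). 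We extend the coefficients $a^{i,j},b^i,c$ outside $Q_2$ so as to preserve their $\cC_\ell^\alpha$ norms and the ellipticity constant $\lambda$ (for instance by multiplying by a smooth cutoff and adding $\lambda\,\mathrm{Id}$ to $A$ outside); the equation for $h := \chi g$ holds then on $\R \times \R^d \times \R^d$, since $h \equiv 0$ outside $Q_{r'}$.

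A direct computation using the product rule shows that $h$ solves \eqref{eq:linear} with source
\[
\tilde S = \chi S + g \bigl[(\partial_t + v \cdot \nabla_x)\chi - \sum_{i,j} a^{i,j} \partial^2_{v_iv_j}\chi - b \cdot \nabla_v \chi\bigr] - 2\sum_{i,j} a^{i,j}\, \partial_{v_i}\chi\, \partial_{v_j} g.
\]
Applying Theorem~\ref{thm:variable} to $h$ gives
\[
[g]_{\cC_\ell^{2+\alpha}(Q_r)} \le [h]_{\cC_\ell^{2+\alpha}} \lesssim [\tilde S]_{\cC_\ell^\alpha} + \|h\|_{L^\infty} \lesssim [\tilde S]_{\cC_\ell^\alpha} + \|g\|_{L^\infty(Q_2)}.
\]
The first two groups of terms in $\tilde S$ are routine: they produce a contribution bounded by $[S]_{\cC_\ell^\alpha(Q_2)} + C(r'-r)^{-\mu}\|g\|_{\cC_\ell^\alpha(Q_{r'})}$ for some $\mu>0$ depending only on the $\cC_\ell^\alpha$ norms of the coefficients. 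The delicate term is $\sum a^{i,j}\partial_{v_i}\chi\,\partial_{v_j}g$, which involves the first velocity derivative of $g$; its $\cC_\ell^\alpha$ semi-norm is bounded by $C(r'-r)^{-1-\alpha}\|\nabla_v g\|_{\cC_\ell^\alpha(Q_{r'})}$ (using $[a^{i,j}]_{\cC_\ell^\alpha}$).

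To close the argument, one invokes the interpolation inequality \eqref{eq:interp3} on the open set $Q_{r'}$: for any $\eta > 0$,
\[
\|\nabla_v g\|_{\cC_\ell^\alpha(Q_{r'})} \le \eta [g]_{\cC_\ell^{2+\alpha}(Q_{r'})} + C\eta^{-\gamma}\|g\|_{L^\infty(Q_2)},
\]
and similarly for $\|g\|_{\cC_\ell^\alpha(Q_{r'})}$. Choosing $\eta$ proportional to a small power of $(r'-r)$ so that the coefficient in front of $[g]_{\cC_\ell^{2+\alpha}(Q_{r'})}$ is at most $\tfrac12$, one arrives at the key inequality
\[
\phi(r) \le \frac12\, \phi(r') + \frac{C}{(r'-r)^\nu} \bigl([S]_{\cC_\ell^\alpha(Q_2)} + \|g\|_{L^\infty(Q_2)}\bigr), \qquad \phi(\rho) := [g]_{\cC_\ell^{2+\alpha}(Q_\rho)},
\]
valid for all $1 \le r < r' \le 2$, with $\nu>0$ depending only on $\alpha$. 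The standard iteration lemma (Simon/Krylov) applied to $\phi$ on $[1,2]$ then yields $\phi(1) \le C([S]_{\cC_\ell^\alpha(Q_2)} + \|g\|_{L^\infty(Q_2)})$, which combined with $\|g\|_{L^\infty(Q_1)} \le \|g\|_{L^\infty(Q_2)}$ gives the claim.

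The main technical burden is Step~2 (tracking the powers of $(r'-r)$ in each term of $\tilde S$ and in the cutoff derivatives, in a way that is consistent with the kinetic scaling, including the anisotropic Galilean transport term $(\partial_t + v\cdot\nabla_x)\chi$) and Step~5 (choosing $\eta=\eta(r'-r)$ so that absorption works with the right exponent $\nu$). Extending the coefficients globally while preserving $\lambda$ and the $\cC_\ell^\alpha$ norms is straightforward but worth mentioning; it could alternatively be avoided by stating and proving a version of Theorem~\ref{thm:variable} on the compactly supported function $h$ directly.
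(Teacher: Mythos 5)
Your plan is correct and is essentially the paper's own proof: the paper likewise cuts off $g$ on nested cylinders, applies the global estimate of Theorem~\ref{thm:variable} to the truncated function in the form $[\zeta g]_{\cC_\ell^{2+\alpha}} \lesssim [\mathcal{L}(\zeta g)]_{\cC_\ell^{\alpha}} + \|\zeta g\|_{L^\infty}$, and absorbs the commutator terms $g\,\mathcal{L}\zeta + (A\nabla_v g)\cdot\nabla_v\zeta$ via the interpolation inequalities of Lemma~\ref{l:interpol} (note the coefficients are already globally $\cC_\ell^\alpha$ by hypothesis, so your extension step is superfluous). The only difference is bookkeeping: the paper iterates over the dyadic radii $R_n=\sum_{j\le n}2^{-j}$ with cutoff constants $\rho^{-n}$ and closes the estimate by summing the weighted geometric series $\sum_n \eps_0^n A_n$, whereas you work with two arbitrary radii $1\le r<r'\le 2$ and invoke the standard Simon/Giaquinta absorption lemma --- an equivalent device, both relying on the a priori finiteness of $[g]_{\cC_\ell^{2+\alpha}}$.
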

\begin{proof}
  We use the strategy of \cite[Theorem~8.11.1]{krylov1996}. Consider
  $z_0=0$ without loss of generality and define
  $R_n := \sum_{j =0}^n 2^{-j}$ for $n \ge 0$. Define a cutoff
  function $\zeta_n$ that is smooth, one on $Q_{R_n}$ and zero outside
  $Q_{R_{n+1}}$. It satisfies the controls
  \[
    \| \zeta_n \|_{\cC_\ell^{\alpha}}, \| v \cdot \nabla_x \zeta_n
  \|_{\cC_\ell^{\alpha}}, \| \nabla_v \zeta_n \|_{\cC_\ell^{\alpha}}, \|
  \nabla_v ^2 \zeta_n \|_{\cC_\ell^{\alpha}} \lesssim \rho^{-n} \quad
  \mbox{ with  } \quad \rho :=2^{2+\alpha}.
\]

Then apply the non-localized estimate of Theorem~\ref{thm:variable} to
$\zeta_n g$. In order to do so, it is convenient to consider the
differential operator
$\mathcal{L} = (\partial_t + v \cdot \nabla_x) - \sum_{i,j} a^{i,j}
\partial^2_{v_i v_j} - \sum_i b^i \partial_{v_i}$, recall
$\mathcal{L} g =S$ and write
  \begin{align*}
    A_n := [ g ]_{\cC_\ell^{2+\alpha}(Q_{R_n})}   \le
    & [ \zeta_n g]_{\cC_\ell^{2+\alpha}}
      \lesssim  [ \mathcal L (\zeta_n g)  ]_{\cC_\ell^{\alpha}}
      + \| \zeta_n g \|_{L^\infty} \\
    \lesssim
    &  \left[ \zeta_n \mathcal L g + g \mathcal{L} \zeta_n
      + (A \nabla_v g) \cdot \nabla_v \zeta_n
      \right]_{\cC_\ell^{\alpha}}
      + \| \zeta_n g \|_{L^\infty} \\
    \lesssim
    & [S]_{\cC_\ell^{\alpha}(Q_2)} + \rho^{-n} 
      \left( \| g\|_{\cC_\ell^{\alpha}(Q_{R_{n+1}})}
      + \| \nabla_v g\|_{\cC_\ell^{\alpha}(Q_{R_{n+1}})}\right)
      +\|g\|_{L^\infty(Q_{R_{n+1}})}  \\
    \intertext{ and use the interpolation inequalities from
    Lemma~\ref{l:interpol} to get for all $\eps_n >0$,}
    \lesssim
    &  [S]_{\cC_\ell^{\alpha}(Q_2)} + \rho^{-n} \left( \eps_n A_{n+1}
      + \eps_n^{-\beta} \rho^{-n} \|g\|_{L^\infty(Q_2)}\right)
      + \|g\|_{L^\infty(Q_{R_{n+1}})} 
  \end{align*}
  for some $\beta >0$ (note the additional factor
    $\rho^{-n}$ due to the dependency of the constant $\tilde{C}$ in
    the difference of radii in Lemma~\ref{l:interpol}). Choosing next
  $\eps_n := \eps_0 \rho^n$ for $\eps_0 \in (0,1)$ small enough yields
  \begin{align*}
    A_n \lesssim [S]_{\cC_\ell^{\alpha}(Q_2)} + \eps_0 A_{n+1}
    + \eps_0 ^{-\beta} \rho^{-(\beta+2) n} \| g \|_{L^\infty(Q_2)}
  \end{align*}
  Consider then the geometric sum $\sum_{n \ge 0} \eps_0 ^n A_n$, and  calculate
  \[
    \sum_{n \ge 0} \eps_0^n  A_n    \lesssim \left(\sum_{n \ge 0}
      \eps_0^n \right)
    [S]_{\cC_\ell^{\alpha}( Q_{2})} + \sum_{n \ge 0}
    \eps_0^{n+1} A_{n+1} + \eps_0 ^{-\beta}
    \sum_{n \ge 0} \left(
      \frac{\eps_0}{\rho^{\beta+2}} \right) ^n
    \| g \|_{L^\infty(Q_2)}. 
  \]
  Assuming $\eps_0 < \rho^{\beta+2}<1$ and cancelling terms gives
  finally:
  \[
    A_0 \lesssim  [S]_{\cC_\ell^{\alpha}(Q_{2})} +  \| g \|_{L^\infty(Q_2)}
  \]
  which concludes the proof. 
\end{proof}

\section{Global existence for the toy model}
\label{sec:toy}

This section is devoted to the proof of
Theorem~\ref{theo:main-holder}. Equation~\eqref{eq:main} is rewritten
with the unknown function $g := f \mu^{-\frac12}$, where $\mu$ denotes
the Gaussian $(2\pi)^{-d/2} e^{-|v|^2/2}$. This new function satisfies
\begin{equation}\label{eq:toyagain}
  \partial_t g + v \cdot \nabla_x g = \fR[g] U[g]
\end{equation}
\[
  \mbox{ with } \quad 
  \fR[g] := \int_v g \sqrt\mu \dd v \quad \mbox{ and } \quad U[g] :=
  \Delta_v g + \left(\frac{d}2 - \frac{|v|^2}{4}\right) g =
  \frac1{\sqrt{\mu}} \nabla_v \left(\mu \nabla_v
    \left(\frac{1}{\sqrt{\mu}} g\right)\right).
\]
After this rescaling the operator $U$ is symmetric in
$L^2(\dd x \dd v)$, without weight.
In contrast with \eqref{eq:main}, this operator has no first order
term in the velocity variable. But a (simpler) difficulty is created
with the appearance of the unbounded zero order term
$(\frac{d}2 - \frac{|v|^2}4)g$. We overcome it using that $g$ stays in
between two Maxwellians (see Lemma~\ref{l:gaussian} below) and that,
after rescaling, the H\"older estimate from \cite{wz09,gimv} in
Theorem~\ref{t:gimv} encodes decay in the $v$ variable
(Proposition~\ref{prop:local}).

\subsection{Gaussian bounds}

We first explain how to propagate in time the Gaussian bounds
satisfied by the initial data $g_{in} (x,v) = g(0,x,v)$.
\begin{lemma}[Gaussian bounds]\label{l:gaussian}
  Consider a strong solution $g$ to \eqref{eq:toyagain} in
  $L^\infty_t([0,T],H^2(\T^d\times\R^d))$ such that
  $C_1 \sqrt{\mu} \le g(0,x,v) \le C_2 \sqrt{\mu}$ in
  $\T^d \times \R^d$, then for almost all $t \in [0,T]$, we have
  \[
    \forall \, (x,v) \in \T^d \times \R^d, \quad C_1 \sqrt{\mu(v)} \le
    g(t,x,v) \le C_2 \sqrt{\mu(v)}.
  \]
\end{lemma}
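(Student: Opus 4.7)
The plan is to run $L^2$ energy estimates on the positive parts of $g-C_2\sqrt{\mu}$ and of $C_1\sqrt{\mu}-g$, exploiting the fact that any constant multiple of $\sqrt\mu$ is a stationary solution of~\eqref{eq:toyagain}. The key identity is
\[
U[\sqrt\mu] = \frac{1}{\sqrt\mu}\nabla_v\cdot\bigl(\mu\,\nabla_v 1\bigr) = 0,
\]
so that both $\psi := g - C_2\sqrt\mu$ and $\phi := C_1\sqrt\mu - g$ satisfy, by linearity of $U$ and independence of $\sqrt\mu$ from $(t,x)$, the linear (in the new unknown) equation
\[
(\partial_t + v\cdot\nabla_x) w = \fR[g]\, U[w].
\]
The initial conditions guarantee $\psi_+(0)=\phi_+(0)=0$ identically on $\T^d\times\R^d$.

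Multiplying the $\psi$-equation by $\psi_+$ and integrating over $\T^d\times\R^d$, the transport term vanishes because $v\cdot\nabla_x(\psi_+^2/2) = \nabla_x\cdot(v\psi_+^2/2)$ is an exact $x$-divergence on the torus. For the right-hand side, I use the symmetric form $U[w]=\mu^{-1/2}\nabla_v\cdot\bigl(\mu\,\nabla_v(w/\sqrt\mu)\bigr)$ together with the pointwise identity $\psi_+ = \sqrt\mu\,(\psi/\sqrt\mu)_+$ and integrate by parts in $v$, obtaining
\[
\int_{\T^d\times\R^d} U[\psi]\,\psi_+\dd x\dd v = -\int_{\T^d\times\R^d}\mu\,\bigl|\nabla_v(\psi/\sqrt\mu)_+\bigr|^2\dd x\dd v \le 0,
\]
using the classical Stampacchia identity $\nabla_v u\cdot\nabla_v u_+ = |\nabla_v u_+|^2$ a.e. The same computation applies to $\phi$. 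Hence, \emph{as long as $\fR[g]\ge 0$}, both $\|\psi_+(t)\|_{L^2}^2$ and $\|\phi_+(t)\|_{L^2}^2$ are non-increasing in $t$, and since they vanish at $t=0$ they stay identically $0$, which gives the two desired Gaussian bounds.

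The positivity of $\fR[g]$ is bootstrapped by a short continuity argument in $t$: on the interval $[0,T^*]$ where both bounds $C_1\sqrt\mu\le g\le C_2\sqrt\mu$ hold, one has $\fR[g]\ge C_1\int_{\R^d}\mu\dd v = C_1>0$, so the energy estimate above yields the same bounds on a slightly larger interval, and a standard open-closed argument in $[0,T]$ gives $T^*=T$. The main technical obstacle I anticipate is justifying the integration by parts in $v$ on the unbounded domain $\R^d$ and the pointwise differentiation in time of $\|\psi_+(t)\|_{L^2}^2$ when $g$ is only known to lie in $L^\infty_t H^2(\T^d\times\R^d)$: both points are handled by a routine approximation argument (mollification in $(x,v)$ combined with a cutoff in $v$), taking advantage of the Gaussian weight $\mu$ which kills the boundary terms at $|v|=\infty$ and of the Stampacchia chain rule for truncations of $H^1$ functions.
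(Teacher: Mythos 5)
Your argument is essentially the paper's own proof: both rest on the observation that $C\sqrt\mu$ is a steady state ($U[\sqrt\mu]=0$), followed by the Stampacchia-type $L^2$ estimate on the positive parts of $g-C_2\sqrt\mu$ and $C_1\sqrt\mu-g$, with the transport term vanishing on $\T^d$ and the diffusion term acquiring a sign after integration by parts in $v$. Your additional bootstrap ensuring $\fR[g]\ge 0$ is extra care that the paper's proof leaves implicit (in practice the positivity of the coefficient is supplied by the iterative scheme of Theorem~\ref{t:loc-exis}); it is a sensible addition, but note that your open-closed step tacitly requires $\fR[g]$ to stay positive slightly beyond $T^*$, i.e.\ some continuity in time of $\fR[g]$ uniformly in $x$, which deserves a line of justification given that $g$ is only assumed continuous in time with values in $L^2$.
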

\begin{remark}
  The notion of strong solutions used in the latter lemma and the rest
  of this section is standard: all the terms in the equation are
  defined almost everywhere and the equation is satisfied almost
  everywhere and the solution is continuous in time with value in
  $L^2$. We could have considered weaker notions of solutions but it
  was unnecessary since we are interested in constructing global
  \emph{strong} solutions.
\end{remark}
\begin{proof}
  For $a \in \R$, we write $a_+ = \max (0,a)$ and $a_- =
  \max(0,-a)$. It is enough to prove that, for any $C>0$,
  \[
    \int_{\T^d \times \R^d} (g (t,x,v) - C \sqrt{\mu}
    (v))^2_\pm \dd x \dd v
  \]
  decreases along time. It follows from the fact that
  $\tilde g = g - C\sqrt\mu$ satisfies
  \[
    \partial_t \tilde g + v \cdot \nabla_x \tilde g = \fR[g] U[\tilde g]
  \]
  and, multiplying the previous equation by
  $(\tilde g)_+ = \max (0,\tilde g)$ or
    $(\tilde g)_- = \max(0,-\tilde g)$, and integrating with respect
  to $x$ and $v$ yields for almost all $t \in [0,T]$,
  \[
    \int_{\T^d \times \R^d} (g (t,x,v) - C \sqrt{\mu} (v))^2_\pm \dd
    x \dd v  \le \int_{\T^d \times \R^d} (g_{in} (x,v) - C \sqrt{\mu} (v))^2_\pm \dd
    x \dd v .
  \]
  In particular, for almost all $t \in [0,T]$,
  \begin{align*}
     \int_{\T^d \times \R^d} (g (t,x,v) - C_2 \sqrt{\mu} (v))^2_+ \dd
    x \dd v  \le \int_{\T^d \times \R^d} (g_{in} (x,v) - C_2 \sqrt{\mu} (v))^2_+ \dd
    x \dd v = 0 \\
     \int_{\T^d \times \R^d} (g (t,x,v) - C_1 \sqrt{\mu} (v))^2_- \dd
    x \dd v  \le \int_{\T^d \times \R^d} (g_{in} (x,v) - C_1 \sqrt{\mu} (v))^2_- \dd
    x \dd v = 0.
  \end{align*}
  We conclude that for almost all $t \in [0,T]$,
  $C_1 \sqrt\mu (v) \le g (t,x,v) \le C_2 \sqrt\mu (v)$ in
  $\T^d \times \R^d$.
\end{proof}

\subsection{The local H\"older estimate for kinetic Fokker-Planck
  equations}
\label{ss:holder}

We recall in this subsection the main result from \cite{wz09,gimv} as
stated in \cite{gimv} and used in the proof of
Theorem~\ref{theo:main-holder}. Consider the equation
\begin{equation}\label{e:lin-div}
  \partial_t g + v \cdot \nabla_x g =
  \nabla_v( A \nabla_v g) + S_0 \quad \text{ in } \quad Q_{2r}(z_0)
\end{equation}
with $A=A(t,x,v)$ real symmetric matrix whose eigenvalues lie in
$[\lambda,\Lambda]$ for $0 < \lambda \le \Lambda$.

\begin{theorem}[Local H\"older estimate]\label{t:gimv}
  Given a cylinder $Q_{2r}(z_0) \subset \R \times \R^d \times \R^d$
  and a function $S_0$ essentially bounded in $Q_{2r}(z_0)$, any weak
  solution $g$ of \eqref{e:lin-div} in $Q_{2r}(z_0)$ such that
  $g \in L^\infty_t([0,T], L^2_{x,v} (\R^d \times \R^d)) \cap L^2_{t,x} ([0,T]
  \times \R^d, H^1_v (\R^d))$ satisfies
  \[
    \| g\|_{\cC_\ell^{{\alpha_0}}(Q_{\frac{3r}2}(z_0))} \le C
    r^{-{\alpha_0}} \| g \|_{L^2(Q_{2r}(z_0))} + C r^{-2-{\alpha_0}}
    \|S_0\|_{L^\infty(Q_{2r}(z_0))}
  \]
  for some constants $\alpha_0 \in (0,1)$ and $C>0$ only depending on
  $d$ and $\lambda,\Lambda$.
\end{theorem}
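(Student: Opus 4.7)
The plan is to follow the De Giorgi--Nash--Moser strategy for divergence-form equations, adapted to the kinetic (hypoelliptic) setting as in \cite{wz09,gimv}. By the kinetic scaling $(t,x,v) \mapsto (r^2t, r^3 x, rv)$ preserving the structure of \eqref{e:lin-div}, and Galilean translation $z \mapsto z_0 \circ z$, I first reduce to the canonical situation $z_0=0$, $r=1$. I would also normalize so that $\|g\|_{L^2(Q_2)} + \|S_0\|_{L^\infty(Q_2)} \le 1$, so that it suffices to prove $[g]_{\cC_\ell^{\alpha_0}(Q_{3/2})} \lesssim 1$. The whole argument rests on getting an oscillation decay: there exists $\theta \in (0,1)$ with
\[
\operatorname{osc}_{Q_{r/2}(z_0)} g \le (1-\theta) \operatorname{osc}_{Q_{r}(z_0)} g + Cr^{\alpha_0} \|S_0\|_{L^\infty},
\]
which iterated over dyadic scales yields the H\"older estimate in the standard way (via Campanato's approach).

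The first main ingredient is a local $L^\infty$ bound (De Giorgi's first lemma): if $g$ is a nonnegative subsolution of the equation in $Q_2$, then $\|g\|_{L^\infty(Q_{3/2})} \lesssim \|g\|_{L^2(Q_2)} + \|S_0\|_{L^\infty(Q_2)}$. I would prove this by testing the equation against $(g-k_n)_+ \chi_n^2$ for an increasing sequence of levels $k_n \nearrow k_\infty$ and a decreasing sequence of cutoffs $\chi_n$ supported in shrinking cylinders $Q_{3/2+2^{-n}}$. The ellipticity of $A$ gives a Caccioppoli-type energy inequality controlling $\|\nabla_v (g-k_n)_+ \chi_n\|_{L^2}^2$ and $\sup_t \|(g-k_n)_+ \chi_n\|_{L^2}^2$ by lower-order terms involving $|\chi_n|$, $|\nabla_v \chi_n|$, and $(\partial_t + v\cdot \nabla_x)\chi_n^2$. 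The sequence of ``energy levels'' $U_n := \|(g-k_n)_+ \chi_n\|_{L^2}^2$ then satisfies a nonlinear recursion $U_{n+1} \le C^n U_n^{1+\beta}$ for some $\beta>0$, which forces $U_\infty=0$ provided $U_0$ is small enough; this is where a gain of integrability comes in, and in the kinetic setting it is provided by a hypoelliptic gain (Bouchut's regularity or a kinetic Sobolev embedding combining the $H^1_v$ control and the transport equation on $(g-k_n)_+$).

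The second and genuinely hardest ingredient is an intermediate-value / measure-to-pointwise lemma: if $0 \le g \le 1$ is a subsolution in $Q_2$ and the set $\{g \ge 1/2\} \cap Q_1$ has measure at least $\delta > 0$, then $g \ge \mu > 0$ somewhere in $Q_{1/2}$, quantitatively. The main obstacle here is the coupling between the $v$-diffusion and the $x$-transport: in the elliptic/parabolic case one uses an isoperimetric-type inequality of De Giorgi comparing the measures of $\{g \le 1/4\}$, $\{g \ge 1/2\}$ and the ``intermediate'' set, combined with a Poincar\'e inequality. Here one only has ellipticity in $v$, so the Poincar\'e step only yields control at fixed $(t,x)$; to spread this information in $x$ one must exploit the transport term, i.e., use a velocity averaging / hypoelliptic regularization lemma that upgrades $L^2_{t,x,v}$ information into some modulus of continuity of averages in $v$, together with the bound on $\nabla_v g$ from the energy estimate. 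Technically this is implemented through a kinetic version of the De Giorgi isoperimetric lemma proven in \cite{gimv}, and it is the step where the hypoelliptic structure of $\partial_t + v\cdot\nabla_x - \operatorname{div}_v(A\nabla_v)$ is essential.

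With the $L^\infty$ bound and the measure-to-pointwise lemma in hand, the oscillation decay follows by a standard dichotomy: given the subsolution $\tilde g := (M - g)/(M-m)$ where $m,M$ bracket the oscillation of $g$ on $Q_1$, either $\{\tilde g \ge 1/2\}$ or $\{1-\tilde g \ge 1/2\}$ occupies at least half of $Q_1$; applying the intermediate value lemma (possibly after a finite number of iterations to make the measure threshold $\delta$ effective) to the appropriate one gives a quantitative gain on either $\sup_{Q_{1/2}} g$ or $\inf_{Q_{1/2}} g$, hence reduces the oscillation by a factor $(1-\theta)$. Unscaling gives the dyadic decay displayed above, from which the $\cC^{\alpha_0}_\ell$ estimate at any base point $z_0 \in Q_{3r/2}$ is immediate. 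The contributions of the source $S_0$ are tracked throughout in the energy estimates and the intermediate value lemma, leading to the $r^{-2-\alpha_0}\|S_0\|_{L^\infty}$ term in the final bound.
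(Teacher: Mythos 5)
The paper does not prove this statement at all: Theorem~\ref{t:gimv} is recalled verbatim from \cite{wz09,gimv} and used as a black box, so the only ``proof'' the paper offers is a citation. Your outline is faithful to the strategy actually used in those references (De Giorgi's method adapted to the hypoelliptic operator $\partial_t + v\cdot\nabla_x - \nabla_v\cdot(A\nabla_v)$): reduction by kinetic scaling and Galilean translation, a Caccioppoli/De Giorgi iteration giving the local $L^2\to L^\infty$ bound, an intermediate-value (measure-to-pointwise) lemma, and the usual oscillation-decay iteration. So as a roadmap it is the right one, and it correctly identifies where the hypoelliptic structure enters.

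However, as a proof it has genuine gaps exactly at the two places that constitute the content of \cite{gimv}. First, the gain of integrability closing the nonlinear recursion $U_{n+1}\le C^n U_n^{1+\beta}$ is not automatic: the energy estimate only controls $\nabla_v(g-k)_+$, and the truncations $(g-k)_+$ satisfy the equation only as subsolutions with error terms, so the ``kinetic Sobolev embedding'' you invoke must be proved for this weak class (in \cite{gimv} this is done through averaging/hypoelliptic estimates applied to the level sets, with careful bookkeeping of the cutoffs); simply citing Bouchut's $L^2$ regularity for exact solutions does not apply to $(g-k)_+\chi_n$. Second, and more seriously, the intermediate-value lemma is not obtained in \cite{gimv} by a direct kinetic analogue of De Giorgi's isoperimetric inequality combined with a Poincar\'e inequality in $v$, as your sketch suggests; it is proved by a compactness/contradiction argument relying on velocity averaging, which yields a non-explicit constant, and making the ``spreading in $x$ through the transport term'' quantitative is a substantial piece of work (done only in later papers). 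Since these two lemmas are precisely what makes the theorem nontrivial, your proposal is an accurate summary of the known proof rather than a proof; within this paper the appropriate resolution is the one the authors take, namely to quote \cite{wz09,gimv}. A minor additional point: the unscaling step should be checked quantitatively, since the $L^2$ norm over $Q_{2r}$ carries a Jacobian factor $r^{(4d+2)/2}$ under the kinetic scaling, so matching the exact powers $r^{-\alpha_0}$ and $r^{-2-\alpha_0}$ in the displayed estimate requires tracking the normalization of the $L^2$ norm used in the cited statement.
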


\subsection{The Schauder estimate for the toy model}
\label{sec:scha-estim-toy}

The Schauder estimate follows from (1) the H\"older regularity
established in \cite{wz09,gimv}, (2) the Schauder estimate from
Section~\ref{sec:schauder} and (3) the Gaussian bounds from the
previous subsection. Since the H\"older regularity only holds for
positive times, we consider some time interval $[\tau,T]$ for some
arbitrary but fixed $\tau>0$.

\begin{proposition}[Higher order H\"older  Estimates]\label{prop:local}
  There exists ${\alpha_0} \in (0,1)$, only depending on the dimension
  $d$ and $C_1,C_2$, and $C>0$ only depending on
  $C_1,C_2,d, \tau,\delta$ such that for all solution
  $g \in \cC_\ell^{2+{\alpha_0}}$ to \eqref{eq:toyagain} that
  satisfies $C_1 \sqrt \mu \le g \le C_2 \sqrt \mu$, for all
  $\tau \in (0,T)$ and $\delta \in (0,1)$, we have
  \begin{equation}
    \label{eq:decay}
    \| g \|_{\cC_\ell^{2+{\alpha_0}}(Q_r(z_0))} \le C \mu^\delta (v_0).
 \end{equation}
  for any 
  $Q_{2r} (z_0) \subset [\tau, T] \times \T^d \times \R^d$. In
  particular,
 \begin{equation}
   \label{e:decay-grad}
   \| \mu^{-\delta} \nabla_v g \|_{L^\infty([\tau,T] \times \R^d \times \R^d)} \le C.
 \end{equation}
\end{proposition}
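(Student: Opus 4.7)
The goal is to combine the Gaussian pointwise bounds of Lemma~\ref{l:gaussian}, the De Giorgi--Nash--Moser-type H\"older regularity (Theorem~\ref{t:gimv}), and the local Schauder estimate (Theorem~\ref{thm:variable-loc}) through a Galilean rescaling centered at $z_0$ whose normalization encodes the Gaussian decay factor $\mu^{\delta}(v_0)$.

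Since $\fR[g]$ is independent of $v$, equation~\eqref{eq:toyagain} can be rewritten in divergence form,
\[
  (\partial_t + v\cdot\nabla_x) g \;=\; \nabla_v\cdot\bigl(\fR[g]\,\nabla_v g\bigr) + \fR[g]\Bigl(\tfrac{d}{2} - \tfrac{|v|^2}{4}\Bigr) g,
\]
and Lemma~\ref{l:gaussian} gives $C_1 \le \fR[g] \le C_2$, hence uniform ellipticity in~$v$. Fix $z_0=(t_0,x_0,v_0)$ with $Q_{2r}(z_0)\subset [\tau,T]\times\T^d\times\R^d$, pick $c_\star=c_\star(\delta)>0$ small, set $\rho := \min\bigl(r,\;c_\star/(1+|v_0|)\bigr)$, and introduce the rescaled function
\[
  \tilde g(z) := \frac{g\bigl(z_0 \circ S_\rho(z)\bigr)}{M},\qquad M := \sqrt{\mu(v_0)}\,e^{\rho|v_0|},
\]
chosen so that $\|\tilde g\|_{L^\infty(Q_2)} \lesssim 1$ via $g \le C_2\sqrt{\mu}$. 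Using Galilean invariance and the kinetic scaling, $\tilde g$ satisfies
\[
  (\partial_t + v\cdot\nabla_x)\tilde g \;=\; \nabla_v\cdot(\tilde A\,\nabla_v\tilde g) + \tilde c\,\tilde g \qquad\text{in } Q_2,
\]
with $\tilde A(z) = \fR[g]\bigl(z_0\circ S_\rho(z)\bigr)\,\mathrm{Id}$ taking values in $[C_1,C_2]\mathrm{Id}$, and $\tilde c(z) = \rho^{2}\fR[g]\bigl(\tfrac{d}{2} - \tfrac{|v_0+\rho v|^{2}}{4}\bigr)$ uniformly bounded on $Q_2$ thanks to $\rho(1+|v_0|) \le c_\star$.

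Apply Theorem~\ref{t:gimv} to $\tilde g$ on $Q_2$ (the source $\tilde c\,\tilde g$ is bounded) to obtain $\|\tilde g\|_{\cC_\ell^{\alpha_0}(Q_{3/2})} \le C$. Transferring back through $g(\bar z) = M\,\tilde g(z)$ yields a pointwise H\"older bound on $g$; integrating this against $\sqrt{\mu}$ in~$v$ (the Gaussian decay provides dominated convergence) shows that $\fR[g]$ is $\alpha_0$-H\"older in $(t,x)$ with norm independent of $v_0$. Consequently $\tilde A \in \cC_\ell^{\alpha_0}(Q_{3/2})$ with uniform bound, and the local Schauder estimate (Theorem~\ref{thm:variable-loc}) produces $\|\tilde g\|_{\cC_\ell^{2+\alpha_0}(Q_1)} \le C$ for a constant depending only on $C_1,C_2,d,\tau,\delta$. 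Undoing the $S_\rho$-rescaling and normalization translates this to $\|g\|_{\cC_\ell^{2+\alpha_0}(Q_\rho(z_0))} \le C\,M$, and the choice of $c_\star(\delta)$ makes $M \le C_\delta\,\mu^{\delta}(v_0)$. When $\rho<r$ one covers $Q_r(z_0)$ by finitely many cylinders $Q_\rho(z_i)$ with $|v_i - v_0| \le r$ (so that the normalizations $M_i$ are all comparable to~$M$) and patches the local estimates via the Taylor-remainder viewpoint on $\cC_\ell^{2+\alpha_0}$ from Lemma~\ref{lem:NN0}. The bound~\eqref{e:decay-grad} then follows from~\eqref{eq:taylor-pw}, which controls $\|\nabla_v g\|_{L^\infty}$ by $\|g\|_{\cC_\ell^{2+\alpha_0}}$.

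The principal obstacle is closing the bootstrap loop: the Schauder estimate requires H\"older regularity of the coefficient $\fR[g]$, which itself needs H\"older regularity of $g$. The loop closes because Theorem~\ref{t:gimv} needs only $L^\infty$-type bounds on the rescaled equation to deliver the initial $\cC_\ell^{\alpha_0}$ regularity; the Gaussian decay (inherited through $\tilde g$) then justifies the integration in $v$ that promotes pointwise H\"older estimates of $g$ into a H\"older estimate for~$\fR[g]$. A secondary issue is the patching argument when $r$ is of order one but $|v_0|$ is large, where the rescaling scale $\rho$ depends on $v_0$; it is handled by the polynomial-approximation definition of $\cC_\ell^{2+\alpha_0}$, which behaves well under covering arguments whose cylinders have comparable normalization factors.
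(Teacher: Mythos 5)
Your proposal is correct in its overall skeleton and uses the same three ingredients in the same order as the paper: the Gaussian barriers of Lemma~\ref{l:gaussian} give ellipticity $C_1\le\fR[g]\le C_2$ and decay, Theorem~\ref{t:gimv} gives the initial $\cC_\ell^{\alpha_0}$ regularity, integration in $v$ against $\sqrt{\mu}$ upgrades this to H\"older regularity of the coefficient $\fR[g]$ in $(t,x)$, the local Schauder estimate of Theorem~\ref{thm:variable-loc} then gives $\cC_\ell^{2+\alpha_0}$, and the gradient bound follows by interpolation (Lemma~\ref{l:interpol}, or equivalently the pointwise bound~\eqref{eq:taylor-pw}). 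Where you genuinely diverge is in how the unbounded zero-order term and the Gaussian weight are handled. The paper does \emph{not} rescale: it treats the whole term $\fR[g]\bigl(\tfrac d2-\tfrac{|v|^2}4\bigr)g$ as a \emph{source} $S_0$, whose $L^\infty$ and then $\cC_\ell^{\alpha_0}$ norms on $Q_{2r}(z_0)$ carry the weight $\mu^{\delta}(v_0)$ directly from $g\le C_2\sqrt\mu$ and from the weighted $\cC_\ell^{\alpha_0}$ bound; both Theorem~\ref{t:gimv} and Theorem~\ref{thm:variable-loc} are applied on the original cylinders $Q_{2r}(z_0)$, $Q_r(z_0)$, so no covering is needed. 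You instead perform a Galilean rescaling at scale $\rho\sim(1+|v_0|)^{-1}$ with normalization $M\sim\sqrt{\mu(v_0)}$ to make the zero-order coefficient bounded and the solution of size one; this is a perfectly viable alternative (it is the standard device when one cannot exploit decay of the solution itself), but it forces the extra covering/patching step and the bookkeeping that your sketch partially elides.

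Two points in that bookkeeping need repair, though neither is fatal. First, undoing the rescaling gives $[g]_{\cC_\ell^{2+\alpha_0}(Q_\rho(z_0))}\le C\,M\,\rho^{-(2+\alpha_0)}$, not $C\,M$; the extra factor is polynomial in $(1+|v_0|)$ and must be (and can be) absorbed by lowering $\delta$. Second, the claim that the normalizations $M_i\sim\sqrt{\mu(v_i)}$ over a covering of $Q_r(z_0)$ with $|v_i-v_0|\le r$ are ``comparable to $M$'' is false when $r$ is of order one and $|v_0|$ is large: $M_i/M$ can be as large as $e^{Cr|v_0|}$. This too can be absorbed into $\mu^{\delta}(v_0)$ (for $\delta$ strictly below $\tfrac12$, using that $r$ is bounded since $Q_{2r}(z_0)\subset[\tau,T]\times\T^d\times\R^d$), which is exactly the absorption mechanism the paper uses for its own sup over $Q_{2r}(z_0)$; but you should state it as an absorption, not a comparability, and carry the resulting $(1+|v_0|)$- and $e^{Cr|v_0|}$-losses explicitly through the patching of the $\cC_\ell^{2+\alpha_0}$ seminorm (most easily done through the derivative characterization of Remark~\ref{rem:equiv} rather than directly through Lemma~\ref{lem:NN0}). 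With these corrections your route closes; the paper's route buys a shorter proof by never leaving the cylinders $Q_r(z_0)$, at the price of having to control the weighted H\"older norm of the source term, while yours buys uniformly elliptic, uniformly bounded rescaled problems at the price of the covering argument.
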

\begin{proof}
  We apply Theorem~\ref{t:gimv} with $\lambda = C_1$, $\Lambda = C_2$,
  $A = \fR[g] \mbox{Id}$ and $S_0 = \fR[g](\frac{d}2-\frac{|v|^2}4)g$ and get
  \begin{align*}
    \| g\|_{\cC_\ell^{{\alpha_0}}(Q_{\frac{3r}2}(z_0))}  
    \le C r^{-{\alpha_0}} \| g \|_{L^2(Q_{2r}(z_0))}  + C
    r^{-2-{\alpha_0}}
    \left\|\left(\frac{d}2 - \frac{|v|^2}4\right)g \right\|_{L^\infty(Q_{2r}(z_0))}
  \end{align*}
  for some $\alpha_0 \in (0,1)$. Apply now Lemma~\ref{l:gaussian} to
  get for an arbitrary $\delta \in (0,1)$
  \begin{align*}
    \| g\|_{\cC_\ell^{{\alpha_0}}(Q_{\frac{3r}2}(z_0))} \le C_\delta \mu^{\delta}(v_0)
  \end{align*}
  for some constant $C_\delta$ depending only on
  $C_1, C_2, d,\tau, \delta$. Note that the role of the time $\tau>0$
  is to ensure that the constant $C_\delta$ only depends on $\tau$ and
  not on $r$: it gives some ``room'' around a point
  $z_0 \in [\tau,T] \times \T^d \times \R^d$.
  
  This implies in turn that for all $(t,x) \in [\tau,T] \times \T^d$,
  \begin{align*}
    |\fR[g](t,x) - \fR[g](s,y)|
    & \le \int_{\R^d} | g (t,x,v) - g (s,y,v)|\mu^{\frac12} (v) \dd v \\
    & \le C_\delta  \int_{\R^d} \left(|t-s|^{\frac{\alpha_0}2}
      + |x-y - (t-s)v|^{\frac{\alpha_0}3} \right) \mu^{\frac12+\delta} (v) \dd v \\
    & \lesssim C_\delta \left(|t-s|^{\frac{\alpha_0}2}
      + |x-y|^{\frac{\alpha_0}3} + |t-s|^{\frac{\alpha_0}3}\right).
  \end{align*}

  This also implies for any $\delta' \in (0,\delta)$
  \[
    \left\|\left(\frac{d}2-\frac{|v|^2}4\right) g\right\|
    _{\cC_\ell^{{\alpha_0}}(Q_{\frac{3r}2}(z_0))}  \le
    C_\delta \mu^{\delta'}(v_0) .
  \]

  This ensures the H\"older regularity of the coefficients and source
  term, and we are thus in a position to apply
  Theorem~\ref{thm:variable-loc} in the cylinders $Q_r (z_0)$
  and deduce \eqref{eq:decay}. To get \eqref{e:decay-grad} from
  \eqref{eq:decay} and the decay of Lemma~\ref{l:gaussian}, apply
  Lemma~\ref{l:interpol} on a cylinder $Q$ to obtain
  $\| \nabla_v g \|_{L^\infty(Q)} \le C \|g\|_{L^\infty(Q)} + C [
  g]_{\cC_\ell^{2+\alpha_0}(Q)}$ and argue as before. This
  achieves the proof of the lemma.
\end{proof}

\subsection{Standard interpolation product inequality}
\label{sec:stand-interp-prod}

We recall and prove an interpolation inequality tailored to our needs;
it is folklore knowledge.
\begin{lemma}\label{lem:product}
  Consider $k \ge d/2$, two functions $g_1, g_2 \in H^k(\T^d)$ and
  ${\bar m}, m \in \N^d$ such that  $|{\bar m}| + |m| = k$
  then
  \begin{equation*}
    \left\| \partial^{\bar m}_x g_1 \partial_x ^m g_2 \right\|_{L^2(\T^d)}
      \lesssim_k \| g_1 \|_{L^\infty(\T^d)} \| g_2 \|_{H^k(\T^d)} +
      \| g_1 \|_{H^k(\T^d)} \| g_2 \|_{L^\infty(\T^d)}.
  \end{equation*}
  Moreover whenever ${\bar m} \not =0$, for any $\eps>0$ there
  is $C_\eps>0$ s.t.
  \begin{equation}\label{eq:producteps}
    \left\| \partial^{\bar m}_x g_1 \partial_x ^m g_2 \right\|_{L^2(\T^d)}
      \le \eps \| g_1 \|_{L^\infty(\T^d)} \| g_2 \|_{H^k(\T^d)} +
      C_\eps  \| g_1 \|_{H^k(\T^d)} \| g_2 \|_{L^\infty(\T^d)}.
  \end{equation}
\end{lemma}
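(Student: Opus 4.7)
The plan is to apply a standard Moser-type product argument: Gagliardo--Nirenberg interpolation on each factor, combined by Hölder's inequality, and finally converted from a multiplicative to an additive form by weighted arithmetic--geometric mean. The constraint $|\bar m|+|m|=k$ is precisely what makes the Hölder exponents match, and the exponent $k \ge d/2$ is used so that the top endpoint $L^\infty$ in the Gagliardo--Nirenberg scale is an admissible interpolation endpoint for $H^k$.

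Concretely, first I would set $\theta := |\bar m|/k \in [0,1]$, so that $1-\theta = |m|/k$. When $|\bar m|,|m| \ge 1$, I would invoke the Gagliardo--Nirenberg inequality on $\T^d$ with $p_1 = 2k/|\bar m|$ and $p_2 = 2k/|m|$ to obtain
\[
\|\partial^{\bar m}_x g_1\|_{L^{p_1}} \lesssim \|g_1\|_{L^\infty}^{1-\theta} \|g_1\|_{H^k}^{\theta}, \qquad \|\partial^{m}_x g_2\|_{L^{p_2}} \lesssim \|g_2\|_{L^\infty}^{\theta} \|g_2\|_{H^k}^{1-\theta}.
\]
Since $1/p_1 + 1/p_2 = 1/2$, Hölder's inequality yields
\[
\|\partial^{\bar m}_x g_1 \, \partial^{m}_x g_2\|_{L^2} \lesssim \bigl(\|g_1\|_{L^\infty}\|g_2\|_{H^k}\bigr)^{1-\theta} \bigl(\|g_1\|_{H^k}\|g_2\|_{L^\infty}\bigr)^{\theta},
\]
and the weighted arithmetic--geometric mean inequality $a^{1-\theta}b^{\theta} \le (1-\theta)a + \theta b$ gives the first claim. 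The edge cases $|\bar m|=0$ or $|m|=0$ are handled directly by pulling out the undifferentiated factor in $L^\infty$ and estimating the other factor in $L^2$.

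For the $\eps$-refined inequality \eqref{eq:producteps}, the key is that $\bar m \neq 0$ forces $\theta > 0$, so a free parameter can be inserted into the AM--GM step. Precisely, I would use
\[
a^{1-\theta}b^{\theta} = (\lambda a)^{1-\theta}\bigl(\lambda^{-(1-\theta)/\theta} b\bigr)^{\theta} \le (1-\theta)\lambda\, a + \theta \lambda^{-(1-\theta)/\theta} b
\]
and choose $\lambda := \eps/(1-\theta)$, which makes the coefficient of $a = \|g_1\|_{L^\infty}\|g_2\|_{H^k}$ exactly $\eps$ and produces a constant $C_\eps$ in front of $b = \|g_1\|_{H^k}\|g_2\|_{L^\infty}$.

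The only potentially delicate point is the admissibility of the Gagliardo--Nirenberg exponents at the endpoints: $p_1,p_2 \in [2,\infty]$ holds automatically from $|\bar m|,|m|\le k$, and the $L^\infty$ endpoint of the interpolation is handled by $H^k \hookrightarrow L^\infty$ when $k>d/2$ (the case $k=d/2$ is covered by the standard Brezis--Gallouet--Nirenberg form of the inequality, which still gives the required bound with the norms on the right-hand side replaced by themselves up to a logarithmic factor absorbed into the implicit constant). This is in any case not an obstacle in the application of the lemma, since Theorem~\ref{theo:main-holder} uses $k > 2 + d/2$.
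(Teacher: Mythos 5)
Your proof is essentially the paper's own argument: Hölder's inequality with the conjugate exponents $2k/|\bar m|$ and $2k/|m|$, the Gagliardo--Nirenberg interpolation $\|\partial^{\bar m}_x h\|_{L^{2k/|\bar m|}} \lesssim \|h\|_{L^\infty}^{1-|\bar m|/k}\|h\|_{H^k}^{|\bar m|/k}$ (called a Nash inequality in the paper) applied to both factors, and then a weighted Young/AM--GM step, which you make explicit for the $\eps$-version \eqref{eq:producteps}. The only superfluous point is your endpoint discussion: since the $L^\infty$ norm sits on the right-hand side of the interpolation inequality, no relation between $k$ and $d$ (hence no Brezis--Gallouet correction, which in any case could not be absorbed into a constant) is actually needed there.
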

\begin{proof}
  The first inequality is clear when $\bar m=0$. Use
    otherwise the \emph{Nash inequality}: given $h \in H^k(\T^d)$
  with $k >d/2$ and any ${\bar m} \in \N^d$, with
  $0 < |{\bar m}| \le k$ then for
  $p := \frac{2k}{|{\bar m}|} \in [1,+\infty]$ one has
  \begin{equation*}
    \left\| \partial^{\bar m}_x h \right\|_{L^p(\T^d)} \lesssim \| h
    \|_{L^\infty(\T^d)} ^{1-\frac{|{\bar m}|}k} \| h \|_{H^k(\T^d)}
    ^{\frac{|{\bar m}|}k}. 
  \end{equation*}

  Apply successively the H\"older inequality with
  $p := \frac{2k}{|{\bar m}|}$ and $q := \frac{2k}{|m|}$, and Nash
  inequality above:
  \begin{align*}
    \left\| \partial^{\bar m}_x g_1 \partial_x ^m g_2 \right\|_{L^2(\T^d)}
    & \le \left\| \partial^{\bar m}_x g_1 \right\|_{L^p(\T^d)}  \left\|
      \partial^{\bar m}_x g_2 \right\|_{L^q(\T^d)} 
     \lesssim \| g_1\|_{L^\infty(\T^d)} ^{1-\frac{|{\bar m}|}{k}}
      \| g_1 \|_{H^k(\T^d)}^{\frac{|{\bar m}|}k} \| g_2
    \|_{L^\infty(\T^d)} ^{1-\frac{|m|}k} \| g_2 \|_{H^k(\T^d)}
      ^{\frac{|m|}k} \\
    & \lesssim \left( \| g_1\|_{L^\infty(\T^d)} \| g_2
      \|_{H^k(\T^d)}\right)^{1-\frac{|{\bar m}|}k}
      \left( \| g_1 \|_{H^k(\T^d)} \| g_2\|_{L^\infty(\T^d)}
      \right)^{\frac{|{\bar m}|}k}.
  \end{align*}
  Use finally
  $a^{1-\lambda} b^\lambda \le (1-\lambda)a + \lambda b$ to deduce the
  two claimed inequalities.
\end{proof}

\subsection{Local well-posedness in Sobolev spaces}
\label{sec:local-wellp-sobol}

We prove that equation~\eqref{eq:main} is locally well-posed in
$H^k (\T^d \times \R^d)$. It is convenient to work with the
following norm,
\begin{align*}
  \| g \|_{H^{k,\bar k}_{x,v}(\T^d \times \R^d)} ^2 := \| g \|_{L^2(\T^d
  \times \R^d)} ^2 + \sum_{i=1,\dots,d} \left\| \partial_{x_i} ^k
  h\right\|_{L^2(\T^d \times \R^d)}^2
  + \sum_{i=1,\dots,d} \left\| \partial_{v_i} ^{\bar k}
  h\right\|_{L^2(\T^d \times \R^d)}^2. 
\end{align*}
Note that this norm agrees with the usual $H^k$ norm in $x$ and $v$
when $k = \bar k$: this is clear from the Fourier-transformed
definition of these norms. However for $k \not = \bar k$ the space
$H^{k,\bar k}_{x,v}$ includes but differs from
$H^{\min(k,\bar k)}_{x,v}$. We use this intermediate definition of
norm solely to shorten some calculations.
\begin{theorem}[Local well-posedness in $H^k$]\label{t:loc-exis}
  Consider $g_{in} \in H^{k,\bar k}_{x,v}(\T^d \times \R^d)$ with
  $k,{\bar k}$ non-negative integers s.t. $2 \le{\bar k} \le k$ and
  $k > d/2$, and $C_1 \sqrt{\mu} \le g_{in} \le C_2 \sqrt{\mu}$ for
  $0 < C_1 < C_2$.  Then there is $T>0$ depending only on $C_1, C_2$
  and $\|g_{in}\|_{H^{k,\bar k}_{x,v}(\T^d \times \R^d)}$ such that
  there exists a unique strong solution
  $g \in C^1([0,T],H^{k,\bar k}_{x,v}(\T^d \times \R^d))$ to
  \eqref{eq:toyagain} with initial data $g_{in}$, which furthermore
  satisfies $C_1 \sqrt{\mu} \le g (t,\cdot,\cdot) \le C_2 \sqrt{\mu}$
  for almost all $t \in [0,T]$.
\end{theorem}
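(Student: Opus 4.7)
The plan is to construct the solution by a Picard iteration in the convex set $X_T := \{ g \in L^\infty([0,T]; H^{k,\bar k}_{x,v}(\T^d \times \R^d)) : C_1\sqrt{\mu} \le g \le C_2\sqrt{\mu}\}$. Setting $g^{(0)}(t,x,v) := g_{in}(x,v)$, I would then, given $g^{(n)} \in X_T$, define $g^{(n+1)}$ as the strong solution of the \emph{linear} kinetic Fokker-Planck equation
\[
  \partial_t g^{(n+1)} + v \cdot \nabla_x g^{(n+1)} = \fR[g^{(n)}]\, U[g^{(n+1)}], \qquad g^{(n+1)}(0,\cdot,\cdot) = g_{in}.
\]
Since $g^{(n)}$ satisfies the Gaussian pinch, we have $C_1 \le \fR[g^{(n)}] \le C_2$, so this is a uniformly parabolic linear equation with coefficients independent of $v$; its well-posedness in $H^{k,\bar k}_{x,v}$ follows from a Galerkin approximation in $v$ combined with transport theory in $(t,x)$. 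Lemma~\ref{l:gaussian} then propagates the Gaussian bounds to $g^{(n+1)}$, hence $g^{(n+1)} \in X_T$.

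\textbf{Uniform $H^{k,\bar k}$ estimates.} The core step is a uniform-in-$n$ a priori bound on a small interval $[0,T]$. For $|\alpha|=k$ apply $\partial_x^\alpha$, test against $\partial_x^\alpha g^{(n+1)}$, and integrate; the transport term integrates to zero on $\T^d\times\R^d$, while the leading nonlinear term yields the coercive contribution
\[
  \int \fR[g^{(n)}]\,\partial_x^\alpha g^{(n+1)}\,U[\partial_x^\alpha g^{(n+1)}]\,dx\,dv
  \;=\; -\int \fR[g^{(n)}]\,\mu\,\bigl|\nabla_v\bigl(\mu^{-1/2}\partial_x^\alpha g^{(n+1)}\bigr)\bigr|^2\,dx\,dv \le 0,
\]
by the factorization $U[h]=\mu^{-1/2}\nabla_v(\mu\nabla_v(\mu^{-1/2}h))$ and $\fR[g^{(n)}]\ge C_1$. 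The remaining Leibniz pieces of the form $(\partial_x^{\bar m}\fR[g^{(n)}])(\partial_x^{m'} U[g^{(n+1)}])$ with $\bar m + m' = \alpha$, $\bar m\ne 0$, are estimated in $L^2$ through Lemma~\ref{lem:product}, and the unbounded factor $|v|^2/4$ in $U$ is absorbed using the pointwise Gaussian bound $g^{(n+1)}\le C_2\sqrt\mu$ combined with $v$-weight integration of the $\sqrt\mu$ appearing in $\fR$. For $|\beta|\le\bar k$ I would apply $\partial_v^\beta$ and account for the commutator $[\partial_v^\beta, v\cdot\nabla_x]$, which produces $x$-derivatives of $g^{(n+1)}$ of strictly lower order in $v$; these are absorbed into the $H^{k,\bar k}$ energy precisely because $\bar k\le k$. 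Summing over $|\alpha|\le k$ and $|\beta|\le \bar k$ yields a differential inequality
\[
  \frac{d}{dt}\mathcal{E}_{n+1}(t) \le C_\star\bigl(1+\mathcal{E}_n(t)\bigr)\mathcal{E}_{n+1}(t) + C_\star,
\]
with $C_\star$ depending only on $C_1,C_2,d,k$. A standard continuity argument then gives a uniform bound $\sup_n \|g^{(n)}\|_{L^\infty([0,T];H^{k,\bar k})} \le 2\|g_{in}\|_{H^{k,\bar k}}$ on a time $T>0$ depending only on $C_1,C_2$ and $\|g_{in}\|_{H^{k,\bar k}}$.

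\textbf{Convergence and uniqueness.} To extract a limit, I would show $(g^{(n)})_n$ is Cauchy in $L^\infty([0,T];L^2(\T^d\times\R^d))$. The difference $h^{(n)}:=g^{(n+1)}-g^{(n)}$ solves
\[
  \partial_t h^{(n)} + v\cdot\nabla_x h^{(n)} = \fR[g^{(n)}]\,U[h^{(n)}] + \fR[h^{(n-1)}]\,U[g^{(n)}],
\]
and an $L^2$ energy estimate, using once more the coercivity of $U$ against $h^{(n)}$ and bounding the forcing by $\|\fR[h^{(n-1)}]\|_{L^\infty_x}\,\|U[g^{(n)}]\|_{L^2_{x,v}} \lesssim \|h^{(n-1)}\|_{L^2} (1+\|g^{(n)}\|_{H^{0,2}})$ via the Sobolev embedding $H^k_x \hookrightarrow L^\infty_x$ and the Gaussian pinch, gives $\|h^{(n)}\|_{L^\infty_t L^2} \le \tfrac12 \|h^{(n-1)}\|_{L^\infty_t L^2}$ after further reducing $T$. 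The limit $g$ exists in $L^\infty_t L^2_{x,v}$; weak-$\ast$ compactness in $L^\infty_t H^{k,\bar k}_{x,v}$ and an interpolation argument upgrade to strong convergence in $C^0_t H^{k-1,\bar k-1}_{x,v}$, which is enough to pass to the limit in the nonlinear term and identify $g$ as a strong solution. Uniqueness follows from the same $L^2$-stability estimate applied to two solutions with the same initial datum, combined with a Grönwall argument.

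\textbf{Main obstacle.} The chief difficulty lies in closing the $H^{k,\bar k}$ energy estimate in the presence of the unbounded term $-|v|^2/4$ in $U$. The resolution combines two ingredients already present in the earlier sections: (i) the conjugated form $U[h]=\mu^{-1/2}\nabla_v(\mu\nabla_v(\mu^{-1/2}h))$ ensures the leading contribution is automatically non-positive, bypassing any integration by parts that would generate uncontrolled $v$-weights, and (ii) the pointwise Gaussian sandwich from Lemma~\ref{l:gaussian} absorbs the remaining $|v|^k$ factors produced by commutators and by differentiating the zero-order part of $U$, at the cost of a multiplicative constant depending only on $C_1,C_2,d,k$.
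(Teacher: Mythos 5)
Your overall plan is the paper's: the same linearized iteration $(\partial_t+v\cdot\nabla_x)g^{(n+1)}=\fR[g^{(n)}]\,U[g^{(n+1)}]$ started from $g_{in}$, propagation of the Gaussian sandwich by Lemma~\ref{l:gaussian} so that $C_1\le\fR[g^{(n)}]\le C_2$, a short-time $H^{k,\bar k}_{x,v}$ energy estimate closed with Lemma~\ref{lem:product} and Gronwall, and an $L^2$-type stability estimate for contraction and uniqueness (your $L^2$ contraction bound is fine, since there the weight $|v|^2$ multiplies $g^{(n)}$ itself, which the sandwich controls, and $\Delta_v g^{(n)}\in L^2$ because $\bar k\ge 2$). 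The genuine gap is in the step where you close the top-order $x$-derivative estimate. You propose to put the Leibniz remainders $(\partial_x^{\bar m}\fR[g^{(n)}])(\partial_x^{m'}U[g^{(n+1)}])$, $\bar m\neq0$, directly into $L^2$ and to absorb the unbounded factor $\tfrac{d}{2}-\tfrac{|v|^2}{4}$ by the pointwise bound $g^{(n+1)}\le C_2\sqrt\mu$. This fails twice over. First, Lemma~\ref{l:gaussian} pins down $g^{(n+1)}$ only, not its derivatives: there is no pointwise Gaussian decay for $\partial_x^{m'}g^{(n+1)}$ (nor for $\partial_v^{\bar k-1}g^{(n+1)}$ in your $v$-derivative step), so $|v|^2\partial_x^{m'}g^{(n+1)}$ is not in $L^2$ on the strength of the available bounds; the "sandwich absorbs the $|v|^k$ factors from commutators" mechanism in your last paragraph is not available. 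Second, $\partial_x^{m'}U[g^{(n+1)}]$ contains $\partial_x^{m'}\Delta_v g^{(n+1)}$ with $|m'|$ up to $k-1$, a mixed derivative controlled neither by the $H^{k,\bar k}$ energy (which only carries pure $\partial_x^k$ and pure $\partial_v^{\bar k}$) nor by the dissipation, which after conjugation is only first order in $v$, namely $-C_1\sum_{|\alpha|=k}\|\partial_x^{\alpha}h^{(n+1)}\|_{L^2}^2$ with $h^{(n+1)}=\mu^{1/2}\nabla_v(\mu^{-1/2}g^{(n+1)})$. So Cauchy--Schwarz against $\partial_x^{\alpha}g^{(n+1)}$ cannot close, whatever $\eps$ you choose in Lemma~\ref{lem:product}.

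The paper's proof removes both obstructions by one further integration by parts in $v$ \emph{before} estimating: writing $U[g_{n+1}]=\mu^{-1/2}\nabla_v\cdot(\mu^{1/2}\nabla_v h_{n+1})$ and pairing the Leibniz terms with $\partial_x^{k}g_{n+1}$, each such term becomes $\int(\partial_x^{k-q}\fR[g_n])\,\partial_x^{q}h_{n+1}\cdot\partial_x^{k}h_{n+1}$, in which only first-order conjugated $v$-derivatives appear. The top-order factor $\partial_x^{k}h_{n+1}$ is then absorbed into the coercive term via the $\eps$-form \eqref{eq:producteps} of Lemma~\ref{lem:product}, and the unbounded weight only ever multiplies $g_{n+1}$ itself, inside $h_{n+1}=\nabla_v g_{n+1}+\tfrac{v}{2}g_{n+1}$, through the integrated quantity $\int_{\R^d}\|h_{n+1}(t,\cdot,v)\|_{L^\infty_x(\T^d)}^2\dd v$, where the sandwich plus the Sobolev embedding $H^k_x\hookrightarrow L^\infty_x$ (here $k>d/2$) suffice. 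Likewise, in the $\partial_v^{\bar k}$ estimate the paper integrates by parts so that the terms where derivatives hit $|v|^2$ reduce to $\int\fR[g_n]\,|\partial_v^{\bar k-1}g_{n+1}|^2$ with bounded coefficients, rather than relying on Gaussian decay of derivatives. Your argument needs to be rewritten along these lines for the high-order estimate to close; the rest of your construction (scheme, uniform bound via Gronwall, contraction, uniqueness) then goes through as in the paper.
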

\begin{proof}
  It is standard calculations that the two a priori
  estimates~\eqref{eq:gronwall} and~\eqref{eq:difference} below imply
  the existence of solutions constructed through the iterative scheme
  \begin{equation}
    \label{eq:iterated}
    \left\{
      \begin{array}{l}
     (\partial_t  + v \cdot \nabla_x ) g_{n+1} = \mathcal{R} [g_n]
        U[g_{n+1}],\\[3mm]
        g_{n+1}(t=0,\cdot,\cdot) = g_{in} \\[3mm]
     \mbox{Initialization: }
        g_0(t,x,v) := g_{in}(x,v) \quad \forall \, t \ge 0.
      \end{array}
      \right.
    \end{equation}
    
    We now focus on establishing the key a priori estimate. Consider a
    solution
    $g_{n+1} \in C^1([0,T],H^{k,\bar k}_{x,v}(\T^d \times \R^d))$
    to~\eqref{eq:iterated} and compute successively:

\begin{equation}\label{eq:evol-2}
L^2 \mbox{ estimate: } \qquad \dt \frac{1}2  \int_{\T^d \times \R^d} | g_{n+1}|^2\dd x \dd v \le - C_1 
\int_{\T^d \times \R^d} |h_{n+1}|^2 \dd x \dd v
\end{equation}
where we denote $h_{n+1}:=\mu^{1/2} \nabla_v (\mu^{-1/2}
g_{n+1})$. Regarding the $v$-derivatives, for any integer
${\bar k} \ge 1$,
\begin{align*}
  \dt \frac12 \int_{\T^d \times \R^d} |\partial^{{\bar k}}_{v_i} g_{n+1}|^2 \dd x \dd
  v = &-  {\bar k} \int_{\T^d \times \R^d} (\partial^{{\bar k}-1} _{v_i}    \partial_{x_i} g_{n+1} ) \partial^{{\bar k}}_{v_i} g_{n+1} \dd x \dd v \\
      &- \int_{\T^d \times \R^d} \fR[g_n] \left| \nabla_v \left(\frac{\partial^{{\bar k}} _{v_i} g_{n+1}}{\sqrt \mu} \right) \right|^2 \mu  \dd x \dd v\\ 
    & + \frac14  \binom{{\bar k}}1 \int_{\T^d \times \R^d} \fR[g_n] \left| \partial^{{\bar k} - 1} _{v_i} g_{n+1} \right|^2  \dd x \dd v \\
    & + \frac12  \binom{{\bar k}}2 \int_{\T^d \times \R^d} \fR[g_n] \left|     \partial^{{\bar k}-1} _{v_i} g_{n+1} \right|^2 \dd x \dd v.
\end{align*}
In the right hand side, the first term corresponds to the transport
$v \cdot \nabla_x$, the second one to the operator $U$ since
$\fR[g_n]$ does not depend on $v$, the third term appears when one
$v$-derivative applies to $|v|^2$ and the others apply to $g_{n+1}$ in
the product $|v|^2 g_{n+1}$ appearing in $U[g_{n+1}]$, the fourth term
appears after deriving $|v|^2$ twice. Notice that integrations by
parts are used either to further differentiate $|v|^2$ or to make
appear $|\partial_{v_i}^{{\bar k}-1}g_{n+1}|^2$.  Discarding the
negative term and using the fact that $\fR[g_n] \le C_2$ thanks to
Lemma~\ref{l:gaussian}, we get after summing over $i =1,\dots,d$ and
combining with equation~\eqref{eq:evol-2}
\begin{align}\label{eq:evol-v}
  \mbox{Estimate of $v$-derivatives: } \quad
  \dt \frac12 \|g_{n+1}(t) \|^2_{H^{0,\bar k}_{x,v} (\T^d \times \R^d)}
  & \lesssim_{{\bar k},C_2}
    \left\| g_{n+1} \right\|_{H^{\bar k,\bar k}_{x,v}(\T^d \times \R^d)} ^2.
\end{align}
Regarding the $x$-derivatives, we write the equation on $g_{n+1}$ as
\[
  (\partial_t + v \cdot \nabla_x) g_{n+1} = \fR [g_n] \mu^{-\frac12}
  \nabla_v \cdot \left(\mu^{\frac12} \nabla_v h_{n+1} \right) \quad
  \mbox{ with } \quad
  h_{n+1}:= \mu^{\frac12} \,\nabla_v \left(\mu^{-\frac12} g_{n+1}\right).
\]
Since $x$-derivatives commute with the operators $v\cdot
\nabla_x$ and $U$, we have for all $i=1,\dots,d$,
\begin{align*}
 \dt \frac12 \int_{\T^d \times \R^d} |\partial^{k}_{x_i} g_{n+1}|^2 \dd x \dd v = 
 & - \int_{\T^d \times \R^d} \fR[g_n] | \partial_{x_i}^k  h_{n+1}|^2  \dd x \dd v\\ 
    &  - \sum_{0 \le q < k} \binom{k}{q} \int_{\T^d \times \R^d}
      (\partial^{k - q} _{x_i} \fR[g_n])  \partial^{q} _{x_i}
      h_{n+1} \cdot \partial^{k}_{x_i} h_{n+1} \dd x \dd v.
\end{align*}
Use now $\fR[g_n] \ge C_1$ from Lemma~\ref{l:gaussian}:
\begin{align*}
  \dt \frac12 \int_{\T^d \times \R^d} |\partial^{k}_{x_i} g_{n+1}|^2
  \dd x \dd v \lesssim_k
  & - C_1 \int_{\T^d \times \R^d}  | \partial_{x_i}^{k}h_{n+1}|^2 \dd x \dd v \\
  & + \sum_{0 \le q < k}   \int_{\T^d \times \R^d} \left|\partial^{k - q} _{x_i}
    \fR[g_n]\right| \cdot |\partial_{x_i}^{q} h_{n+1}|
    \cdot |\partial_{x_i}^{k} h_{n+1}| \dd x \dd v.
\end{align*}
Observe that, given $0 \le q < k$, the index $p:=k-q \not =0$ and the
inequality~\eqref{eq:producteps} in Lemma~\ref{lem:product} can be
applied (we use again below the upper bound $\fR[g_n] \le C_2$):
\begin{align*}
  &  \int_{\T^d \times \R^d} \left|\partial^{k-q} _{x_i}
    \fR[g_n]\right|
    \cdot |\partial_{x_i}^{q} h_{n+1}| \cdot
    |\partial_{x_i}^{k} h_{n+1}| \dd x \dd v  \\
  & = \int_{\R^d} \left( \int_{\T^d} \left|\partial^{p} _{x_i}
    \fR[g_n]\right|
    \cdot |\partial_{x_i}^{q} h_{n+1}|
    \cdot |\partial_{x_i}^{k}    h_{n+1}| \dd x \right) \dd v \\
  & \le \int_{\R^d} \left( \left\| \partial^{p} _{x_i}    \fR[g_n](t,\cdot) \partial_{x_i}^{q} h_{n+1}(t,\cdot,v) \right\|_{L^2_x(\T^d)}
    \left\|\partial_{x_i}^{k} h_{n+1}(t,\cdot,v) \right\|_{L^2_x(\T^d)}    \right) \dd v \\
  &\le \eps \int_{\R^d} \left( \left\|    \fR[g_n](t,\cdot)\right\|_{L^\infty_x(\T^d)} \left\| h_{n+1}(t,\cdot,v)    \right\|_{H^k_x(\T^d)} ^2\right) \dd v \\
  &  + C_\eps \int_{\R^d} \left( \left\|    \fR[g_n](t,\cdot)\right\|_{H^k _x(\T^d)} \left\| h_{n+1}(t,\cdot,v) \right\|_{L^\infty_x(\T^d)} \left\| h_{n+1}(t,\cdot,v)    \right\|_{H^k_x(\T^d)} \right) \dd v\\
  &\le \eps \left( C_2 +    1\right)
    \left\| h_{n+1}(t,\cdot,\cdot) \right\|_{H^{k,0}_{x,v}(\T^d \times \R^d)} ^2 
    + C_\eps '\left\|  \fR[g_n](t,\cdot)\right\|_{H^k _x(\T^d)} ^2
    \int_{\R^d} \left\| h_{n+1}(t,\cdot,v) \right\|_{L^\infty_x(\T^d)} ^2 \dd v
\end{align*}
for any $\eps >0$ and some corresponding constant
$C_\eps,C_\eps'>0$. Use then
\[\left\| \fR[g_n](t,\cdot)\right\|_{H^k _x(\T^d)} \le \left\|
  g_n(t,\cdot,\cdot)\right\|_{H^{k,0}_{x,v}(\T^d\times \R^d)}\]
and equation~\eqref{eq:evol-2} to get
\begin{align*}
  \dt \frac12 \|g_{n+1}\|^2_{H^{k,0}_{x,v}(\T^d \times \R^d)}
  \lesssim_{k,C_2}
  & - C_1 \left\|
    h_{n+1}(t,\cdot,\cdot)\right\|_{H^{k,0}_{x,v}(\T^d \times
    \R^d)} ^2  
   + \eps \left\|
    h_{n+1}(t,\cdot,\cdot)\right\|_{H^{k,0}_{x,v}(\T^d \times
    \R^d)} ^2 \\
  & + C'_\eps \left( \int_{\R^d} \left\| h_{n+1}(t,\cdot,v)
    \right\|_{L^\infty_x(\T^d)} ^2 \dd v \right) \left\|
    g_n(t,\cdot,\cdot)\right\|_{H^{k,0}_{x,v}(\T^d \times \R^d)} ^2.
\end{align*}
Finally choose $\eps$ small enough (in terms of absolute constants,
independently of the solution) to get
\begin{equation}\label{eq:evol-x}
  \dt \frac12 \|g_{n+1}\|^2_{H^{k,0}_{x,v}(\T^d \times \R^d)}
  \lesssim_{k,C_1,C_2}  \left( \int_{\R^d} \left\|
      h_{n+1}(t,\cdot,v) \right\|_{L^\infty_x(\T^d)} ^2 \dd v \right)
  \left\| g_n(t,\cdot,\cdot) \right\|_{H^{k,0}_{x,v}(\T^d \times
      \R^d)} ^2.
\end{equation}

The combination of equations~\eqref{eq:evol-v} and~\eqref{eq:evol-x}
yields
\begin{align}
  \label{eq:evol-global}\nonumber
  \dt \frac12 \left\| g_{n+1}(t,\cdot,\cdot)\right\|_{H^{k,\bar
  k}_{x,v}(\T^d \times \R^d)} ^2
  & \lesssim_{k,C_1,C_2}
    \left\| g_{n+1} (t,\cdot,\cdot) \right\|_{H^{\bar k,\bar
    k}_{x,v}(\T^d \times \R^d)} ^2 \\ 
  & + \left( \int_{\R^d} \left\| h_{n+1}(t,\cdot,v)
    \right\|_{L^\infty_x(\T^d)} ^2 \dd v \right) \left\|
    g_n(t,\cdot,\cdot)  \right\|_{H^{k,0}_{x,v}(\T^d \times
    \R^d)} ^2.
\end{align}

Now observe that $h_{n+1} = \nabla_v g_{n+1} + \frac{v}{2} g_{n+1}$
and use $g_{n+1} \le C_2 \sqrt\mu$ and $k>d/2$ and ${\bar k} \ge 1$
and Sobolev embedding in $\T^d$ to get
\begin{align*}
   \int_{\R^d} \left\|  h_{n+1}(t,\cdot,v) \right\|_{L^\infty_x(\T^d)} ^2 \dd v 
 & \lesssim  C_2^2+  \int_{\R^d} \| \nabla_v g_{n+1} \|_{L^\infty_x(\T^d)} ^2 \dd v  \\
&  \lesssim C_2^2 + \| g_{n+1} \|_{H^{k,\bar k}_{x,v}(\T^d \times \R^d)}^2
\end{align*}
and conclude finally that
\begin{equation}
  \label{eq:gronwall}
  \dt \frac12 \left\| g_{n+1}(t,\cdot,\cdot)\right\|_{H^{k,\bar
        k}_{x,v}(\T^d \times \R^d)} ^2  
  \lesssim_{k,C_1,C_2}
  \left( \left\| g_n(t,\cdot,\cdot) \right\|_{H^{k,\bar
          k}_{x,v}(\T^d \times \R^d)} ^2 + 1 \right)
  \left\| g_{n+1}(t,\cdot,\cdot) \right\|_{H^{k,\bar
        k}_{x,v}(\T^d \times \R^d)} ^2
\end{equation}
which is the first main a priori estimate, that shows that the
$H^{k,\bar k}_{x,v}(\T^d\times \R^d)$ norm remains finite on a
short time interval (whose length depends on the size of the initial
data) thanks to Gronwall's lemma.  \medskip

Regarding uniqueness, consider the difference of two solutions
$g,{\bar g} \in H^{k,\bar k}_{x,v}(\T^d \times \R^d)$ that
satisfies
\begin{equation*}
  \partial_t(g-{\bar g}) + v \cdot \nabla_x (g-{\bar g}) =
  \fR[g-{\bar g}]U[g]+ \fR[{\bar g}] U[g-{\bar g}], 
\end{equation*}
and perform similar energy estimates to get
\begin{align}
  \label{eq:difference}\nonumber
  & \dt \frac12 \left\| (g-{\bar g})(t,\cdot,\cdot)
    \right\|_{H^k_{x}H^{\bar k}_v(\T^d \times \R^d)} ^2
    \lesssim_{k,C_1,C_2} \left\| (g-{\bar g})(t,\cdot,\cdot)
    \right\|_{H^{k,\bar k}_{x,v}(\T^d \times \R^d)} ^2 \\ 
  & + \left( \left\| g(t,\cdot,\cdot)
    \right\|_{H^{k,\bar k}_{x,v}(\T^d \times \R^d)} ^2
    + \left\| {\bar g}(t,\cdot,\cdot)
    \right\|_{H^{k,\bar k}_{x,v}(\T^d \times \R^d)} ^2 \right)
    \left\| (g-{\bar g})(t,\cdot,\cdot)
    \right\|_{H^{k,\bar k}_{x,v}(\T^d \times \R^d)}^2
\end{align}
which implies uniqueness in the space
$H^{k,\bar k}_{x,v}(\T^d \times \R^d)$ and concludes the proof. 
\end{proof}

\subsection{From local-in-time to global-in-time}
\label{sec:from-local-time}

To continue the solutions for all times it is enough to prove that the
$H^{k,\bar k}_{x,v}(\T^d \times \R^d)$ norm remains finite over
arbitrarily long times. Thanks to the local-in-time existence result
in Theorem~\ref{t:loc-exis}, we know that solutions exist at least on
a time interval $[0,2 \tau]$ for some $\tau>0$. We now show that the
Sobolev norm cannot explode after time $\tau$ by using
Proposition~\ref{prop:local}.

Consider again the a priori estimate~\eqref{eq:evol-global},
\begin{align*}
  \dt \frac12 \left\| g_{n+1}(t,\cdot,\cdot) \right\|_{H^{k,\bar
  k}_{x,v}(\T^d \times \R^d)} ^2  
  & \lesssim_{k,C_1,C_2} \left\| g_{n+1}(t,\cdot,\cdot)
    \right\|_{H^{\bar k,\bar k}_{x,v}(\T^d \times  \R^d)}^2 \\
  & + \left( \int_{\R^d} \left\| h_{n+1}(t,\cdot,v)
    \right\|_{L^\infty_x(\T^d)} ^2 \dd v \right)
     \left\| g_n(t,\cdot,\cdot) \right\|_{H^{k,0}_{x,v}(\T^d \times \R^d)}^2.
\end{align*}

Recall that $h_{n+1} = \nabla_v g_{n+1} + \frac{v}{2} g_{n+1}$
and use Lemma~\ref{l:gaussian}:
\begin{equation*}
  \left( \int_{\R^d} \left\| h_{n+1}(t,\cdot,v)
    \right\|_{L^\infty_x(\T^d)} ^2 \dd v \right)
 \lesssim \left( \int_{\R^d} \left\| \nabla_v g_{n+1}(t,\cdot,v)
   \right\|_{L^\infty_x(\T^d)} ^2 \dd v \right) + C_2.
\end{equation*}
Apply finally Proposition~\ref{prop:local}:
\begin{equation*}
  \left( \int_{\R^d} \left\|\nabla_v g_{n+1}(t,\cdot,v) \right\|_{L^\infty_x(\T^d)} ^2 \dd v
 \right) \lesssim \| \mu^{-\delta} \nabla_v g_{n+1} \|^2_{L^\infty([\tau,T]\times
   \T^d \times \R^d)} \lesssim  1. 
\end{equation*}

It shows for $t \ge \tau$, i.e. after some arbitrarily small time
$\tau>0$:
\begin{align*}
  \dt \frac12 \left\| g_{n+1}(t,\cdot,\cdot)  \right\|_{H^{k,\bar
  k}_{x,v}(\T^d \times \R^d)}^2
  \lesssim_{k,C_1,C_2,\tau} \left\| g_{n+1}(t,\cdot,\cdot)
  \right\|_{H^{k,\bar k}_{x,v}(\T^d \times \R^d)} ^2 +
  \left\| g_n(t,\cdot,\cdot)  \right\|_{H^{k,\bar k}_{x,v}(\T^d
  \times \R^d)}^2.
\end{align*}
This proves that the norm remains finite by the Gronwall lemma and
concludes the proof.

\bibliographystyle{acm}
\bibliography{toy}
\end{document}